\newtheorem{lemma}{Lemma}[section]
\newtheorem{proposition}{Proposition}[section]
\newtheorem{theorem}{Theorem}[section]
\newtheorem{corollary}{Corollary}[section]
\newtheorem{definition}{Definition}[section]
\newtheorem{example}{Example}[section]
\def\section{\@startsection{section}{1}%
\z@{1\linespacing\@plus\linespacing}{1\linespacing}%
{\bf\centering}}
\def\subsection{\@startsection{subsection}{0}%
\z@{\linespacing\@plus\linespacing}{\linespacing}%
{\bf}}
\DeclareMathOperator{\spec}{spec}
\DeclareMathOperator{\Id}{Id}
\newcommand{\cD}{\mathcal{D}}
\newcommand{\cA}{\mathcal{A}}
\newcommand{\cL}{\mathcal{L}}
\newcommand{\cK}{\mathcal{K}}
\newcommand{\cB}{\mathcal{B}}
\newcommand{\cC}{\mathcal{C}}
\newcommand{\cX}{\mathcal{X}}
\newcommand{\cV}{\mathcal{V}}
\newcommand{\cM}{\mathcal{M}}
\newcommand{\R}{\mathbb{R}}
\newcommand{\Z}{\mathbb{Z_+}}
\newcommand{\1}{\mathbf{1}}
\begin{document}
\title[Density of states on nested fractals]
{Density of states for the Anderson model on nested fractals}

\author{Hubert Balsam, Kamil Kaleta, Mariusz Olszewski and Katarzyna Pietruska-Pa{\l}uba}
\address{H. Balsam and K. Pietruska-Pa{\l}uba \\ Institute of Mathematics \\ University of Warsaw
\\ ul. Banacha 2, 02-097 Warszawa, Poland}
\email{hubert.balsam@gmail.com, kpp@mimuw.edu.pl}

\address{K. Kaleta and M. Olszewski \\ Faculty of Pure and Applied Mathematics, Wroc{\l}aw University of Science and Technology, Wyb. Wyspia\'nskiego 27, 50-370 Wroc{\l}aw, Poland}
\email{kamil.kaleta@pwr.edu.pl, mariusz.olszewski@pwr.edu.pl}

\begin{abstract}
{We prove the existence and establish the Lifschitz singularity of the integrated density of states for certain random Hamiltonians $H^\omega=H_0+V^\omega$ on fractal spaces of infinite diameter. The kinetic term  $H_0$ is given by $\phi(-\mathcal L),$ where $\mathcal L$ is the Laplacian on the fractal and $\phi$ is a completely monotone function satisfying some  mild regularity conditions.  The random potential $V^\omega$ is of alloy-type.  }

\bigskip
\noindent
\emph{Key-words}: integrated density of states, alloy-type potential, subordinate Brownian motion, nested fractal, good labelling property, reflected process, Neumann boundary conditions.

\bigskip
\noindent
2010 {\it MS Classification}: Primary: 82B44, 28A80, 60K37; Secondary: 47D08, 60J45, 60J57.
\end{abstract}

\footnotetext{Research was supported in parts by the National Science Centre, Poland, grants no.\ 2015/17/B/ST1/01233 and 2019/35/B/ST1/02421.}

\maketitle

\baselineskip 0.5 cm

\bigskip\bigskip

\section{Introduction}

Let $\cK^{\langle \infty \rangle}$ be a planar unbounded simple nested fractal (USNF in short) with the Good Labeling Property (GLP in short) and let $\cL$ be the associated Laplacian, i.e.\ the generator of the heat semigroup on $L^2(\cK^{\langle \infty \rangle}, \mu)$ (or, equivalently, of the Brownian motion with values in $\cK^{\langle \infty \rangle}$). Here $\mu$ denotes the normalized Hausdorff measure on the fractal. We propose and study the random Anderson-type Schr\"odinger operator
\[
H^{\omega} = H_0 + V^{\omega}, \quad \text{acting in} \quad L^2(\cK^{\langle \infty \rangle}, \mu),
\]
where the kinetic term $H_0$  takes the form
\[
H_0 = \phi(-\cL),
\]
for a sufficiently regular operator monotone function $\phi$, and $V^{\omega}$ is the operator of multiplication by a function that we call a fractal alloy-type potential; it is built as follows:
\[
V^{\omega}(x) := \sum_{v \in \cV_0^{\langle \infty \rangle}} \xi_v(\omega) \cdot W(x,v), \quad x \in \cK^{\langle \infty \rangle},
\]
where $\cV_0^{\langle \infty \rangle}$ denotes the set of vertices of the self-similar fractal lattice spanning $\cK^{\langle \infty \rangle}$,
$W$ is a non-negative single site (two-argument) potential on $\cK^{\langle \infty \rangle} \times \cV_0^{\langle \infty \rangle}$, and $\{\xi_v: v\in \cV_0^{\langle \infty \rangle}\}$ are nonnegative and nondegenerate i.i.d.\ random variables over
a given probability space $\left(\Omega, \cM, \mathbb{Q}\right)$.

The main goal of this paper is to establish  the existence and then to study asymptotic properties of the \emph{integrated density of states} (IDS) of the operator $H^{\omega}$. In the setting of planar USNF's with the GLP, we give general sufficient conditions on $\phi$, $W$, and $\xi_v$'s under which the IDS exists and exhibits the Lifschitz-type behaviour at the bottom of the spectrum of $H^{\omega}$.

The random Schr\"odinger operator $H^{\omega}$ can be interpreted as the Hamiltonian in the mathematical model of  certain quantum system -- namely the motion of a single particle (an electron) in a perfectly ordered material (say crystal) with impurities and defects (the so-called disordered medium). This material is modelled by the nested fractal -- the  ions are located at the vertices of the fractal lattice $\cV_0^{\langle \infty \rangle}$, and the disorder is modelled by the alloy-type potential $V^{\omega}$. The model we propose is a fractal counterpart of the classical \emph{Anderson model} in which the ions were placed at the integer lattice $\mathbb{Z}^d$ (see \cite{bib:Stoll} and references therein). To the best of our knowledge models based on such fractal alloy-type potentials have not been studied yet. We remark that the operator $\phi(-\cL)$ describes the kinetic energy of the free particle. In this paper we consider the functions $\phi(\lambda)$ that are comparable to $\lambda^{\alpha/d_w}$ at zero (here $\alpha \in (0,d_w]$, where $d_w$ is the walk dimension of $\cK^{\langle \infty \rangle}$), and grow sufficiently fast at infinity. For precise statement see assumption {\bf (B)} in Section \ref{sec:subord}. Our framework covers the \emph{non-relativistic} models with $\phi(\lambda)=\lambda$ (i.e. Hamiltonians  based on the Laplacian) as well as the \emph{relativistic-type} models with $\phi(\lambda) = (\lambda+m^{2/\alpha})^{\alpha/2}-m$, $\alpha \in (0,2)$, $m \geq 0$ (i.e.\ based on non-local kinetic term operators).

We now summarize  main contributions of the paper.

\smallskip
\noindent
(1) \emph{Existence of the IDS.} We give fairly general sufficient conditions on random variables $\xi_v$ and the single site potential $W$
(for detailed statements see assumptions {\bf (Q1)} and {\bf (W1)}-{\bf (W3)} in Section \ref{sec:Ratp_and_FKS}) under which the IDS of the operator $H^{\omega}$ (denoted by $\mathcal N$) exists. It is obtained as the vague limit of the normalized measures counting the eigenvalues of the corresponding Dirichlet and Neumann random Schr\"odinger operators acting on $L^2(\mathcal{K}^{\left\langle M\right\rangle},\mu)$, where $\mathcal{K}^{\left\langle M\right\rangle}$ are bounded fractals which approximate $\cK^{\langle \infty \rangle}$ as $M \to \infty$. This result is stated as Theorem \ref{thm:IDS}.

\smallskip
\noindent
(2) \emph{Lifschitz tail of the IDS.} Under  additional assumptions on the distribution function $F_{\xi}$ of the random variables $\xi_v$ and on the single site potential $W$ (see {\bf (Q2)}, {\bf (W4)} and {\bf (W5)} in Section \ref{sec:LT}; in particular, we assume that $W$ is of finite range), we describe the asymptotic behaviour of $\mathcal N(\lambda):=\mathcal N[0,\lambda]$ as $\lambda \to 0^+$. More precisely, our main result, Theorem \ref{th:Lifshitz-IDS}, states that there are positive constants $K,\widetilde K, R, \widetilde R$
such that
\[
-K\leq \liminf_{\lambda\searrow 0}\frac{\lambda^{ \frac{d}{\alpha}}\log \mathcal N(\lambda)}{g(R/\lambda)} \quad \text{and} \quad
\limsup_{\lambda\searrow 0}\frac{\lambda^{ \frac{d}{\alpha}}\log \mathcal N(\lambda)}{g(\widetilde R/\lambda)}\;\leq \; -\widetilde K,
\]
where $g(r) := -\log F_{\xi}(D_0/r)$, for a certain constant $D_0>0$,  $F_\xi$ being the cumulative distribution functions of the $\xi_v$'s.  Clearly, when $\xi_v$'s have an atom at zero, then $\lim_{\lambda\searrow 0} g(D_0/\lambda)$ exists and is finite. Consequently, the rates above simplify  in that case. 

\smallskip

The Lifschitz singularity of the IDS of a random Schr\"{o}dinger is a strong indication that the system exhibits the so-called spectral localisation (see e.g.\ Combes and Hislop \cite{bib:Com-His}, Bourgain and Kenig \cite{bib:Bou-Ken}, Germinet, Hislop and Klein \cite{bib:Ger-His-Kle}, and their references). Rigorous proofs of the localisation property often rely on the approximation of the IDS via the Lifschitz tail (see e.g. papers by Klopp \cite{bib:Klopp1,bib:Klopp2} and Kirsch-Veseli\'{c} \cite{bib:KV}). In this context, it is worth mentioning that the Lifschitz tail in the classical continuous setting (on $\mathbb R^d)$ has been established for various types of random potentials (cf.\
Benderskii and Pastur \cite{bib:BP}, Friedberg and Luttinger \cite{bib:FL}, Luttinger \cite{bib:Lut}, Nakao \cite{bib:Nak}, Pastur \cite{bib:Pas}, Kirsch and Martinelli \cite{bib:KM1, bib:KM2}, Mezincescu \cite{bib:Mez}, Kirsch and Simon \cite{bib:KS}, Kirsch and Veseli\'c \cite{bib:KV}) and in the  discrete setting (on $\mathbb Z^d$) e.g.\ by Fukushima \cite{bib:F}, Fukushima, Nagai and Nakao \cite{bib:FNN}, Nagai \cite{bib:Nag}, Romerio and Wreszinski \cite{bib:RW}, Simon \cite{bib:Sim}. The kinetic term $H_0$ in those papers is the Laplace operator - continuous or discrete.
In this regard, nonlocal kinetic terms on $\R^d$ and $\mathbb{Z}^d$ were considered by Okura \cite{bib:Okura}, Kaleta and Pietruska-Pa{\l}uba \cite{bib:KaPP4,bib:KaPP3}, and Gebert and Rojas-Molina \cite{bib:GRM}.


\smallskip

On fractals, the problem in question has been studied  in a very restricted scope.  Pietruska-Pa{\l}uba \cite{bib:KPP-PTRF} established the existence and the Lifschitz tail of the IDS for the Brownian motion evolving among Poissonian killing obstacles (which corresponds to the Laplacian $\cL$ with Dirichlet conditions on the set of obstacles) on the planar Sierpi\'nski gasket.  Shima \cite{bib:Sh} extended this result to general nested fractals and to random Schr\"odinger operators based on $\cL$ with Poissonian random fields whose two-argument shape functions is of finite range. More recently,  Kaleta and Pietruska-Pa{\l}uba \cite{bib:KaPP2} have  studied the existence of the IDS for random Schr\"odinger operators $\phi(-\cL)+V^{\omega}$, for a large class of operator monotone functions $\phi$ and Poissonian potentials with fairly general shape functions, possibly of infinite range, on the unbounded planar Sierpi\'nski gasket. That model corresponds to subordinate Brownian motions evolving in presence of the Poissonian potential; at the level of operators it allows for both local and non-local models. The Lifschitz singularity of the IDS was obtained in \cite{bib:KaPP}, but the approach of that paper required some restrictions on the behaviour of $\phi$ at zero:\ it excluded non-local models with $\phi(\lambda) \approx \lambda$ as $\lambda \searrow 0$, e.g.\ the relativistic ones with the mass $m>0$.

Our present paper extends the above results  in at least two different directions, leading to a  substantial   progress in the field. First of all, all the above investigations were restricted to  Poissonian random media, while now we treat the existence and the asymptotics of the IDS for the fractal-alloy type potentials on fairly general planar nested fractals. Such a model is new even in the case of the Sierpi\'nski gasket. Moreover, we cover an essentially wider class of operator monotone functions of the Laplacian $\phi(-\cL)$, including relativistic kinetic terms. We mention that the results in Theorem \ref{th:Lifshitz-IDS} constitute  a fractal counterpart of \cite[Theorem 1.1]{bib:KaPP3}.

Analysis on fractals  is still a vivid branch of mathematics,  see e.g.\ the paper of Kigami \cite{bib:Kig4}, and the series of papers of Alonso-Ruiz et al.\ \cite{bib:ARBCRST1,bib:ARBCRST2,bib:ARBCRST3,bib:ARBCRST4}), concerned in particular with function spaces on fractals.  Also, fractal-type spaces and sets with fractal boundaries became more and more popular in modeling  -- e.g.\ in the theory of PDE's (Chen et al.\ in \cite{bib:CHT} and Dekkers et al. \cite{bib:DRT}), information theory  (Akkermans et al.\ \cite{bib:ACDRT}), magnetostatics problems (Hinz and Teplyaev \cite{bib:HT}), acoustics theory (Hinz et al.\ \cite{bib:HRT}), theory of random walks on these sets (e.g.\  Kumagai and  Nakamura \cite{bib:KN}). See also the paper of Kumagai \cite{bib:Kum7} for an overview on diffusions on disordered  media.
These are just some examples of possible applications of theory of fractals, chosen from the very rich  recent literature of the subject.

Let us now say a few words about the proofs of our main results. In the case of homogeneous spaces, which are translation invariant, say $\R^d$, and alloy-type potentials which are based on the integer lattice $\mathbb{Z}^d$, the existence and the asymptotics of the IDS can be studied by using the classical notions of stationarity and ergodicity of such random fields (see e.g.\ \cite[Theorem VI.1.1 and Remark VI.1.2]{bib:Car-Lac}). For less regular  spaces such as fractals, this problem becomes more difficult and requires a specialized approach. Our proof of Theorem \ref{thm:IDS} follows a general approach from \cite{bib:KaPP2} which is a modification of the argument that was originally performed for the Brownian motion evolving among Poisson obstacles on hyperbolic spaces \cite{bib:Szn2,bib:Szn3} and on the Sierpi\'nski triangle \cite{bib:KPP-PTRF}. Roughly speaking, such an approach is based on an approximation of the operator $H^{\omega}$ acting in $L^2(\cK^{\langle \infty\rangle}, \mu)$ by the operators defined for $L^2(\cK^{\langle M \rangle}, \mu)$ as $M \nearrow \infty$. The key step is concerned with the existence of a system of consistent labels of the vertices, which means that the fractal possesses a sufficiently regular periodic structure. The method of \cite{bib:KaPP2} works well for a wide class of subordinate Brownian motions evolving in presence of a fairly general Poisson random field, but it was implemented for the Sierpi\'nski triangle only. That fractal is one of the most regular,  planar USNF's; at that point, an extension to fractals with richer geometry was an open problem. The proof of Theorem \ref{th:Lifshitz-IDS} follows the ideas of \cite{bib:KaPP3} -- it adapts the technique from $\R^d$ to the nested fractal setting. The method has a functional analytic nature; one of key steps is based on an application of the Temple inequality -- the idea to use it in the context of the alloy-type random media comes from the classical papers of Simon \cite{bib:Sim}, and Kirsch and Simon \cite{bib:KS}. In order to apply this tool, one needs to reduce the problem to study the Schr\"odinger operators with periodic potentials \cite{bib:KS} or, equivalently, to project the initial operators on tori \cite{bib:KaPP3}. What is really crucial there from our point of view is that the Euclidean spaces have a natural periodic structure which allows one to make such a reduction.

Therefore, we see that the possibility of implementation of the ideas described above for more general nested fractals  relies   on their specific in-built regularity,  which can be described in short as the existence of a regular enough \emph{periodic structure}. For planar USNF's this problem has been solved just recently by Kaleta, Olszewski and Pietruska-Pa\l uba \cite{bib:KOP} who introduced the notion of the \emph{Good Labeling Property} and described the class of fractals having such a regularity (see also Nieradko and Olszewski \cite{bib:NO}). That allowed the authors to define the folding projection $\pi_M: \cK^{\langle \infty\rangle} \to \cK^{\langle M \rangle}$, $M \in \Z$, and construct the sequence of conservative diffusions on bounded fractals $\cK^{\langle M \rangle}$ with precisely established relations between the processes on consecutive levels. We want to emphasize that these tools are critical for the proof of Theorems \ref{thm:IDS} and \ref{th:Lifshitz-IDS}.

The organization of the material in the paper is as follows.
In Section 2 we have collected the preliminaries needed in subsequent sections -- essentials on  fractals, stochastic processes on fractal sets and their generators, random Schr\"{o}dinger operators. In particular in Proposition \ref{prop:kato} we prove that the random potentials we consider belong to Kato classes of respective processes, which allow to correctly express the random semigroups by the Feynman--Kac formula. The proof of the existence of the IDS is given in Section 3. Some of the technical lemmas were moved to the Appendix. Section 4 contains the proof of the Lifschitz singularity. We work with the Laplace transform of the IDS; we prove separately the upper (Theorem \ref{th:main-alloy-type-fractals}) and lower bounds (Theorem \ref{th:main-2}), which are then transformed into estimates for the IDS at zero (Theorem \ref{th:Lifshitz-IDS}). The tools that are crucial for development in Section~4 (Temple inequality, Bernstein-type estimates, Tauberian theorems) are formulated as needed.

\section{Preliminaries}

\subsection{Unbounded simple nested fractals with Good Labeling Property} \label{sec:usnf}

In this section we recall some basic definitions concerning nested fractals from \cite{bib:Lin,bib:Kum,bib:kpp-sausage,bib:kpp-sto}, and labeling properties from \cite{bib:KOP}. Consider a collection of similitudes $\Psi_i : \mathbb{R}^2 \to \mathbb{R}^2$ with a common scaling factor $L>1,$  and a common isometry part $U,$ i.e. \ $\Psi_i(x) = (1/L) U(x) + \nu_i,$  where  $\nu_i \in \mathbb{R}^2$, $i \in \{1, ..., N\}.$ We shall assume $\nu_1 = 0$.
There exists a unique nonempty compact set $\mathcal{K}^{\left\langle 0\right\rangle}$ (called {\em the  fractal generated by the system} $(\Psi_i)_{i=1}^N$) such that $\mathcal{K}^{\left\langle 0\right\rangle} = \bigcup_{i=1}^{N} \Psi_i\left(\mathcal{K}^{\left\langle 0\right\rangle}\right)$.  As $L>1$, each similitude has exactly one fixed point so there are  $N$ fixed points of the transformations $\Psi_1, ..., \Psi_N$ (not necessarily distinct).  Let $F$ be the collection of those fixed points.

A fixed point $x \in F$ is an essential fixed point if there exists another fixed point $y \in F$ and two different similitudes $\Psi_i$, $\Psi_j$ such that $\Psi_i(x)=\Psi_j(y)$.
The set of all essential fixed points for transformations $\Psi_1, ..., \Psi_N$ is denoted by $\cV_{0}^{\left\langle 0\right\rangle}$. The set $\mathcal{K}^{\left\langle 0\right\rangle}$ is called a \emph{nested fractal} if the following conditions are met.
\begin{enumerate}
\item $k:=\# \cV_{0}^{\left\langle 0\right\rangle} \geq 2.$
\item (Open Set Condition) There exists an open set $U \subset \mathbb{R}^2$ such that for $i\neq j$ one has\linebreak $\Psi_i (U) \cap \Psi_j (U)= \emptyset$ and $\bigcup_{i=1}^N \Psi_i (U) \subseteq U$.
\item (Nesting) $\Psi_i\left(\mathcal{K}^{\left\langle 0 \right\rangle}\right) \cap \Psi_j \left(\mathcal{K}^{\left\langle 0 \right\rangle}\right) = \Psi_i \left(\cV_{0}^{\left\langle 0\right\rangle}\right) \cap \Psi_j \left(\cV_{0}^{\left\langle 0\right\rangle}\right)$ for $i \neq j$.
\item (Symmetry) For $x,y \in \cV_{0}^{\left\langle 0\right\rangle},$ let $S_{x,y}$ denote the symmetry with respect to the line bisecting the segment $\left[x,y\right]$. Then
\begin{equation*}
\forall i \in \{1,...,M\} \ \forall x,y \in \cV_{0}^{\left\langle 0\right\rangle} \ \exists j \in \{1,...,M\} \ S_{x,y} \left( \Psi_i \left(\cV_{0}^{\left\langle 0\right\rangle} \right) \right) = \Psi_j \left(\cV_{0}^{\left\langle 0\right\rangle} \right).
\end{equation*}
\item (Connectivity) On the set $\cV_{-1}^{\left\langle 0\right\rangle}:= \bigcup_i \Psi_i \left(\cV_{0}^{\left\langle 0\right\rangle}\right)$ we define graph structure $E_{-1}$ as follows:\\
$(x,y) \in E_{-1}$ if and only if $x, y \in \Psi_i\left(\mathcal{K}^{\left\langle 0 \right\rangle}\right)$ for some $i$.
Then the graph $(\cV_{-1}^{\left\langle 0\right\rangle},E_{-1} )$ is required to be connected.
\end{enumerate}

As
for $k = 2$ the set $\mathcal{K}^{\left\langle 0\right\rangle}$ is just a line segment between two essential fixed points, we
require $k\geq 3$. In this case the points from $\cV_{0}^{\left\langle 0\right\rangle}$ are the vertices of a regular polygon \cite[Proposition 2.1]{bib:KOP}.

The remaining notions are collected in a single definition.
\begin{definition} Let $M\in\mathbb{Z}.$
\begin{itemize}
\item[(1)]
\begin{equation} \label{eq:Kn}
\mathcal{K}^{\left\langle M\right\rangle} = L^M \mathcal{K}^{\left\langle 0\right\rangle}
  = \Psi^{-M}_1(\mathcal K^{\langle 0\rangle}),
\end{equation}
 is the fractal of size $L^M$
(the $M$-complex  attached to $(0,0)$, see \eqref{eq:Mcompl} below).

\item[(2)]
\begin{equation} \label{eq:Kinfty}
\mathcal{K}^{\left\langle \infty \right\rangle} = \bigcup_{M=0}^{\infty} \mathcal{K}^{\left\langle M\right\rangle}.
\end{equation}
is the unbounded simple nested fractal (USNF).
\item[(3)] $M$-complex: \label{def:Mcomplex}
every set $\Delta_M \subset \mathcal{K}^{\left\langle \infty \right\rangle}$ of the form
\begin{equation} \label{eq:Mcompl}
\Delta_M  = \mathcal{K}^{\left\langle M \right\rangle} + \nu_{\Delta_M},
\end{equation}
where $\nu_{\Delta_M}=\sum_{j=M+1}^{J} L^{j} \nu_{i_j},$ for some $J \geq M+1$, $\nu_{i_j} \in \left\{\nu_1,...,\nu_N\right\}$, is called an \emph{$M$-complex}.
\item[(4)] Vertices of the $M$-complex \eqref{eq:Mcompl}: the set $\cV\left(\Delta_M\right) =L^M\cV_0^{\langle 0 \rangle}+\nu_{\Delta_M}= L^{M} \mathcal V^{\left\langle 0 \right\rangle}_0 + \sum_{j=M+1}^{J} L^{j} \nu_{i_j}$.
\item[(5)] Vertices of $\mathcal{K}^{\left\langle M \right\rangle}$:
$$
\mathcal V^{\left\langle M\right\rangle}_{M} = \cV\left(\mathcal{K}^{\left\langle M \right\rangle}\right) = L^M \mathcal V^{\left\langle 0\right\rangle}_{0}.
$$
\item[(6)] Vertices of all $M$-complexes inside an $(M+m)$-complex (defined recursively for $m>0$):
$$
\cV_M^{\langle M+m\rangle}= \bigcup_{i=1}^{N} \cV_M^{\langle M+m-1\rangle} + L^{M+m} \nu_i.
$$
\item[(7)] Vertices of all 0-complexes inside the unbounded nested fractal:
$$
\mathcal V^{\left\langle \infty \right\rangle}_{0} = \bigcup_{M=0}^{\infty} \mathcal V^{\left\langle M\right\rangle}_{0}.
$$
\item[(8)] Vertices of $M$-complexes from the unbounded fractal:
$$
\mathcal V^{\left\langle \infty \right\rangle}_{M} = L^{M} \mathcal V^{\left\langle \infty \right\rangle}_{0}.
$$
\item[(9)]$\cC_M(x)$:  the union of all $M-$complexes containing $x$:\\
-- the unique $M-$complex when $x\notin \cV_M^{\langle \infty \rangle},$\\
-- the union of all $M-$complexes intersecting at $x$ when $x\in \cV_M^{\langle \infty\rangle}.$ Their number is denoted by ${\rm rank}\,(x)$.
\item[(10)] $M$-complexes in $\cK^{\langle M+1 \rangle}$:
$$
\Delta_{M,i} = \cK^{\langle M \rangle} + L^M \nu_i, \quad i= 1,...,N.
$$
\end{itemize}
\end{definition}

The fractal (Hausdorff) dimension of $\mathcal{K}^{\left\langle \infty \right\rangle}$ is equal to  $d=\frac{\log N}{\log L}$. The Hausdorff  measure in  dimension $d$ with support $\mathcal K^{\langle \infty\rangle}$ will be denoted by $\mu$. It will be normalized so as to have $\mu\left(\mathcal{K}^{\left\langle 0\right\rangle}\right)=1$. It serves as a `uniform' measure on $\mathcal{K}^{\left\langle \infty \right\rangle}.$

\begin{definition}\label{def:graph-distance}
For $M \in \mathbb Z$ and $x,y \in \mathcal{K}^{\left\langle \infty \right\rangle}$ let
\begin{equation}
\label{eq:graphmetric}
d_M (x,y):= \left\{ \begin{array}{ll}
0, & \textrm{if } x=y ;\\[1mm]
1, & \textrm{if there exists } \Delta_M \in \mathcal{T}_M \textrm{ such that } x,y \in \Delta_M \textrm{ and } x \neq y ;\\[1mm]
n>1, & \textrm{if there does not exist } \Delta_M \in \mathcal{T}_M \textrm{ such that } x,y \in \Delta_M \textrm{ and } n \textrm{ is the smallest}\\
&  \textrm{number for which there exists a chain } \Delta_M^{(1)}, \Delta_M^{(2)}, ..., \Delta_M^{(n)} \in \mathcal{T}_M \textrm{ such that } x \in \Delta_M^{(1)},\\
&  y \in \Delta_M^{(n)} \textrm{ and } \Delta_M^{(i)} \cap \Delta_M^{(i+1)} \neq \emptyset \textrm{ for  }1 \leq i \leq n-1.
\end{array} \right.
\end{equation}
\end{definition}
The set $\mathcal C_M(x)$ defined above is just the ball in $d_M$ metric with center $x$ and radius $1$.

Let
\begin{equation}\label{eq:def-r0}
r_0:=\max\,\mbox{rank}(v),\qquad v\in \cV_0^{\langle\infty\rangle}.
\end{equation}
For $k\geq 4$ there is $r_0 =2$. For $k=3$ we have $r_0 \in \{2,3\}$. In case of Sierpi\'nski gasket $r_0 =2$, but in Example 3.2 in \cite{bib:KOP} there is $r_0=3$.

We also need the estimate on the number of points from the $0-$grid inside $\mathcal K^{\langle M\rangle},$ for $M\in\mathbb Z_+,$ i.e. the cardinality of $\cV_0^{\langle M\rangle}.$
Denote this number by $k_0^{\langle M\rangle}.$
We readily see that there exists a constant $C_0>1$ such that
\begin{equation}\label{eq:number-points}
L^{ Md}\leq \# \mathcal V^{\langle M\rangle}_0 = k_0^{\langle M\rangle} \leq C_0 L^{ Md}.
\end{equation}

We now introduce the concept of good labeling of vertices.

Consider the alphabet of $k$ symbols $\cA:=\left\{a_1, a_2,a_3,...,a_k\right\}$,  where  $ k=\# \mathcal V^{\left\langle 0\right\rangle}_{0}\geq 3.$ The elements of $\cA$ will be called labels.  A \emph{labelling function of order $M \in \mathbb{Z}$} is any map $l_M: \mathcal V^{\left\langle \infty \right\rangle}_{M} \to \cA.$ Recall that the vertices of every $M$-complex $\Delta_M$ are the vertices of a regular polygon with $k$ vertices. In particular, there exist exactly $k$ different rotations around the barycenter of $\mathcal K^{\langle M \rangle}$,  mapping $\mathcal V^{\left\langle M \right\rangle}_{M}$ onto $\mathcal V^{\left\langle M \right\rangle}_{M}.$ They will be denoted by  $\{R_1, ..., R_k\}=: \mathcal{R}_M $ (the rotations are ordered in such a way that for $i=1,2,...,k,$ the rotation $R_i$ rotates by angle $\frac{2\pi i}{k})$.

\begin{definition}[\textbf{Good labelling function of order $M$}]
\label{def:glp} Let $M \in \mathbb{Z}$.  A function $\ell_M: \mathcal V^{\left\langle \infty \right\rangle}_{M} \to \cA$  is called a \emph{good labelling function of order $M$} if the following conditions are met.
\begin{itemize}
\item[(1)] The restriction of $\ell_M$ to $\mathcal V^{\left\langle M \right\rangle}_{M}$ is a bijection onto $\cA$.
\item[(2)] For every $M$-complex $\Delta_M$ represented as
$$
\Delta_M  = \mathcal{K}^{\left\langle M \right\rangle} +\nu_{\Delta_M},
$$
where $\nu_{\Delta_M}=  \sum_{j=M+1}^{J} L^{j} \nu_{i_j},$  with some $J \geq M+1$ and $\nu_{i_j} \in \left\{\nu_1,...,\nu_N\right\}$ (cf. Def. \ref{def:Mcomplex}), there exists a rotation $R_{\Delta_M} \in \mathcal{R}_M$ such that
\begin{align}\label{eq:rot}
\ell_M(v)=\ell_M\left(R_{\Delta_M}\left(v -\nu_{\Delta_M}\right)\right) , \quad v \in \cV\left(\Delta_M\right).
\end{align}
\end{itemize}
\end{definition}
 An USNF $\mathcal{K}^{\left\langle \infty \right\rangle}$ is said to have the  \emph{good labelling property of order $M$} if a good labelling function of order $M$ exists.

Thanks to the selfsimilar structure of $\mathcal{K}^{\left\langle \infty \right\rangle},$ the \emph{good labelling property of order $M$} for some $M \in \mathbb{Z}$ is equivalent to this property of any other order $M'  \in \mathbb{Z}$. This gives rise to the following general definition.

\begin{definition}[\textbf{Good labelling property}] \label{def:glp_gen} An USNF $\mathcal{K}^{\left\langle \infty \right\rangle}$ is said to have the \emph{good labelling property (GLP in short)} if it has the good labelling property of order $M$ for some $M \in \mathbb{Z}$.
\end{definition}

In other words, the fractal $\mathcal{K}^{\left\langle \infty \right\rangle}$ has the GLP iff the vertices from $\mathcal V^{\left\langle \infty \right\rangle}_{M}$ can be labeled in such way that each $M$-complex has the complete set of labels attached to its vertices and the order of label is preserved between $M$-complexes.

It is known that in the case of $k$  being  prime or $k=2^n$ the fractal has the GLP. For other values of $k$ one can construct examples having GLP and different examples not having that property -- it depends on the structure of $0$-complexes inside $1$-complexes.  For a detailed analysis of GLP, including the geometrical characterization of that property, see \cite{bib:KOP, bib:NO}.

\medskip

For an unbounded fractal $\mathcal{K}^{\left\langle \infty \right\rangle}$ having the GLP, we define a projection map $\pi_{M}$ from $\mathcal{K}^{\left\langle \infty \right\rangle}$  onto the primary $M$-complex $\mathcal{K}^{\left\langle M \right\rangle}$ by the formula
\begin{equation}
\label{eq:piem}
\pi_M(x) := R_{\Delta_M}\left(x -\nu_{\Delta_M}\right),\quad x\in \mathcal K^{\langle \infty\rangle},
\end{equation}
where $\Delta_M = \mathcal{K}^{\left\langle M \right\rangle} + \nu_{\Delta_M} = \mathcal{K}^{\left\langle M \right\rangle} + \sum_{j=M+1}^{J} L^{j} \nu_{i_j}$ is an $M$-complex containing $x$ and $R_{\Delta_M}\in\mathcal{R}_M$ is the unique rotation determined by \eqref{eq:rot}. More precisely,
\begin{itemize}
\item[(1)] if $x \notin \cV_M^{\langle \infty\rangle}$, then we take $\Delta_M = \mathcal{C}_M(x)$ (i.e.\ $\Delta_M$ is the unique $M$-complex containing $x$),
\item[(2)] if $x \in \cV_M^{\langle \infty\rangle}$, then $\Delta_M$ can be chosen as any of the $M$-complexes meeting at $x$  (thanks to the GLP the image does not depend on a particular choice of an $M$-complex containing $x$).
\end{itemize}
\noindent

This projection restricted to any $M$-complex $\Delta_M$ is a bijection, therefore the inverse of this restriction, $\left(\pi_M\mid_{\Delta_M}\right)^{-1}=:\widetilde{\pi}_{\Delta_M},$  is well defined and given by the formula
\begin{equation*}
\widetilde{\pi}_{\Delta_M}(x) = R_{\Delta_M}^{-1}(x) + \nu_{\Delta_M},\quad x\in \mathcal K^{\langle M\rangle}
\end{equation*}
where $\Delta_M = \mathcal{K}^{\left\langle M \right\rangle} + \nu_{\Delta_M} = \mathcal{K}^{\left\langle M \right\rangle} + \sum_{j=M+1}^{J} L^{j} \nu_{i_j}$.

We can also project onto any other arbitrarily chosen $M$-complex $\Delta_M$. We define \linebreak$\pi_{\Delta_M}:\mathcal{K}^{\left\langle \infty \right\rangle}\to\Delta_M$ by setting
\begin{equation}
\label{eq:uprojection}
\pi_{\Delta_M}(x) = \widetilde{\pi}_{\Delta_M}\left(\pi_{M}(x)\right).
\end{equation}
Clearly, $\pi_{\mathcal{K}^{\left\langle M \right\rangle}} = \pi_{M}$, because $\widetilde{\pi}_{\mathcal{K}^{\left\langle M \right\rangle}} = \Id$.

\subsection{Brownian motion, subordinate Brownian motions and corresponding reflected processes} \label{sec:subordinate}

\subsubsection{Brownian motion on USNF}

Let $Z=(Z_t, \mathbf{P}^{x})_{t \geq 0, \, x \in \mathcal{K}^{\left\langle \infty \right\rangle}}$ be \emph{the Brownian motion} on the USNF $\mathcal{K}^{\left\langle \infty \right\rangle}$ \cite{bib:Lin, bib:Kus2}. Such a process has been constructed by means of Dirichlet forms \cite{bib:Fuk2, bib:Kum}. It is a Feller process with continuous trajectories, whose distributions are invariant under local isometries of $\mathcal{K}^{\left\langle \infty \right\rangle}$. The $L^2$-generator of  its transition
semigroup  is the Laplacian on the fractal,  denoted by $\mathcal L$ (without any subscript). The process $Z$ has transition probability densities $g(t,x,y)$ with respect to the  $d$-dimensional  Hausdorff measure $\mu$ on $\mathcal{K}^{\left\langle \infty \right\rangle}$. More precisely, one has
$$
\mathbf{P}^{x}(Z_t \in A) = \int_A g(t,x,y) \mu({\rm d}y), \quad t > 0, \ \ x \in \mathcal{K}^{\left\langle \infty \right\rangle}, \ \ A \in \cB(\mathcal{K}^{\left\langle \infty \right\rangle}).
$$
Densities $g(t,x,y)$ are jointly continuous on $(0,\infty) \times \mathcal{K}^{\left\langle \infty \right\rangle} \times \mathcal{K}^{\left\langle \infty \right\rangle}$ and satisfy the scaling property
$$
g(t,x,y) = L^{d} g(L^{d_w} t, L x, L y), \quad t>0, \ \ x, y \in \mathcal{K}^{\left\langle \infty \right\rangle}.
$$
Moreover, they enjoy the following \emph{sub-Gaussian estimates}:\ there exist absolute constants \linebreak  $C_1^g,...,C_4^g >0$  such that \cite[Theorems 5.2, 5.5]{bib:Kum}
\begin{multline}
\label{eq:kum}
C_{1}^g t^{-d_s/2} \exp \left(-C_{2}^g \left(\frac{\left|x-y \right|^{d_w}}{t} \right)^{\frac{1}{d_J -1}} \right) \leq g(t,x,y) \\
\leq C_{3}^g t^{-d_s/2} \exp \left(-C_{4}^g \left(\frac{\left|x-y \right|^{d_w}}{t} \right)^{\frac{1}{d_J -1}} \right), \quad t>0, \ \ x,y \in \mathcal{K}^{\left\langle \infty \right\rangle},
\end{multline}
where $d_w$  is the walk dimension of $\mathcal{K}^{\left\langle \infty \right\rangle},$  $d_s=2d/d_w$ is its  spectral dimension, and $d_J > 1$ is the so-called \emph{chemical exponent} of $\mathcal{K}^{\left\langle \infty \right\rangle}$. The regularity properties of the densities $g$ and the bounds \eqref{eq:kum} has been established by T. Kumagai in \cite{bib:Kum} for general nested fractals under some assumption which in our setting is always satisfied (see the comments following \cite[Lemma A.2]{bib:KOP}).

The constant $d_J$ has been introduced in the cited paper as a parameter describing the \emph{shortest path scaling} on a given nested fractal  \cite[Section 3]{bib:Kum}.
Typically $d_w \neq d_J$, but it is known that in the case of Sierpi\'nski gasket one has $d_w = d_J$ .

\subsubsection{Construction of the reflected Brownian motion} \label{sec:constr_rbm}

Suppose now that the unbounded fractal $\mathcal K^{\langle\infty\rangle}$ has the GLP. For an arbitrary $M\in\mathbb Z$ the reflected Brownian motion on $\mathcal{K}^{\left\langle M\right\rangle}$ is defined canonically by
\begin{equation}\label{eq:process-def}
Z_t^M = \pi_M(Z_t),
\end{equation}
where $\pi_M: \mathcal{K}^{\left\langle \infty  \right\rangle} \to \mathcal{K}^{\left\langle M\right\rangle}$ is the projection from \eqref{eq:piem}.

Formally,  this is  the stochastic process $(Z_t^M, \mathbf{P}^{x}_{M})_{t \geq 0, \, x \in \mathcal{K}^{\left\langle M\right\rangle}}$, where the measures $\mathbf{P}^{x}_{M}$, $x \in \mathcal{K}^{\left\langle M\right\rangle}$  (on $C(\mathbb R_+,\mathcal K^{\langle M\rangle})$), are defined as projections of the measures $\mathbf{P}^{x}$, $x \in \mathcal{K}^{\left\langle M\right\rangle}$ (on $C(\mathbb R_+,\mathcal K^{\langle \infty\rangle})$), determining the distribution of the free Brownian motion. The finite dimensional distributions
of $Z^M$ are given by
\begin{align} \label{eq:fdd_reflected}
\mathbf{P}^{x}_{M}(Z^M_{t_1} \in A_1, ..., Z^M_{t_n} \in A_n)= \mathbf{P}^{x}(Z_{t_1} \in \pi_M^{-1}(A_1), ..., Z_{t_n} \in \pi_M^{-1}(A_n)),
\end{align}
for every $n=1,2,...,$ $0 \leq t_1 < t_2 <...< t_n$, $x \in \mathcal{K}^{\left\langle M\right\rangle}$ and $A_1,...,A_n \in \cB(\mathcal{K}^{\left\langle M\right\rangle})$. Note that in fact the projections of the measures $\mathbf{P}^{x}$ (denoted by $\pi_M(\mathbf{P}^{x})$) are well defined for every $x \in \mathcal{K}^{\left\langle \infty \right\rangle}$ and the right hand side of \eqref{eq:fdd_reflected} defines the finite dimensional distributions for such measures in general.

From the definition of the measures $\mathbf{P}^{x}_{M}$, it is clear that  one-dimensional distributions of the process $Z^M$ are absolutely continuous with respect to the Hausdorff measure $\mu$ restricted to the complex $\mathcal{K}^{\left\langle M\right\rangle}$.

The transition probability density function $g_M(t,x,y): (0,\infty) \times \mathcal{K}^{\left\langle \infty \right\rangle} \times \mathcal{K}^{\left\langle M\right\rangle} \to (0,\infty)$ is given by
\begin{equation}
\label{eq:refldens}
g_{M}\left(t,x,y\right)= \left\{ \begin{array}{ll}
\displaystyle\sum_{y'\in \pi_{M}^{-1} (y)}{ g(t,x,y')} & \textrm{if } y \in \mathcal{K}^{\left\langle M\right\rangle} \backslash \cV_{M}^{\left\langle M\right\rangle}, \\
\displaystyle\sum_{y'\in \pi_{M}^{-1} (y)}{g(t,x,y')} \cdot \textrm{rank}(y') & \textrm{if } y \in \cV_{M}^{\left\langle M\right\rangle}, \\
\end{array}\right.
\end{equation}
where $\textrm{rank}(y')$ is the number of $M$-complexes meeting at the point $y'{\in \cV_M^{\langle \infty\rangle}}$.

\subsubsection{Subordinate processes} \label{sec:subord}

Let $S=(S_t)_{t \geq 0}$ be a \emph{subordinator} defined on a probability space $(\widetilde{\Omega}, \mathcal{F}, \mathcal{P})$, i.e.\ a nondecreasing L\'evy process with values in $\R_+$. The laws of $S_t$, given by $\eta_t({\rm d}u):=\mathcal{P}(S_t \in {\rm d}u),$ $t\geq 0,$ form a convolution semigroup of probability measures on $[0,\infty)$ which is uniquely determined by the Laplace transform
\begin{align}\label{eq:laplace}
\int_{[0,\infty)} {\rm e}^{-\lambda u} \eta_t({\rm d}u)  = {\rm e}^{-t \phi(\lambda)}, \quad \lambda>0,
\end{align}
where the Laplace exponent $\phi$ is a \emph{Bernstein function} with $\phi(0+) = 0$. Our standard reference to Bernstein functions and corresponding stochastic processes is the monograph \cite{bib:SSV}.

It is known that every Bernstein function $\phi$ with $\phi(0+) = 0$ admits the representation
\begin{align} \label{eq:def_Phi}
\phi(\lambda) = b \lambda + \int_{(0,\infty)}(1-{\rm e}^{-\lambda u}) \rho({\rm d}u),
\end{align}
where $b \geq 0,$ and $\rho$ is a nonnegative Radon measure on $(0,\infty)$ obeying $\int_{(0,\infty)} (u \wedge 1)\rho({\rm d}u) < \infty$. The number $b$ and the measure $\rho$ are called the drift term and the L\'evy measure of the subordinator $S,$ respectively.

The following regularity conditions on te Bernstein function $\phi$ will be our standing assumptions for the entire paper.

\bigskip

\begin{itemize}	
	\item[\bf (B)] There exist $ \alpha\in (0,d_w]$ and $C_1,C_2,\lambda_0>0$ such that
		\begin{equation}\label{eq:phi_at_zero}
	C_1 \lambda^{  \alpha/d_w}\leq 	\phi(\lambda)\leq C_2\lambda^{ \alpha/d_w},\quad \lambda \in (0,\lambda_0].
	\end{equation}
	Moreover,
	\begin{align}\label{eq:phi_at_infinity}
	\lim_{\lambda \to \infty} \frac{\phi(\lambda)}{\log \lambda} = \infty.
	\end{align}
\end{itemize}

\bigskip

 Our framework in this paper covers a wide class of Bernstein functions and the corresponding subordinators, including $\alpha/d_w$-stable and relativistic $\alpha/d_w$-stable ones; here $\phi(\lambda)=\lambda^{\alpha/d_w}$, $\alpha \in (0,d_w]$, and $\phi(\lambda)=(\lambda+m^{d_w/\vartheta})^{\vartheta/d_w}-m$, $\vartheta \in (0,d_w)$, $m>0$, respectively. In the latter case we have $\phi(\lambda) \approx \lambda$ for $\lambda \to 0^{+}$, and $\phi(\lambda) \approx \lambda^{\vartheta/d_w}$ for $\lambda \to \infty$. Further examples can be found e.g.\ in the monograph \cite{bib:SSV}.

Clearly, under \eqref{eq:phi_at_infinity} we have $\lim_{\lambda \to \infty} \phi(\lambda) = \infty$, and therefore either $b>0$ or $\int_{(0,\infty)} \rho({\rm d}u) = \infty$. We also easily see from \eqref{eq:laplace} that in this case $\eta_t(\left\{0\right\})=0$, for every $t>0$. It was proven in \cite[Lemma 2.1]{bib:KaPP3} that under the complete assumption {\bf (B)}, for every $\gamma, t_0 >0$ there exists a constant $c=c(\gamma,t_0)$ such that
\begin{align}\label{eq:sub_est}
\int_0^{\infty} u^{-\gamma} \eta_t({\rm d}u) \leq c t^{ -\gamma d_w/\alpha}, \quad t \geq t_0.
\end{align}
which in particular gives that for every $t_0>0$,
$$
\sup_{t \geq t_0} \int_0^{\infty} u^{-\gamma} \eta_t({\rm d}u) < \infty.
$$
On the other hand, it follows from \eqref{eq:phi_at_zero} (see \cite[Lemma 2.2]{bib:KaPP2}) that
\begin{equation}
\label{eq:sub2}
\int_1^{\infty} \eta_t (u,\infty) \frac{{\rm d}u}{u} < \infty, \quad t>0.
\end{equation}
One can check that this is equivalent to the condition that
\begin{equation} \label{eq:sub}
\text{for every \ $a>1$ \ and \ $t>0$ \ we have}  \ \ \sum_{M=1}^{\infty} \eta_t\left(a^M, \infty\right) < \infty.
\end{equation}

Given a subordinator $S$ with Laplace exponent $\phi$ satisfying the assumption {\bf (B)}, we define the \emph{subordinate Brownian motion} $X = (X_t)_{t \geq 0}$ and the the \emph{subordinate reflected Brownian motion} $X^M = (X^M_t)_{t \geq 0}$ by
$$
X_t := Z_{S_t}, \quad t \geq 0,
$$
and
$$
X^M_t := Z^M_{S_t}, \quad t \geq 0,
$$
respectively. Formally, these processes are given on respective product probability spaces, but for simplicity their probability measures will be denoted by the same symbols $\mathbf{P}^x$, $x \in \mathcal{K}^{\left\langle \infty \right\rangle}$, and $\mathbf{P}^x_M$, $x \in \mathcal{K}^{\left\langle M\right\rangle}$ as for the diffusions $Z$ and $Z^M$ (here $x$ denotes the starting position of the process). By  general theory of subordination (see e.g.\ \cite[Chapters 5 and 13]{bib:SSV}) the subordinate processes $X$  and $X^M$ are Feller  with c\`adl\`ag paths.

As already mentioned above, under \eqref{eq:phi_at_infinity} we have $\eta_t(\left\{0\right\})=0$, $t>0$, which ensures the existence of transition probability densities of the processes $X$ and $X^M$. They are given by:
$$
p(t,x,y) = \int_0^{\infty} g(u,x,y) \eta_t({\rm d}u), \quad  t>0, \ x,y, \in \mathcal{K}^{\left\langle \infty \right\rangle},
$$
and
$$
p_M(t,x,y) = \int_0^{\infty} g_M(u,x,y) \eta_t({\rm d}u), \quad  t>0, \ x,y, \in \mathcal{K}^{\left\langle M \right\rangle},
$$
respectively. Due to Tonelli's theorem, one has
\begin{equation}
\label{eq:subrefldens}
p_{M}\left(t,x,y\right)= \left\{ \begin{array}{ll}
\displaystyle\sum_{y'\in \pi_{M}^{-1} (y)}{ p(t,x,y')} & \textrm{if } y \in \mathcal{K}^{\left\langle M\right\rangle} \backslash \cV_{M}^{\left\langle M\right\rangle}, \\
\displaystyle\sum_{y'\in \pi_{M}^{-1} (y)}{p(t,x,y')} \cdot \textrm{rank}(y') & \textrm{if } y \in \cV_{M}^{\left\langle M\right\rangle} \\
\end{array}\right.
\end{equation}
(recall that $\textrm{rank}(y')$ denotes the number of $M$-complexes meeting at the point $y'{\in \cV_M^{\langle \infty\rangle}}$).
Both densities $p(t,x,y)$ and $p_M(t,x,y)$ are symmetric in space variables and bounded for every fixed $t>0$. They also inherit the continuity properties from the densities $g(t,x,y)$ and $g_M(t,x,y)$, which is proven in Lemma \ref{lem:regularity_of_p} below. In consequence, both processes $X$ and $X^M$ are strong Feller.

\begin{lemma} \label{lem:regularity_of_p} The following hold.
\begin{itemize}
\item[(a)] For every fixed $t_0 >0$ there exists a constant $c_1=c_1(t_0)$ such that for every $t \geq t_0$ we have
$$
\sup_{(x,y) \in \mathcal{K}^{\left\langle \infty \right\rangle} \times \mathcal{K}^{\left\langle \infty \right\rangle}} p(t,x,y) \leq c_1 t^{-d/\alpha}.
$$
Moreover, the function $(t,x,y) \mapsto p(t,x,y)$ is continuous on $(0,\infty) \times \mathcal{K}^{\left\langle \infty \right\rangle} \times \mathcal{K}^{\left\langle \infty \right\rangle}$.

\item[(b)] For every fixed $t_0 >0$ there is a constant $c_2=c_2(t_0) >0$ such that for every $t \geq t_0$ and $M \in \Z$ we have
$$
\sup_{(x,y) \in \mathcal{K}^{\left\langle M \right\rangle} \times \mathcal{K}^{\left\langle M \right\rangle}} p_M(t,x,y) \leq c_2 \left(t^{ -d/\alpha} \vee L^{ -d M}\right).
$$
Moreover, the function $(t,x,y) \mapsto p_M(t,x,y)$ is continuous on $(0,\infty) \times \mathcal{K}^{\left\langle M \right\rangle} \times \mathcal{K}^{\left\langle M \right\rangle}$.
\end{itemize}
\end{lemma}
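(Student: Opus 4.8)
The plan is to derive both claims in Lemma \ref{lem:regularity_of_p} from the subordination formula $p(t,x,y) = \int_0^\infty g(u,x,y)\,\eta_t(\mathrm du)$ (resp. its reflected analogue \eqref{eq:subrefldens}), using the sub-Gaussian bound \eqref{eq:kum} for $g$ and the moment estimate \eqref{eq:sub_est}.

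For part (a), the uniform bound is obtained by splitting the $u$-integral at a fixed threshold. On the region $u \geq 1$ (say), the upper bound in \eqref{eq:kum} gives $g(u,x,y) \leq C_3^g u^{-d_s/2}$, and since $d_s/2 = d/d_w$, the estimate \eqref{eq:sub_est} with $\gamma = d/d_w$ yields $\int_{[1,\infty)} g(u,x,y)\,\eta_t(\mathrm du) \leq C_3^g c\, t^{-d/\alpha}$ for $t \geq t_0$. On the region $u \in (0,1)$ one cannot use the polynomial decay of $g$ directly, but here the crucial point is to exploit the exponential factor in \eqref{eq:kum}: for $u$ small the Gaussian term forces $g(u,x,y)$ to be bounded uniformly in $x,y$ — indeed $u^{-d_s/2}\exp(-C_4^g(|x-y|^{d_w}/u)^{1/(d_J-1)})$ is bounded by a constant depending only on $d_s, d_w, d_J$ (optimize over $r = |x-y|$), so $\sup_{x,y} g(u,x,y) \leq c'$ uniformly for $u \in (0,1)$, and thus $\int_{(0,1)} g(u,x,y)\,\eta_t(\mathrm du) \leq c' \eta_t((0,1)) \leq c'$. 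Combining, $\sup_{x,y} p(t,x,y) \leq c_1 t^{-d/\alpha}$ for $t \geq t_0$ (enlarging the constant to absorb the $O(1)$ term, which is dominated by $t^{-d/\alpha}$ only after noting $t \geq t_0$ — actually one should keep $t^{-d/\alpha}$ as the genuine rate and use that for $t\ge t_0$ the bound $c'\le c' t_0^{d/\alpha} t^{-d/\alpha}$ holds). For continuity of $(t,x,y)\mapsto p(t,x,y)$ on $(0,\infty)\times\cK^{\langle\infty\rangle}\times\cK^{\langle\infty\rangle}$, I would apply dominated convergence to the subordination integral: $g(\cdot,\cdot,\cdot)$ is jointly continuous on $(0,\infty)\times\cK^{\langle\infty\rangle}\times\cK^{\langle\infty\rangle}$, and on compact time intervals $[t_1,t_2]\subset(0,\infty)$ one needs an $\eta_t$-integrable dominating function uniform in $(t,x,y)$ near a given point — the bound $g(u,x,y)\le C_3^g u^{-d_s/2}$ for $u\ge 1$ together with $\sup_{x,y}g(u,x,y)\le c'$ for $u\in(0,1)$ provides it, once one also controls the continuity of $t\mapsto\eta_t$ in the vague/weak sense (convolution semigroup), which is standard. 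This is a routine but slightly delicate dominated-convergence argument.

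For part (b), the reflected density is $p_M(t,x,y) = \sum_{y' \in \pi_M^{-1}(y)} p(t,x,y')$ (up to the rank factor on vertices, which is bounded by $r_0$, hence harmless). The number of preimages $\#\pi_M^{-1}(y)$ is infinite, so the naive bound $\#\pi_M^{-1}(y)\cdot\sup p$ fails; instead I would go back to $p_M(t,x,y) = \int_0^\infty g_M(u,x,y)\,\eta_t(\mathrm du)$ and bound $g_M$ directly. Here the key is a uniform-in-$M$ bound $\sup_{x,y\in\cK^{\langle M\rangle}} g_M(u,x,y) \leq c(u^{-d_s/2}\vee L^{-dM})$: for $u$ comparable to or smaller than the natural time scale $L^{d_w M}$ of the complex $\cK^{\langle M\rangle}$, summing the sub-Gaussian tails over $y'\in\pi_M^{-1}(y)$ (these preimages are spread out at distances $\gtrsim L^{Mj}$ for the $j$-th "shell", so the sum of $\exp(-c(L^{Mj\,d_w}/u)^{1/(d_J-1)})$ converges geometrically) gives $g_M(u,x,y)\leq c\,u^{-d_s/2}$; for $u \gg L^{d_w M}$ the reflected process has essentially equilibrated and $g_M(u,x,y) \approx \mu(\cK^{\langle M\rangle})^{-1} = L^{-dM}$ (the reflected Brownian motion on the finite complex has a spectral gap, so its density converges to the uniform density at rate governed by that gap). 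Then integrating against $\eta_t$ and using \eqref{eq:sub_est} exactly as in part (a) gives $\sup_{x,y}p_M(t,x,y) \leq c_2(t^{-d/\alpha}\vee L^{-dM})$ for $t\ge t_0$; the $L^{-dM}$ term survives because for large $M$ the mass $L^{-dM}$ can exceed $t^{-d/\alpha}$. Continuity of $p_M$ follows from continuity of $g_M$ (inherited from $g$ via the locally finite sum \eqref{eq:refldens}, which is locally uniformly convergent by the same geometric tail bound) plus dominated convergence, as before.

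The main obstacle is the uniform-in-$M$ control of $g_M$ in part (b): one must show that the sub-Gaussian estimate for $g$ survives the (infinite) folding sum over $\pi_M^{-1}(y)$ without constants blowing up as $M\to\infty$, and correctly identify the crossover between the "short-time" rate $u^{-d_s/2}$ and the "long-time" equilibrium level $L^{-dM}$. This requires (i) a geometric lower bound on the distances $|x - y'|$ as $y'$ ranges over the preimages at successive scales — available from the self-similar structure of $\cK^{\langle\infty\rangle}$ and the metric properties of the $M$-complexes — so that the tail sum $\sum_{y'}\exp(-C_4^g(|x-y'|^{d_w}/u)^{1/(d_J-1)})$ is dominated by a convergent geometric series uniformly in $M$ and in $x,y\in\cK^{\langle M\rangle}$; and (ii) a heat-kernel-type upper bound for the reflected diffusion $Z^M$ at times beyond the relaxation time, which can be extracted from the sub-Gaussian bounds for $g$ by noting that $g_M(u,x,y)\le \sum_{y'}g(u,x,y')$ and, for $u\ge L^{d_w M}$, controlling this sum by $\mu(\cK^{\langle M\rangle})^{-1}$ times a bounded factor (alternatively, invoking the on-diagonal upper bound $g_M(u,x,x)\le c L^{-dM}$ for $u\gtrsim L^{d_wM}$ which is standard for Nash-type inequalities on the finite complex, then using the semigroup property and symmetry). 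The rest — the threshold splitting, the application of \eqref{eq:sub_est}, and the dominated-convergence continuity arguments — is routine.
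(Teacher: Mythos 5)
Your proposal is broadly in the right spirit — subordination formula plus the on\mbox{-}diagonal bound on $g$ plus estimate \eqref{eq:sub_est} — but the argument for part (a) as written contains a genuine error in the treatment of small subordination times.

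You split the $u$-integral at $u=1$ and claim that for $u\in(0,1)$ the exponential factor in \eqref{eq:kum} forces $\sup_{x,y}g(u,x,y)$ to be bounded by a constant independent of $u$. This is false: the supremum over $x,y$ of $u^{-d_s/2}\exp\bigl(-C_4^g(|x-y|^{d_w}/u)^{1/(d_J-1)}\bigr)$ is attained at $x=y$, where the exponential equals $1$ and the bound is $C_3^g\,u^{-d_s/2}\to\infty$ as $u\to 0$. Your ``optimize over $r=|x-y|$'' remark maximizes in the wrong direction — for fixed $r>0$ the quantity is bounded in $u$, but you need a bound uniform over $r\ge 0$ for fixed small $u$, which blows up. A second problem is the absorption of the resulting $O(1)$ term: a constant is \emph{not} dominated by $c\,t^{-d/\alpha}$ for large $t$ (indeed $t^{-d/\alpha}\to 0$), so the final inequality $c'\le c't_0^{d/\alpha}t^{-d/\alpha}$ you propose is wrong for $t>t_0$. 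Both problems disappear once you drop the split entirely: the on\mbox{-}diagonal bound $g(u,x,y)\le C_3^g u^{-d_s/2}$ holds for \emph{all} $u>0$, and \eqref{eq:sub_est} with $\gamma=d_s/2=d/d_w$ already gives $\int_0^\infty u^{-d_s/2}\eta_t({\rm d}u)\le c\,t^{-d/\alpha}$ — the integrability near $u=0$ is precisely what assumption \eqref{eq:phi_at_infinity} (via \eqref{eq:sub_est}) buys you. This single-line argument is exactly what the paper does.

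Two further remarks, not errors but differences from the paper. For the continuity in part (a), the paper does not use dominated convergence but an explicit $\varepsilon$-argument: it writes $p(t,x,y)=\mathbf{E}_{\mathcal P}[g(S_t,x,y)]$, truncates $S_{t/2}$ to $[r,R]$ using \eqref{eq:sub_est}, invokes the stochastic continuity of $S$ together with Cauchy–Schwarz to control $\mathcal{P}(|S_t-S_u|\ge\delta)$, and then uses uniform continuity of $g$ on the compact $[r,R]$. Your DCT sketch can be made to work (Lévy processes have no fixed discontinuities, and $S_t\ge S_{t_0/2}$ supplies a dominating function by \eqref{eq:sub_est}), but you gloss over exactly the point the paper's argument is built to handle — the measure $\eta_t$ itself varies with the parameter. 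For part (b), the paper simply cites the uniform-in-$M$ estimate $\sup_{x,y}g_M(u,x,y)\le c\,(u^{-d_s/2}\vee L^{-dM})$ from \cite[Corollary 3.1]{bib:MO} rather than re-deriving it; your sketch of how to prove it by summing sub-Gaussian tails over the folding shells is the right idea, but it is more work than the paper expends.
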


\begin{proof}
We first show (a). The boundedness property follows directly from the estimate
$$
\sup_{(x,y) \in \mathcal{K}^{\left\langle \infty \right\rangle} \times \mathcal{K}^{\left\langle \infty \right\rangle}}  g(u,x,y) \leq C_3^g u^{-d_s/2}, \quad u >0,
$$
(constant $C_3^g$ comes from the upper on-diagonal estimate of the density $g(u,x,y)$) and \eqref{eq:sub_est} applied to $\gamma = d_s/2$.

We now prove the continuity. Fix $(t,x,y) \in (0,\infty) \times \mathcal{K}^{\left\langle \infty \right\rangle} \times \mathcal{K}^{\left\langle \infty \right\rangle}$ and $\varepsilon >0$. By \eqref{eq:sub_est} there exists $r>0$ such that
\begin{align} \label{eg:pt_cts_1}
 C_3^g\mathbf{E}_{\mathcal{P}} \left[(S_{t/2})^{-d_s/2} \1_{S_{t/2} \in (0,r]}\right] = C_3^g \int_0^r u^{-d_s/2} \eta_{t/2}(du) < \frac{\varepsilon}{8}.
\end{align}
Let $R> r \vee 1$ be such that
\begin{align} \label{eg:pt_cts_2}
C_3^g (R-1)^{-d_s/2} < \frac{\varepsilon}{8}.
\end{align}
Moreover, by continuity of the function $(u,x,y) \mapsto g(u,x,y)$ on $(0,\infty) \times \mathcal{K}^{\left\langle \infty \right\rangle} \times \mathcal{K}^{\left\langle \infty \right\rangle}$ (which implies the uniform continuity on compact subsets of $(0,\infty) \times \mathcal{K}^{\left\langle \infty \right\rangle} \times \mathcal{K}^{\left\langle \infty \right\rangle}$) there exists $\delta \in (0,1)$ such that for all $s, v \in [r,R]$ with $|s-v|< \delta$ and for all $z,w \in \mathcal{K}^{\left\langle \infty \right\rangle}$ with  $|x-z| < \delta, |y-w| < \delta$ we have
\begin{align} \label{eg:pt_cts_3}
|g(s,x,y) - g(v,z,w)| < \frac{\varepsilon}{4}.
\end{align}
On the other hand, it follows from the stochastic continuity of the subordinator $S$ that there exists $\widetilde \delta \in (0,\delta \wedge t/2)$ such that
$$
\mathcal{P}(|S_t-S_u|\geq \delta) < \left(\frac{\varepsilon}{4\sqrt{4 (C_3^g)^2\mathbf{E}_{\mathcal{P}} \left[(S_{t/2})^{-d_s}\right]}}\right)^2, \quad \text{for every \ $u>0$ \ that satisfies \ $|t-u|<\widetilde \delta$}
$$
(note that the finiteness of the expectation in the denominator follows again from \eqref{eq:sub_est}).

We may now write, for every $u >0$ and $z,w \in \mathcal{K}^{\left\langle \infty \right\rangle}$ such that $|t-u|+|x-z|+|y-w| < \widetilde \delta$,
\begin{align*}
&|p(t,x,y)  - p(u,z,w)| \leq \mathbf{E}_{\mathcal{P}} \left[ |g(S_t,x,y)-g(S_u,z,w)|\right] \\
                      & = \mathbf{E}_{\mathcal{P}} \left[ |\ldots|\1_{\left\{|S_t-S_u| \geq \delta\right\}}\right]+\mathbf{E}_{\mathcal{P}} \left[ |\ldots|\1_{\left\{|S_t-S_u| < \delta\right\}} \right] \\
											& = \mathbf{E}_{\mathcal{P}} \left[ |\ldots|\1_{\left\{|S_t-S_u| \geq \delta\right\}} \right] + \mathbf{E}_{\mathcal{P}} \left[ |\ldots|\1_{\left\{|S_t-S_u| < \delta\right\}}\1_{\left\{S_{t/2}<r\right\}}\right] \\
											& \ \ \ \ + \mathbf{E}_{\mathcal{P}} \left[ |\ldots|\1_{\left\{|S_t-S_u| < \delta\right\}}\1_{\left\{S_{t/2} \geq r\right\}}\1_{\left\{S_{u}, S_t \leq R\right\}}\right]
											+ \mathbf{E}_{\mathcal{P}} \left[ |\ldots|\1_{\left\{|S_t-S_u| < \delta\right\}}\1_{\left\{S_{t/2} \geq r\right\}}\1_{\left\{S_{u} > R \, \vee S_t > R\right\}}\right] \\
											& =: I_1+I_2+I_3+I_4.
\end{align*}
We first estimate $I_1$. By the Cauchy--Schwarz inequality, the elementary estimate $(a-b)^2 \leq a^2+b^2$, $a,b \geq 0$, and the fact that $S_t, S_u \geq S_{t/2}$, $\mathcal{P}$-a.s., we get
\begin{align*}
I_1 & \leq \sqrt{\mathbf{E}_{\mathcal{P}} \left[ |g(S_t,x,y)-g(S_u,z,w)|^2\right]} \sqrt{\mathcal{P}(|S_t-S_u| \geq \delta)}  \\
    & < \sqrt{\mathbf{E}_{\mathcal{P}} \left[ (2 C_3^g (S_{t/2})^{-d_s/2})^2 \right]} \cdot \frac{\varepsilon}{4\sqrt{4(C_3^g)^2 \mathbf{E}_{\mathcal{P}} \left[(S_{t/2})^{-d_s}\right]}} = \frac{\varepsilon}{4}.
\end{align*}
By monotonicity and \eqref{eg:pt_cts_1}, we also have
$$
I_2 \leq 2C_3^g \mathbf{E}_{\mathcal{P}} \left[(S_{t/2})^{-d_s/2} \1_{S_{t/2}<r}\right] < \frac{\varepsilon}{4},
$$
and, by \eqref{eg:pt_cts_3},
$$
I_3 < \frac{\varepsilon}{4}.
$$
Finally, by \eqref{eg:pt_cts_2}, we get
$$
I_4 \leq \mathbf{E}_{\mathcal{P}} \left[ C_3^g\left((S_t)^{-d_s/2}+(S_u)^{-d_s/2}\right) \1_{\left\{|S_t-S_u| < \delta\right\}}\1_{\left\{S_{u} > R \, \vee S_t > R\right\}}\right]
    \leq 2C_3^g (R-1)^{-d_s/2} \leq \frac{\varepsilon}{4},
$$
which completes the proof of (a).

We now show the boundedness in part (b). It follows from \cite[Corollary 3.1]{bib:MO} that there is a constant $c_1>0$ (uniform in $M$) such that
\begin{align*}
\sup_{(x,y) \in \mathcal{K}^{\left\langle M \right\rangle} \times \mathcal{K}^{\left\langle M \right\rangle}}  g_M(u,x,y) & \leq c_1\left( u^{-d_s/2} \vee L^{ -d M}\right) \\
 & = c_1\left( u^{-d_s/2}\1_{\left\{u \in (0,L^{d_wM})\right\}} + L^{ -d M}\1_{\left\{u \geq L^{d_wM}\right\}}\right) , \quad u >0.
\end{align*}
Therefore, by applying \eqref{eq:sub_est} with $\gamma = d_s/2$, we get the claimed bound. The proof of the continuity of $p_M(t,x,y)$ runs in the same way as that in part (a). It is omitted.
\end{proof}

Some of our results in the next chapters will be based on properties of the bridge measures for subordinate processes.
We shall denote by $\mathbf P^{x,y}_{t}$, $t>0$, $x,y \in \cK^{\langle \infty \rangle}$, a version of the conditional law, under $\mathbf P^x$, of $(X_s)_{s \in [0,t]}$ given $X_t=y$. This is a measure on $D([0,t], \cK^{\langle \infty \rangle})$ that satisfies the following disintegration formula:
for any $0 <s<t$ and  $A\in \sigma(X_u: u \leq s),$
\begin{equation}\label{eq:bridge}
p(t,x,y)\mathbf{P}^{x,y}_{t}(A) = \mathbf{E}^x[\mathbf{1}_A p(t-s,X_s,y)].
\end{equation}
For precise definition and more information on the properties of Markovian bridges constructed for general Feller processes we refer to \cite{bib:CU}.
We also consider  bridge measures for the reflected processes, i.e.\ the measures $\mathbf{P}^{x,y}_{M,t}$, $M \in \mathbb{Z}$, $t>0$, $x,y \in \cK^{\langle M \rangle}$ on $D([0,t], \cK^{\langle M \rangle})$ such that for every $0\leq s < t$ and $A \in \sigma\left(X_u^M: u \leq s \right)$ we have
\begin{equation}
\label{eq:Mbridge}
p_M(t,x,y) \mathbf{P}^{x,y}_{M,t}(A)= \mathbf{E}^x \left[\mathbf{1}_{A} p_M(t-s,X^M_s,y) \right].
\end{equation}
The expected values with respect to the measures $\mathbf{P}^{x,y}_{t}$ and $\mathbf{P}^{x,y}_{M,t}$ will be denoted by $\mathbf{E}^{x,y}_{t}$ and $\mathbf{E}^{x,y}_{M,t}$, respectively.

The following identity, connecting the bridge measures for the reflected and free subordinate Brownian motion, will be needed below. Its proof is a consequence of  in \cite[Theorem 4.3]{bib:KOP}; it follows analogously to the proof of \cite[Lemma 2.6]{bib:KaPP2}.

Let us recall that $\Delta_{M,i}$ for $1 \leq i \leq N$ denote the $M$-complexes in $\cK^{\langle M+1 \rangle}$ (see Definition \ref{def:Mcomplex} (10)) and $\pi_{\Delta_{M,i}}: \cK^{\langle \infty \rangle} \to \Delta_{M,i}$ is a projection onto $\Delta_{M,i}$ (see \ref{eq:uprojection}).

\begin{lemma}
\label{lem:bridges} Let $\cK^{\langle \infty \rangle}$ be an USNF with the GLP and let the assumption \textup{\textbf{(B)}} hold.  We have the following.
\begin{itemize}
\item[(a)] For every $t>0$, $x,y \in \cK^{\langle \infty \rangle} \backslash \cV_M^{\langle \infty \rangle}$, $M \in \mathbb{Z}_{+}$ and the set $A \in \cB(D[0,t], \cK^{\langle M \rangle})$ we have
\begin{equation*}
p_M(t,\pi_M(x), \pi_M(y)) \mathbf{P}_{M,t}^{\pi_M(x),\pi_M(y)} (A) = \sum_{y' \in \pi_M^{-1} (\pi_M (y))} p(t,x,y') \mathbf{P}_{t}^{x,y'} \left(\pi_M^{-1}(A)\right).
\end{equation*}
\item[(b)] Consequently, for any $i=1,2,...,N$ and $x \in \cK^{\langle M \rangle} \backslash \cV_M^{\langle \infty \rangle}$,
\begin{equation*}
\sum_{x' \in \pi_M^{-1}(x)} p(t,\pi_{\Delta_{M,i}}(x), x') \mathbf{P}_{t}^{\pi_{\Delta_{M,i}}(x),x'} \left(\pi_M^{-1}(A) \right) = \sum_{x' \in \pi_M^{-1}(x)} p(t,x, x') \mathbf{P}_{t}^{x,x'} \left(\pi_M^{-1}(A) \right).
\end{equation*}
\end{itemize}
\end{lemma}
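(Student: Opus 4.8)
The plan is to derive both parts from the density decomposition \eqref{eq:subrefldens} together with the disintegration formulas \eqref{eq:bridge} and \eqref{eq:Mbridge}, following the scheme of \cite[Lemma 2.6]{bib:KaPP2}. I would first reduce matters to an identity about the \emph{free} bridge measures by noting that, for $x,y \notin \cV_M^{\langle \infty \rangle}$, the relation $Z^M_t = \pi_M(Z_t)$ (subordinated, i.e.\ $X^M_t = \pi_M(X_t)$) implies that the law of the reflected path $(X^M_s)_{s\le t}$ under $\mathbf{P}^x$ is the pushforward by $\pi_M$ of the law of $(X_s)_{s\le t}$ under $\mathbf{P}^x$. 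Here one uses that, since $y\notin\cV_M^{\langle\infty\rangle}$, the fiber $\pi_M^{-1}(\pi_M(y))$ consists of points none of which lie in $\cV_M^{\langle\infty\rangle}$, so $p_M(t,\pi_M(x),\pi_M(y)) = \sum_{y'\in\pi_M^{-1}(\pi_M(y))} p(t,x,y')$ with no rank weights; similarly the event $\{X^M_t\in\cdot\}$ conditioned near $\pi_M(y)$ disintegrates over the fiber.

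For part (a), I would argue as follows. Fix $0\le s<t$ and $A\in\sigma(X_u^M:u\le s)$; by a monotone class argument it suffices to treat cylinder sets, and then pass to general $A\in\cB(D[0,t],\cK^{\langle M\rangle})$. Write $A = \{(X^M_{u_1},\dots,X^M_{u_n})\in B\}$ with $u_n\le s<t$. Then $\pi_M^{-1}(A) = \{(\pi_M(X_{u_1}),\dots,\pi_M(X_{u_n}))\in B\}$, an event in $\sigma(X_u:u\le s)$. Using \eqref{eq:Mbridge} with the splitting at time $s$,
\begin{align*}
p_M(t,\pi_M(x),\pi_M(y))\,\mathbf{P}^{\pi_M(x),\pi_M(y)}_{M,t}(A)
 &= \mathbf{E}^{\pi_M(x)}_M\!\left[\mathbf{1}_A\, p_M(t-s,X^M_s,\pi_M(y))\right]\\
 &= \mathbf{E}^{x}\!\left[\mathbf{1}_{\pi_M^{-1}(A)}\, p_M(t-s,\pi_M(X_s),\pi_M(y))\right],
\end{align*}
where the last step uses that $\mathbf{E}^{\pi_M(x)}_M[\,\cdot\,(X^M)] = \mathbf{E}^x[\,\cdot\,(\pi_M(X))]$ for $x\in\cK^{\langle M\rangle}$ (and, since $X_s\notin\cV_M^{\langle\infty\rangle}$ for $\mu$-a.e.\ path position, the rank-weighted branch of \eqref{eq:subrefldens} is not triggered). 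Now substitute $p_M(t-s,\pi_M(X_s),\pi_M(y)) = \sum_{y'\in\pi_M^{-1}(\pi_M(y))} p(t-s,X_s,y')$ — again legitimate because $y\notin\cV_M^{\langle\infty\rangle}$ — interchange sum and expectation, and apply \eqref{eq:bridge} to each term to recognize $\mathbf{E}^x[\mathbf{1}_{\pi_M^{-1}(A)} p(t-s,X_s,y')] = p(t,x,y')\mathbf{P}^{x,y'}_t(\pi_M^{-1}(A))$. This yields exactly the claimed identity.

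For part (b), I would apply (a) twice. First, for $x\in\cK^{\langle M\rangle}\setminus\cV_M^{\langle\infty\rangle}$ the left-hand side of (a), read with the roles of start and endpoint fixed and the start point replaced by the lift $\pi_{\Delta_{M,i}}(x)\in\Delta_{M,i}$, gives $p_M(t,x,x)\mathbf{P}^{x,x}_{M,t}(A)$ — here one uses $\pi_M\circ\pi_{\Delta_{M,i}} = \pi_M$ (immediate from \eqref{eq:uprojection}), so the reflected bridge on the left is the same whether one starts from $x$ or from $\pi_{\Delta_{M,i}}(x)$. Doing the same with $\pi_{\Delta_{M,i}}$ replaced by the identity (i.e.\ starting from $x$ itself, legitimate since $x\in\cK^{\langle M\rangle}$) gives the right-hand sum equal to the very same quantity $p_M(t,x,x)\mathbf{P}^{x,x}_{M,t}(A)$. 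Equating the two expressions proves (b). The only genuine subtlety — and the step I would be most careful about — is the handling of the exceptional vertex set: one must check that restricting to $x,y\notin\cV_M^{\langle\infty\rangle}$ is enough to avoid the rank weights in \eqref{eq:subrefldens} both at the terminal point and along the path, and that the pushforward identity $\pi_M(\mathbf{P}^x)$-versus-$\mathbf{P}^x_M$ for the \emph{bridge} measures is exactly what \cite[Theorem 4.3]{bib:KOP} delivers after subordination; this is where the cited results and the GLP (ensuring $\pi_M$ is well defined and the folding is consistent across levels) do the real work.
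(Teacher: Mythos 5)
Your proposal takes the same route as the paper, which itself just defers to \cite[Theorem 4.3]{bib:KOP} and the analogous \cite[Lemma 2.6]{bib:KaPP2}: disintegrate via \eqref{eq:Mbridge}, push forward through $\pi_M$, unfold with \eqref{eq:subrefldens}, apply \eqref{eq:bridge} termwise, and then obtain (b) from two applications of (a) using $\pi_M\circ\pi_{\Delta_{M,i}}=\pi_M$. This is correct, and (b) in particular is handled cleanly.

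Two small imprecisions worth noting, neither fatal. First, the parenthetical about ``$X_s\notin\cV_M^{\langle\infty\rangle}$ for $\mu$-a.e.\ path position'' mislocates where the rank weight of \eqref{eq:subrefldens} lives: it is attached to the \emph{terminal} argument of the kernel, which in your substitution is $\pi_M(y)$, and the unweighted branch is selected because the hypothesis $y\notin\cV_M^{\langle\infty\rangle}$ forces $\pi_M(y)\notin\cV_M^{\langle M\rangle}$ — not because of an almost-everywhere avoidance by the path. (The well-definedness of $\pi_M$ at vertices is a separate matter, settled by the GLP.) Second, the identity you write, $p_M(t-s,\pi_M(X_s),\pi_M(y))=\sum_{y'\in\pi_M^{-1}(\pi_M(y))}p(t-s,X_s,y')$, combines \eqref{eq:subrefldens} with the folding invariance $p_M(t,\pi_M(z),\cdot)=p_M(t,z,\cdot)$; the latter is exactly what \cite[Theorem 4.3]{bib:KOP} delivers and you do acknowledge it at the end, but it deserves an explicit flag at the point of use, since without it the first argument would have to remain $\pi_M(X_s)$ rather than $X_s$.
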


\subsubsection{Transition semigroups and generators of the processes} \label{sec:refl_ev}
Throughout, we will use the notation
$(T_t)_{t\geq 0},$ $(T_t^M)_{t\geq 0}$ for the $L^2$-transition semigroups of the processes $X, X^M.$ We have
\[T_t f(x)=\int_{\mathcal K^{\langle\infty\rangle}} f(y)p(t,x,y)\,{\rm d}\mu(y),\quad f\in L^2(\mathcal K^{\langle \infty\rangle}, \mu),\ x\in\mathcal K^{\langle\infty\rangle}, \  t >0,\]
and
\[T_t^M f(x)=\int_{ \mathcal K^{\langle M \rangle}} f(y) p_M(t,x,y) \,{\rm d}\mu(y),\quad f\in L^2(\mathcal K^{\langle M\rangle}, \mu),\ x\in\mathcal K^{\langle M\rangle}, \ t>0.\] Generators of these semigroups can be identified as the operators $-\phi(-\mathcal L)$ and $-\phi(-\mathcal L_M)$, where $\mathcal L$ and $\mathcal L_M$  are generators of the diffusion processes $Z$ and $Z^M$, respectively.

By Lemma \ref{lem:regularity_of_p} (b), for every $t > 0$ the kernel $p_M(t,x,y)$ is a bounded function, and since $\mu(\mathcal K^{\langle M\rangle})<\infty$, the operators $T_t^M$, $t>0$, are Hilbert--Schmidt on $L^2(\mathcal K^{\langle M\rangle}, \mu)$. The operators $\phi(-\mathcal L_M)$ have purely discrete spectra:
 their eigenvalues, all of finite multiplicity, satisfy $0=\lambda_1^M < \lambda_2^M \leq \lambda_3^M \leq \ldots \rightarrow \infty.$
Denote the eigenvalues of the operator $-\mathcal L_M$ (which is a special case of $\phi(-\mathcal L_M)$ for $\phi(\lambda)=\lambda$) by
$\mu_1^M, \mu_2^M, \mu_3^M, \ldots$. Since they scale as
$\mu_{k}^{M} = L^{-Md_w}\cdot \mu_k^{1}$, $k = 1,2,\dots$, we have
\begin{equation}\label{eq:2.8}
\lambda_k^{M}=\phi(\mu_{k}^{M})=\phi(L^{-Md_w}\cdot \mu_k^{1}) \qquad k = 1,2,\ldots.
\end{equation}
Moreover, there is a complete system of eigenfunctions $(\psi_k^M)_{k\geq 1}$ (with $\psi_1\equiv L^{-\frac{Md}{2}}$) such that
\begin{equation}\label{eq:2.9}
\phi(-\mathcal L_M)\psi_k^M = \lambda_k^{M}\psi_k^M \qquad {\rm and} \qquad   T_t^M\psi_k^M = {\rm e}^{-t\lambda_k^{M}}\psi_k^M .
\end{equation}

\subsection{Random alloy-type potentials and Feynman-Kac semigroups} \label{sec:Ratp_and_FKS}

Throughout this section we assume that $\cK^{\langle \infty \rangle}$ is an USNF with the GLP. Let us recall that $\cV_0^{\langle M \rangle}$ denotes the set of vertices of all $0$-complexes inside $\cK^{\langle M \rangle}$ and analogously $\cV_0^{\langle \infty \rangle}$ is the set of vertices of all $0$-complexes inside $\cK^{\langle \infty \rangle}$.

We consider the following class of random potentials.

\begin{definition} \textbf{(Fractal alloy-type potential)} \label{def:alloy} Let $W: \cK^{\langle \infty \rangle} \times \cV_0^{\langle \infty \rangle} \to [0,\infty)$ be a  Borel-measurable function, and let $\{\xi_v: v\in \cV_0^{\langle \infty \rangle}\}$ be a family of i.i.d.\  nonnegative and nondegenerate random variables over
a probability space $\left(\Omega, \cM, \mathbb{Q}\right)$. Then the random field
\begin{equation}\label{eq:jeden}
V^{\omega}(x) := \sum_{v\in \cV_0^{\langle \infty \rangle}} \xi_v(\omega) \cdot W(x,v)
\end{equation}
will be called an \emph{alloy-type potential} based on the fractal lattice $\cV_0^{\langle \infty \rangle}$. The two-argument profile function $W$ will be referred to as the \emph{single-site potential}.
\end{definition}
 Later on, we will need to replace this alloy-type potential $V^{\omega}$ with the one based on the same single-site potential and the configuration of the lattice random variables which is periodized with respect to the mapping $\pi_M$. More precisely, we define
\begin{equation} \label{eq:periodized_pot}
V_M^{\omega}(x) = \sum_{v\in \mathcal V^{\langle M\rangle}_{0}} \xi_{v}(\omega) \sum_{v' \in \pi_M^{-1}(v)} W(x,v'), \quad x \in \cK^{\langle \infty\rangle}, \ M \in \mathbb{Z}_{+}.
\end{equation}
This definition relies on the Good Labeling Property of $\cK^{\langle \infty\rangle}$, which permitted to define  continuous projections $\pi_M : \cK^{\langle \infty\rangle} \to \cK^{\langle M\rangle}$. Note that in general it is not true that  the realizations of $V_M^{\omega}$ are periodic with respect to $\pi_M,$ i.e.\ there exists single-site potentials $W$  such that $V^{\omega}_M(\pi_M(x)) \neq V^{\omega}_M(x)$, for some $x \in \cK^{\langle \infty\rangle}$, see Example \ref{ex2}.

Without further assumptions, these potentials may not be regular enough to proceed. In particular, we need that $V^\omega$ and $V^\omega_M$ belong to the local Kato class of both $X$ and $X^M.$  Let us recall the definition.
\begin{definition}\label{def:kato} We say that a measurable function $f:\mathcal K^{\langle \infty \rangle}\to\mathbb R$ belongs to the Kato class of the process $X,$ denoted $\mathbb K^X,$ if
\begin{equation}\label{eq:Kato}
\lim_{t\to 0} \sup_{x\in\mathcal K^{\langle\infty\rangle}} \int_0^t  \left|T_t f(x)\right|  \,{\rm d}s = 0.
\end{equation}
We say that $f$  belongs to the local Kato class of the process $X,$ denoted $\mathbb K^X_{loc},$ if for any  $M$-complex $\Delta$ the function $f\cdot\mathbf 1_{\Delta}$ belongs to $\mathcal K^X.$
\end{definition}
\noindent To get the definition of the Kato class  and the local Kato class of $X^M$ (denotes as $\mathbb K^{X^M}$ and $\mathbb K^{X^M}_{loc}$), obvious changes have to be implemented. It is then direct to observe that $\mathbb K^{X^M}_{loc} = \mathbb K^{X^M}$.

\

To meet these requirements, we introduce the following assumptions:
\begin{itemize}
\item[\bf{(Q1)}] random variables $\xi_{ v},$ $ v\in \cV_0^{\langle\infty\rangle}$,  belong to $L^1(\mathbb Q)$;
\item[\bf {(W1)}] $W(\cdot,v)\in \mathbb K_{loc}^X$  for every $v\in \cV_0^{\langle\infty\rangle},$  and there exists a collection of numbers $\{a_{ v}: v\in \cV_0^{\langle\infty\rangle}\}$ with  $\sum_{v}a_{v}<\infty$ such that for all $x,v\in \mathcal{K}^{\langle \infty \rangle}$ and $M \in\mathbb Z$,
\begin{equation*}
\left(x \in \mathcal{K}^{\langle M\rangle} \wedge v\notin \mathcal{K}^{\langle M+1\rangle} \right) \Rightarrow  W(x,v)\leq a_{v};
\end{equation*}
\item[{ \bf (W2)}]  for sufficiently large $M \in \mathbb{Z}_{+}$,
\begin{equation}
\label{eq:a2cond}
\sum_{v' \in \pi_M^{-1}(\pi_M(v))} W\left(\pi_M(x),v'\right) \leq \sum_{v' \in \pi_M^{-1}(\pi_M(v))} W\left(\pi_{M+1}(x),v'\right), \quad x \in \cK^{\langle \infty \rangle}, \ v\in \cV_0^{\langle \infty \rangle}.
\end{equation}
\end{itemize}

As the profile function $W(x,v)$ measures the strength of influence of the single site located at $v$ onto a particle at the point $x$,
 condition {\bf (W2)} means that for every fixed
$x \in \cK^{\langle \infty \rangle}$ and $v \in \cV_0^{\langle \infty \rangle}$ the influence of the sites from $\pi_M^{-1}(\pi_M(v))$ onto the particle located at $\pi_M(x) \in \cK^{\langle M \rangle}$ is on average  smaller  than the influence onto the position $\pi_{M+1}(x)$.

Before we pass to the Kato-class issues,  observe that
under these assumptions  the random potentials \eqref{eq:jeden} and \eqref{eq:periodized_pot} are
$\mathbb Q-$a.s.\ finite. Indeed, the series $\sum_{v\in \cV_0^{\langle \infty\rangle}} a_v \xi_v$ is convergent  (to a finite limit) $\mathbb Q-$almost surely:\ we have
 \[\mathbb {E^Q}\sum_{v\in\ \cV_0^{\langle\infty\rangle}} a_v\xi_v  = \sum_v a_v\mathbb {E^Q}\xi_v <\infty,\]
 so the series is convergent in $L^1(\mathbb Q),$ thus also $\mathbb Q-$amost surely.
  For $x\in\mathcal K^{\langle \infty\rangle}$ let $\mathbb Z\ni M_0=M_0(x)$ be the smallest number for which $x\in\mathcal K^{\langle M_0\rangle}.$ We then have
\[V^\omega(x) = \sum_{v\in \mathcal K^{\langle M_0+1\rangle}} \xi_v(\omega) W(x,v) + \sum_{ v\notin \mathcal K^{\langle M_0+1\rangle}} \xi_{v}(\omega) W(x,v). \]
The first sum has a finite number of terms,  and the coefficients $W(x,v)$ under the second sum can be estimated by $a_v.$ The convergence follows.

For the periodized potential the argument is similar: for given $x$ and $M_0$ as above, if $M\geq M_0$, then 
\[ V^\omega_M(x) \leq  \sum_{v\in \cV_0^{\langle M\rangle}} \xi_v \left( W(x,v) +\sum_{v'\notin \cV_0^{\langle M_0+1\rangle}} a_{v'}\right)\leq
\sum_{v\in \cV_0^{\langle M\rangle}} \xi_v \left( W(x,v) +\|(a_v)\|_1\right) <\infty,\]
since the sum has a finite number of terms. When $M<M_0,$ then we split the  inner sum into two parts:\  the first one over $v'\in \cV_0^{\langle M_0\rangle},$ the other over $v'\notin \cV_0^{\langle M_0\rangle}.$
Again, the first one has finite number of terms and for the other we can use  {\bf (W1)}.

\begin{proposition}\label{prop:kato}
 Let {\bf (Q1)}, {\bf (W1)}  and {\bf (W2)}  hold.
Then:
\begin{itemize}
\item[(i)] $\mathbb Q-$almost surely, $V^\omega\in \mathbb K_{loc}^X(\mathcal K^{\langle\infty\rangle});$
\item[(ii)] For any $M\in\mathbb Z_+,$ $\mathbb Q-$almost surely, $V^\omega_M\in \mathbb K_{loc}^X(\mathcal K^{\langle \infty\rangle});$
\item[(iii)] For  sufficiently large $M\in\mathbb Z_+,$ $\mathbb Q-$almost surely $V_M^\omega\in \mathbb K^{X^M}(\mathcal K^{\langle M\rangle}).$
    \end{itemize}
\end{proposition}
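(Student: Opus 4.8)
The plan is to reduce all three statements to a single type of estimate: for a fixed $M$-complex $\Delta$, show that the truncated kernel integral $\int_0^t T_s(W(\cdot,v)\mathbf 1_\Delta)(x)\,{\rm d}s$ (or its analogue for the reflected process and the periodized potential) tends to $0$ uniformly in $x$ as $t\to 0$, and then sum over $v$ using the $\ell^1$-summable bounds $a_v$ from \textbf{(W1)}. The main ingredient is that each single-site profile $W(\cdot,v)$ lies in $\mathbb K^X_{\loc}$ by hypothesis, so for each individual $v$ the required smallness is immediate; the real work is controlling the tail of the sum over $v$ uniformly.

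For part (i): fix an $M$-complex $\Delta$ and write $V^\omega\mathbf 1_\Delta = \sum_{v} \xi_v W(\cdot,v)\mathbf 1_\Delta$. Split the sum at level $M+1$: the finitely many terms with $v\in\cK^{\langle M+1\rangle}$ are handled one at a time, since each $W(\cdot,v)\in\mathbb K^X_{\loc}$ and a finite sum of Kato-class functions is Kato-class. For the tail $v\notin\cK^{\langle M+1\rangle}$, by \textbf{(W1)} we have $W(x,v)\le a_v$ for $x\in\Delta\subset\cK^{\langle M\rangle}$, hence $\sum_{v\notin\cK^{\langle M+1\rangle}}\xi_v W(\cdot,v)\mathbf 1_\Delta \le \big(\sum_v a_v\xi_v\big)\mathbf 1_\Delta$, which is $\mathbb Q$-a.s.\ a finite constant times $\mathbf 1_\Delta$; and a bounded function supported on a set of finite $\mu$-measure is in $\mathbb K^X$ because $\sup_x\int_0^t T_s\mathbf 1_\Delta(x)\,{\rm d}s\le \sup_x\int_0^t(\sup_{x,y}p(s,x,y))\mu(\Delta)\,{\rm d}s$, and the on-diagonal bound $\sup_{x,y}p(s,x,y)\lesssim s^{-d/\alpha}$ on $s\in(0,t_0]$ combined with $d/\alpha<1$ (which follows from $\alpha\le d_w$ and $d_s=2d/d_w\le 2$, giving $d/\alpha = d_s d_w/(2\alpha)\le d_w/\alpha\cdot d_s/2$... more directly, one uses the sub-Gaussian off-diagonal decay to get integrability near $0$ — this is where I'd be careful) makes the time integral finite and vanishing with $t$. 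So $V^\omega\mathbf 1_\Delta\in\mathbb K^X$, which is (i).

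For (ii) and (iii): the periodized potential $V^\omega_M$ restricted to an $M'$-complex $\Delta$ is $\sum_{v\in\cV_0^{\langle M\rangle}}\xi_v\big(\sum_{v'\in\pi_M^{-1}(v)}W(\cdot,v')\big)\mathbf 1_\Delta$. The key point is that $\pi_M^{-1}(v)$ contains exactly one point in each $M$-complex, so for $x$ ranging over a fixed $\Delta$, only the finitely many $v'\in\pi_M^{-1}(v)$ that lie in a neighbourhood of $\Delta$ contribute a non-$a_{v'}$-bounded term; the rest are dominated by $\sum_{v'}a_{v'}\xi_{\pi_M(v')}<\infty$ $\mathbb Q$-a.s.\ (note the relabeling of the random variables is still through an $\ell^1$ weight, using that each $\xi_v$ is finite a.s.\ and \textbf{(Q1)}). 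One then argues as in (i). For (iii) one repeats the argument for the reflected process $X^M$ on $\cK^{\langle M\rangle}$: by Lemma~\ref{lem:regularity_of_p}(b) the kernel $p_M(t,x,y)$ satisfies the same type of on-diagonal bound, and since $\mu(\cK^{\langle M\rangle})<\infty$ the Kato condition for bounded functions is automatic, while the finitely many non-bounded single-site terms can be pulled back to $X$-Kato-class statements via the projection formula relating $p_M$ and $p$ (equation \eqref{eq:subrefldens}); this gives $V^\omega_M\in\mathbb K^{X^M}$, and since $\mathbb K^{X^M}_{\loc}=\mathbb K^{X^M}$ on the bounded fractal, we are done. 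The hypothesis \textbf{(W2)} is not needed for Kato-class membership itself but will be invoked later for monotonicity; I would remark that it plays no role here.

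The step I expect to be the main obstacle is making the tail estimate genuinely uniform in $x\in\cK^{\langle\infty\rangle}$ (resp.\ over all $M$-complexes) rather than merely locally uniform: the dangerous scenario is that as the base point $x$ moves to infinity, different and ever-larger collections of sites $v$ become ``close'' and thus escape the $a_v$ bound. This is exactly why \textbf{(W1)} is phrased with the implication ``$x\in\cK^{\langle M\rangle}\wedge v\notin\cK^{\langle M+1\rangle}\Rightarrow W(x,v)\le a_v$'' — it guarantees that for $x$ in a fixed complex the set of ``exceptional'' $v$ is finite and fixed — so once the restriction to a single $M$-complex $\Delta$ is made, uniformity over $x\in\Delta$ follows from uniformity in the finitely many exceptional terms plus the $\mathbb Q$-a.s.\ finite global bound on the tail. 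The only remaining subtlety is that the exceptional collection for the periodized potential $V^\omega_M$ grows with $M'$ (the level of $\Delta$), but since we only need membership in $\mathbb K^X_{\loc}$ — i.e.\ for each fixed $\Delta$ — this causes no problem.
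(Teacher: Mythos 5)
Your overall decomposition for (i)--(ii) matches the paper's, but two points deserve attention.

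\textbf{The tail bound in (i).}
The step you flag yourself (``this is where I'd be careful'') is indeed the problem. Lemma~\ref{lem:regularity_of_p}(a) gives $\sup_{x,y}p(s,x,y)\lesssim s^{-d/\alpha}$ only for $s\geq t_0$, not for $s\to 0$; moreover $d/\alpha<1$ is not guaranteed under \textbf{(B)} — taking $\alpha$ small makes $d/\alpha$ arbitrarily large — so even if the bound held for small $s$, the time integral need not converge. Your conclusion is right, but the route via the heat kernel is wrong. The correct (and much simpler) argument, used in the paper, is contractivity of the semigroup: since $T_s\mathbf 1\leq 1$, we have $T_s\bigl(C(\omega)\mathbf 1_{\cK^{\langle M\rangle}}\bigr)\leq C(\omega)$, hence $\int_0^t T_s\bigl(C(\omega)\mathbf 1_{\cK^{\langle M\rangle}}\bigr)(x)\,{\rm d}s\leq C(\omega)\,t\to 0$. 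No kernel estimate is needed for the tail.

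\textbf{The role of \textbf{(W2)} in (iii).}
Contrary to your remark, the paper does use \textbf{(W2)} in (iii): it writes $p_M(s,x,y)=\sum_{y'\in\pi_M^{-1}(y)}p(s,x,y')$, splits the sum at $\cK^{\langle M+1\rangle}$, changes variables to obtain $\int_{\cK^{\langle M+1\rangle}}p(s,x,y')\,V_M^\omega(\pi_M(y'))\,\mu({\rm d}y')$, and invokes \textbf{(W2)} to bound $V_M^\omega\circ\pi_M\leq V_M^\omega$ on $\cK^{\langle M+1\rangle}$, thus reducing to the local-Kato statement (ii); the remaining tail is handled via the kernel bound $\sum_{y'\in\pi_M^{-1}(y)\setminus\cK^{\langle M+1\rangle}}p(s,x,y')\leq cL^{-dM}$. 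Your claim that \textbf{(W2)} is superfluous for the Kato property is defensible, but only if one replaces the paper's decomposition by the symmetric one $p_M(s,x,y)=\sum_{x'\in\pi_M^{-1}(x)}p(s,x',y)$ and splits over preimages of $x$: the finitely many $x'\in\pi_M^{-1}(x)\cap\cK^{\langle M+1\rangle}$ contribute $\int_0^t T_s\bigl(V_M^\omega\mathbf 1_{\cK^{\langle M\rangle}}\bigr)(x')\,{\rm d}s$, which vanish uniformly by (ii), and the remaining $x'$ are absorbed by the same $cL^{-dM}$ kernel estimate. Your sketch only gestures at ``pulling back via the projection formula'' without making this choice of decomposition explicit. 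If you use the formula \eqref{eq:subrefldens} as written (summing over preimages of $y$) without \textbf{(W2)}, you land on the periodized function $V_M^\omega\circ\pi_M$ defined on the whole unbounded fractal, whose global Kato property is not established by (ii); this is exactly the gap \textbf{(W2)} is introduced to close. So either adopt the symmetric decomposition explicitly, or retain \textbf{(W2)} as the paper does.
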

\begin{proof}

(i) It is enough to show that there exists a measurable set $\Omega_0\subset \Omega$ of full measure such that for any $\omega\in\Omega_0$ and $M\in\mathbb Z_+$   condition \eqref{eq:Kato} holds for $ f=V^\omega\mathbf 1_{\mathcal K^{\langle M\rangle}}.$ We have:
\begin{eqnarray*}
V^\omega(x)\mathbf 1_{\mathcal K^{\langle M\rangle}}(x) &=&
\mathbf 1_{\mathcal K^{\langle M\rangle}}(x)\left(\sum_{v\in \cV_0^{\langle M+1 \rangle}} \xi_v(\omega) W(x,v)
+\sum_{v \in \cV_0^{\langle \infty \rangle} \backslash \cV_0^{\langle M+1 \rangle}} \xi_{v}(\omega) W(x,v)
\right).
\end{eqnarray*}
The first sum consists of finite number of terms. In the other sum, we use the the bound from   {\bf (W1)}:\ $W(x, v)\leq a_{v}.$ Consequently,
\[
V^\omega(x)\mathbf 1_{\mathcal K^{\langle M\rangle}}(x) \leq
\mathbf 1_{\mathcal K^{\langle M\rangle}}(x)\left(\sum_{ v \in
 \cV_0^{\langle M+1\rangle} } \xi_v(\omega) W(x,v)
+\sum_{v\in \cV_0^{\langle\infty\rangle}} \xi_v(\omega)a_{v}\right),
\]
and the series $\sum_{v\in \cV_0^{\langle\infty\rangle}} \xi_v(\omega)a_{v}$ is convergent to a finite limit, $\mathbb Q-$almost surely.
It follows:
\begin{eqnarray*}
\int_0^{t} T_s  (V^\omega\mathbf 1_{\mathcal K^{\langle M\rangle}})(x)\,{\rm d}s
&\leq & \sum_{v\in
 \cV_0^{\langle M+1\rangle} } \xi_v(\omega) \int_0^{t}  T_s (W(\cdot,v)\mathbf 1_{\mathcal K^{\langle M\rangle}}(\cdot))(x)\,{\rm d}s + t \sum_{v\in \cV_0^{\langle \infty\rangle}} \xi_v(\omega)a_{v}\\
 &\leq& \sum_{v\in
 \cV_0^{\langle M+1\rangle} } \xi_v(\omega) \sup_{x\in\mathcal K^{\langle\infty\rangle}} \int_0^{t}  T_s (W(\cdot,v)\mathbf 1_{\mathcal K^{\langle M\rangle}}(\cdot))(x)\,{\rm d}s+ t \sum_{v\in \cV_0^{\langle \infty\rangle}} \xi_v(\omega)a_{v}.
\end{eqnarray*}
As $W(\cdot, v)$ was assumed to be in the local Kato class of $X$ for any $v\in \cV_0^{\langle\infty\rangle}$, the supremum in the first term tends to 0 when $t\to 0.$ As there is a finite number of $v'$s involved, the entire  first term tends to 0. The series in second term is $\mathbb{Q}$-a.s. bounded, hence for $t \to 0$ the whole expression converges to zero.

\smallskip

(ii) The proof is identical with that of (i) - we only need to replace
$\xi_v$ with $\xi_{\pi_M(v)}.$

\smallskip

(iii)
  Let now $M\in\mathbb Z_+$ be large enough so that {\bf (W2)} holds. We have, for $x\in\mathcal K^{\langle M\rangle}$:
\begin{eqnarray*}
V_M^\omega(x) &=& \sum_{v\in \cV_0^{\langle M\rangle}}\xi_{v }(\omega)\sum_{v'\in\pi_M^{-1}(v)}W(x,v')\\
&=& \sum_{v\in \cV_0^{\langle M\rangle}}\xi_{v}(\omega)\left(
\sum_{v' \in\pi_M^{-1}(v)\cap\mathcal K^{\langle M+1\rangle}}W(x,v') +
\sum_{v'\in \pi_M^{-1}( v)\setminus\mathcal K^{\langle M+1\rangle}}W(x,v')
\right).
\end{eqnarray*}

By the definition of the operators $T_t^M$ we have

\begin{eqnarray*}
 T_t^M V_M^{\omega}(x) & = &\int_{\mathcal{K}^{\langle M \rangle}}  p_M (s,x,y)  V_M^{\omega}(y) \mu({\rm d}y)\\
& = &\int_{\mathcal{K}^{\langle M \rangle}} \left( \sum_{y' \in \pi_M^{-1}(y)}  p (s,x,y')  \right) V_M^{\omega}(y) \mu({\rm d}y)\\
&=&\int_{\mathcal{K}^{\langle M \rangle}} \left( \sum_{y' \in \pi_M^{-1}(y) \cap \mathcal{K}^{\langle M+1 \rangle}}  p (s,x,y')  + \sum_{y' \in \pi_M^{-1}(y) \backslash \mathcal{K}^{\langle M+1 \rangle}}  p (s,x,y') \right) V_M^{\omega}(y) \mu({\rm d}y)\\
&=& \int_{\mathcal K^{\langle M+1\rangle}} p(s,x,y) V_M(\pi_M(y))\mu({\rm d} y)  + \int_{\mathcal{K}^{\langle M \rangle}} \sum_{y' \in \pi_M^{-1}(y) \backslash \mathcal{K}^{\langle M+1 \rangle}}  p (s,x,y')  V_M^{\omega}(y) \mu({\rm d}y)
\\
&\leq&  T_s\left(\mathbf{1}_{\mathcal{K}^{\langle M+1\rangle}} V_M \right)(x)  + \int_{\mathcal{K}^{\langle M \rangle}} \sum_{y' \in \pi_M^{-1}(y) \backslash \mathcal{K}^{\langle M+1 \rangle}}  p (s,x,y') V_M^{\omega}(y) \mu({\rm d}y).
\end{eqnarray*}
The inequality between the two last lines comes from {\bf (W2)}.
Using Tonelli's theorem and estimates \cite[Lemma 3.4]{bib:MO} we have
\begin{equation*}
\sum_{y' \in \pi_M^{-1}(y) \backslash \mathcal{K}^{\langle M+1 \rangle}}  p (s,x,y')  = \int_0^\infty \sum_{y' \in \pi_M^{-1}(y) \backslash \mathcal{K}^{\langle M+1 \rangle}}  g (u,x,y')  \eta_s({\rm d}u) \leq c L^{ -d	 M},
\end{equation*}
what gives us
\begin{align*}
&\sup_{x \in \mathcal{K}^{\langle \infty \rangle}} \int_0^t  T_s^M V_M^{\omega}(x)  {\rm d}s\\
 &\leq \sup_{x \in \mathcal{K}^{\langle \infty \rangle}} \int_0^t  T_s\left(\mathbf{1}_{\mathcal{K}^{\langle M+1\rangle}} V_M^{\omega} \right)(x) {\rm d}s
+ \sup_{x \in \mathcal{K}^{\langle \infty \rangle}} \int_0^t \int_{\mathcal{K}^{\langle M \rangle}} \sum_{y' \in \pi_M^{-1}(y) \backslash \mathcal{K}^{\langle M+1 \rangle}}  p (s,x,y') V_M^{\omega}(y) \mu({\rm d}y) {\rm d}s\\
&\leq \sup_{x \in \mathcal{K}^{\langle \infty \rangle}} \int_0^t  T_s\left(\mathbf{1}_{\mathcal{K}^{\langle M+1\rangle}} V_M^{\omega} \right)(x) {\rm d}s
+ ctL^{ -d M} \int_{\mathcal{K}^{\langle M \rangle}} V_M^{\omega}(y) \mu({\rm d}y).
\end{align*}
The uniform convergence of the first term to zero follows from (ii). Moreover, (ii) implies also that  $V_M^{\omega} \in L^1_{loc}(\mathcal K^{\langle \infty\rangle},\mu)$. Hence the integral in the second term is bounded and the whole expression tends to zero as $t \to 0$.

\end{proof}

To get the existence of the Integrated Density of States, we need
yet another technical assumption  on the single-site potential $W$.
We will be assuming that

\medskip

\begin{itemize}
\item[{\bf (W3)}] \hskip 1cm $\displaystyle\sum_{M=1}^{\infty} \sup_{x \in \cK^{\langle \infty \rangle}}  \sum_{v \in \cV_0^{\langle \infty \rangle} \backslash \mathcal{C}_{\lfloor M/4 \rfloor}(x)} W(x,v) < \infty. $
\end{itemize}

\medskip
\noindent

We now give two examples of single-site potentials that satisfy {\bf (W1)-(W3)}.
\begin{example}
\label{ex1}
For $x \in \cK^{\langle \infty \rangle}$, $y \in \cV_0^{\langle \infty \rangle}$ let $f(x,y) = \min \{M \in \mathbb{N}: \exists \Delta_M \ x,y \in \Delta_M \}$.
We set
\begin{equation*}
W(x,y) = \left\{
\begin{array}{ll}
\varphi\left(f(x,y) \right), & x \notin \cV_0^{\langle \infty \rangle}\\
0, & x \in \cV_0^{\langle \infty \rangle}
\end{array}
\right.
\end{equation*}
where $\varphi: \mathbb{N} \to [0,\infty)$ is a nonincreasing function and there exists $c>1$ such that $\varphi(m) \leq N^{-cm}$ for $m \in  \mathbb{N}$.

We first show that { \bf (W2)} holds.
Let $M \geq 0$. If $\pi_M(x) \neq \pi_{M+1}(x)$, then $\pi_{M+1}(x)$ is in different $M$-complex than $\cK^{\langle M \rangle}$ and for each $m \leq M$ and $v \in \cV_{0}^{\langle \infty \rangle}$ there is $\# \{v' \in \pi_M^{-1}(\pi_M(v)) : f(\pi_{M+1}(x),v') = m\} = \# \{v' \in \pi_M^{-1}(\pi_M(v)) : f(\pi_{M}(x),v') = m\}$, because the internal structure of each $M$-complex is the same and $\pi_{M}(\pi_{M+1}(x)) = \pi_{M}(x)$ (see Proposition 3.3 in \cite{bib:KOP}).

Analogously, $\# \{v' \in \pi_M^{-1}(\pi_M(v)) : f(\pi_{M+1}(x),v') = M+1\} = \# \cV_{0}^{\langle M+1 \rangle} - \# \cV_{0}^{\langle M \rangle}  =\# \{v' \in \pi_M^{-1}(\pi_M(v)) : f(\pi_{M}(x),v') = M+1\}$, as in each set there are points $v' \in \cK^{\langle M+1 \rangle}$ which are not included in one $M$-complex (more precisely, not included in $\Delta_{M} (\pi_M(x))$ in the former case and $\Delta_{M} (\pi_{M+1}(x))$ in the latter).

For $m>M+1$ we have $\{v' \in \pi_M^{-1}(\pi_M(v)) : f(\pi_{M+1}(x),v') = m\} =  \{v' \in \pi_M^{-1}(\pi_M(v)) : f(\pi_{M+1}(x),v') = m\}$.

Summarizing, both sums in \eqref{eq:a2cond} have exactly the same number of terms $\varphi(m)$ for each $m$;  in particular, we have equality in \eqref{eq:a2cond} for all $M\geq 0$. Thus {\bf (W2)} holds.

To verify {\bf (W3)}, observe that if $x\in \cK^{\langle \infty \rangle} \backslash \cV_0^{\langle \infty \rangle}$, i.e.\ it is not a vertex of a $0$-complex, then it is an element of exactly one $0$-complex, denoted by $\Delta_0(x)$, which has $k$ vertices. There are $N-1$ $0$-complexes in $\Delta_1(x)$ different than $\Delta_0(x)$. In general, there are $N^{m-1}(N-1)$ $0$-complexes in $\Delta_m(x) \backslash \Delta_{m-1}(x)$, each with $k$ vertices. For $v \in \cV_0^{\langle \infty \rangle} \cap \left(\Delta_m(x) \backslash \Delta_{m-1}(x)\right)$ we have $f(x,v)=m$.

It means that for $m \geq 1$ there are no more than $N^{m-1}(N-1)k$ points $v \in \cV_0^{\langle \infty \rangle}$ such that $f(x,v) = m$, because some vertices may belong to more than one $0$-complex.

We can estimate
\begin{align*}
\sum_{v \in \cV_0^{\langle \infty \rangle} \backslash \mathcal{C}_{\lfloor M/4 \rfloor}(x)} W(x,v) & \leq \sum_{m=\lfloor M/4 \rfloor +1}^{\infty} k(N-1)N^{m-1} \varphi(m) \\
& \leq \sum_{m=\lfloor M/4 \rfloor +1}^{\infty} k(N-1)N^{m-1} N^{-cm} \\
& \leq \frac{k(N-1)}{N} \sum_{m=\lfloor M/4 \rfloor +1}^{\infty} N^{-(c-1)m} \\
& = \frac{k(N-1)}{N^c-N} N^{-(c-1)\lfloor M/4 \rfloor},
\end{align*}
which is a term of convergent series. Hence { \bf (W3)} is satisfied.

\textbf{(W1)} follows similarly. For a vertex $v \in \cV_0^{\langle \infty \rangle}$ we assign a value $a_v$ as follows
\begin{equation*}
a_v = \begin{cases} 1, & \text{ if } v \in \cK^{\langle 0 \rangle}\\
N^{-cm}, & \text{ if } v \in \cK^{\langle m \rangle} \backslash \cK^{\langle m-1 \rangle} \text{ for } m\geq 1\\
\end{cases}
\end{equation*}
As before, for $m\geq 1$ there are no more than $N^{m-1}(N-1) k$ points $v \in \cV_0^{\langle \infty \rangle}$ such that $a_v = N^{-cm}$. We can calculate
\begin{equation*}
\sum_v a_v \leq k\cdot 1+ \sum_{m=1}^{\infty} N^{m-1}(N-1) k \cdot N^{-cm} = k + \frac{k}{N} \sum_{m=1}^{\infty} N^{-m(c-1)}
\end{equation*}
what gives us a convergent geometric series since $c>1$.

Let now $M \in \mathbb{Z}, M \geq 0$ and $x \in \cK^{\langle M \rangle}$, $v \notin \cK^{\langle M+1 \rangle}$.

If $x \in \cV_0^{\langle \infty \rangle}$, then $W(x,v) = 0$. Otherwise, let $m\geq M+2$ be the smallest integer such that $v \in \cK^{\langle m \rangle}$. We have $f(x,v) =m$ and
\begin{equation*}
W(x,v) = \varphi (f(x,v)) = \varphi(m) \leq N^{-cm} = a_v,
\end{equation*}
therefore the condition in \textbf{(W1)} is satisfied.

\end{example}

The next example of a single site potential with non-zero values at vertices leads to the strict inequality in condition {\bf (A2)}, for some points.

\begin{example}
\label{ex2}
Let $\cK^{\langle \infty \rangle}$ be the unbounded Sierpi\'nski gasket.
For $x \in \cK^{\langle \infty \rangle}, v \in \cV_0^{\langle \infty \rangle}$ let $f(x,v) = \min \{M \in \mathbb{N}: \exists \Delta_M \ x,v \in \Delta_M \}$ and
\begin{equation*}
W(x,v) = 4^{-f(x,v)}.
\end{equation*}
Then { \bf (W1)-(W3)} hold. In particular, one has the strict inequality in \eqref{eq:a2cond} for $x \in \cV_{M}^{\langle \infty \rangle}$ such that $\pi_M(x) \neq \pi_{M+1}(x)$.
We leave the verification of { \bf (W1)-(W3)} to the reader - the calculations are elementary, but arduous.
\end{example}

\subsection{Random Schr\"{o}dinger semigroups}\label{sec:RSS}

Throughout  the rest of this paper   we assume that $X = (X_t)_{t \geq 0}$ is a subordinate Brownian motion introduced above, and  that the assumptions \textbf{(B)},  \textbf{(Q1)} and \textbf{(W1)}-\textbf{(W3)} hold. 

Given a nonnegative alloy-type potential $V^{\omega}$ from the respective Kato class, we define the random Feynman--Kac semigroup $\big\{T^{\omega}_t: t \geq 0\big\}$ of the subordinate Brownian motion $X$:
$$
 T^{V^\omega}_t f(x)  =  \mathbf{E}^x \left[{\rm e}^{-\int_0^t V^{\omega}(X_s) {\rm d}s}f(X_t) \right], \quad f \in L^2(\cK^{\langle \infty\rangle}, \mu), \ \ t>0.
$$
This semigroup is strongly continuous on $L^2(\cK^{\langle \infty\rangle}, \mu)$ and it consists of  self-adjoint  operators.
Its $L^2$-generator can be computed to be $-H^{\omega}$, where $H^{\omega} =  \phi(-\mathcal L)  + V^{\omega}$ is the random Schr\"odinger operator based on generator of the subordinate Brownian motion $X$ on $\cK^{\langle \infty\rangle}$. Recall that $\phi$ is the Laplace exponent of the subordinator $S$, introduced in Section \ref{sec:subordinate}, and $\mathcal L$ is the generator of the free Brownian motion on USNF (the fractal Laplacian).  Our standard reference to  Schr\"odinger operators based on the generators of Feller processes and the corresponding Feynman--Kac semigroups is the monograph \cite{bib:DC} by Demuth and van Casteren (see also \cite[Chapters 3.2-3.3]{bib:CZ}).

We  also  define random Feynman--Kac semigroups $\big\{T_t^{D,M, V^\omega}: t\geq 0\big\}$ and $\big\{T_t^{N,M, V^\omega}: t\geq 0\big\}$ for the process $X$ \emph{killed upon exiting the complex} $\cK^{\langle M\rangle}$ and for the \emph{reflected} processes $X^M$ evolving in $\cK^{\langle M\rangle}$, respectively. More precisely, we let
\begin{equation*}
T_t^{D,M, V^\omega} f(x) = \mathbf{E}^x \left[{\rm e}^{-\int_0^t V^{\omega}(X_s) {\rm d}s}f(X_t); t<\tau_{\cK^{\langle M\rangle}} \right], \quad f \in L^2(\cK^{\langle M\rangle}, \mu), \ M \in \mathbb{Z}_+, \ t>0;
\end{equation*}
\begin{equation*}
T_t^{N,M, V^\omega} f(x) = \mathbf{E}_M^x \left[{\rm e}^{-\int_0^t V^{\omega}(X^M_s) {\rm d}s}f(X^M_t) \right], \quad f \in L^2(\cK^{\langle M\rangle}, \mu), \ M \in \mathbb{Z}_+, \ t>0.
\end{equation*}
Here $\tau_{\cK^{\langle M\rangle}} = \inf \{t: X_t \notin \cK^{\langle M\rangle} \}$ denotes the first exit time of the process from the set $\cK^{\langle M\rangle}$.
Also, denote by $A^{D,M, V^\omega}$ and $A^{N,M, V^\omega}$ the $L^2$-generators of these semigroups, respectively. As mentioned above, killing and reflecting the process correspond to imposing the \emph{Dirichlet} and \emph{Neumann conditions} on the generator of this process, respectively. Therefore, the (positive definite) `finite volume' operators
$$H_{M}^{D,  V^\omega}  := - A^{D,M, V^\omega} \qquad \text{and} \qquad H_{M}^{N, V^\omega} := - A^{N,M, V^\omega}$$
can be seen as the \emph{generalized Schr\"odinger operators based on the Dirichlet and Neumann generators} of the initial subordinate Brownian motion $X$.

The operators $T_t^{D,M,V^\omega}$ and $T_t^{N,M, V^\omega}$, $t>0$, are of integral type: for every $t>0$ there exist symmetric and bounded kernels $u_{M,\omega}^D(t,x,y)$ and $u_{M,\omega}^N(t,x,y)$ such that
\begin{equation*}
T_t^{D,M, V^\omega} f(x) = \int_{\cK^{\langle M \rangle}} u_{M,\omega}^D(t,x,y) f(y) \mu({\rm d}y), \quad f \in L^2(\cK^{\langle M\rangle}, \mu)
\end{equation*}
and
\begin{equation*}
T_t^{N,M, V^\omega} f(x) = \int_{\cK^{\langle M \rangle}} u_{M,\omega}^N(t,x,y) f(y) \mu({\rm d}y), \quad f \in L^2(\cK^{\langle M\rangle}, \mu).
\end{equation*}
Due to \eqref{eq:bridge} and \eqref{eq:Mbridge}, these kernels have the following bridge representations
\begin{equation*}
u_{M,\omega}^D(t,x,y) = p(t,x,y) \, \mathbf{E}_{t}^{x,y} \left[{\rm e}^{-\int_0^t V^{\omega}(X_s) {\rm d}s}; t<\tau_{\cK^{\langle M\rangle}} \right]
\end{equation*}
and
\begin{equation*}
u_{M,\omega}^N(t,x,y) = p_M(t,x,y) \,  \mathbf{E}_{M,t}^{x,y} \left[{\rm e}^{-\int_0^t V^{\omega}(X^M_s) {\rm d}s}\right].
\end{equation*}
By the argument already used for the semigroups of free processes,  the operators $T_t^{D,M, V^\omega}$, $T_t^{N,M, V^\omega}$, $t>0$, are Hilbert--Schmidt. Therefore, the spectra of $H_{M}^{D,  V^\omega}$ and $H_{M}^{N,  V^\omega}$ consist of isolated eigenvalues of finite multiplicity such that
$$0 \leq\lambda_1^{D,M, V^\omega} < \lambda_2^{D,M, V^\omega} \leq  \lambda_3^{D,M, V^\omega} \leq \ldots \to \infty$$
and
$$0 \leq \lambda_1^{N,M, V^\omega} < \lambda_2^{N,M, V^\omega} \leq \lambda_2^{N,M, V^\omega} \leq \ldots \to \infty,$$
respectively.

We consider the random empirical measures on $[0,\infty)$ built on spectra of $H_{M}^{D,  V^\omega}$ and $H_{M}^{N,  V^\omega}$, normalized by $\mu(\cK^{\langle M\rangle})$:

\begin{equation}
\label{eq:lmd}
\mathcal N_{M}^{D,  V^\omega} := \frac{1}{\mu\left(\cK^{\langle M\rangle}\right)} \sum_{n=1}^{\infty} \delta_{\lambda_n^{D,M, V^\omega}}
\end{equation}
and
\begin{equation}
\label{eq:lmn}
\mathcal N_{M}^{N,  V^\omega} := \frac{1}{\mu\left(\cK^{\langle M\rangle}\right)} \sum_{n=1}^{\infty} \delta_{\lambda_n^{N,M, V^\omega}} \ .
\end{equation}

Our aim is to prove that the random measures $\mathcal N_{M}^{D,  V^\omega}$ and $\mathcal N_{M}^{N,  V^\omega}$ converge vaguely to a common limit $\mathcal N$, being a nonrandom measure on $[0,\infty)$. This measure will be called the integrated density of states.

\section{Existence of the density of states}\label{sec:existence}

\begin{theorem} \label{thm:IDS}
Let $\cK^{\langle \infty \rangle}$ be an USNF with the GLP and let the assumptions \textup{\textbf{(B)}},  \textup{\textbf{(Q1)}} and \textup{\textbf{(W1)}}-\textup{\textbf{(W3)}} hold. 
Then the random measures $\mathcal N_{M}^{D,  V^\omega}$ and $\mathcal N_{M}^{N,  V^\omega}$ are $\mathbb{Q}$-almost surely
vaguely convergent as $M \to \infty$ to a common nonrandom limit measure $\mathcal N$ on $[0,\infty)$.
\end{theorem}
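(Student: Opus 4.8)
The plan is to pass to Laplace transforms and reduce everything to the convergence of the normalized partition functions $\ell^{D}_M(t,\omega):=\mu(\cK^{\langle M\rangle})^{-1}\tr T^{D,M,V^\omega}_t=\int_{[0,\infty)}e^{-t\lambda}\,\cN^{D,V^\omega}_M({\rm d}\lambda)$ and $\ell^{N}_M(t,\omega):=\mu(\cK^{\langle M\rangle})^{-1}\tr T^{N,M,V^\omega}_t$, for fixed $t>0$. These are finite (the operators $T^{D,M,V^\omega}_{t/2},T^{N,M,V^\omega}_{t/2}$ are Hilbert--Schmidt, so the semigroup operators at time $t$ are trace class) and, by Lemma \ref{lem:regularity_of_p}(b) together with $V^\omega\ge0$, bounded uniformly in all large $M$. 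I would show that for each fixed $t>0$ both $\ell^{D}_M(t,\cdot)$ and $\ell^{N}_M(t,\cdot)$ converge $\qpr$-almost surely to one and the same deterministic number $\ell(t)$; running this along a countable dense set of $t$'s and using monotonicity of Laplace transforms in $t$ produces a single full-measure event on which the convergence holds for all $t>0$. The limit $\ell$ is then a finite, completely monotone function (a pointwise limit of Laplace transforms of positive measures), so by Bernstein's theorem $\ell(t)=\int_{[0,\infty)}e^{-t\lambda}\,\cN({\rm d}\lambda)$ for a unique Radon measure $\cN$ on $[0,\infty)$, and the continuity theorem for Laplace transforms upgrades this to $\qpr$-a.s.\ vague convergence $\cN^{D,V^\omega}_M,\cN^{N,V^\omega}_M\to\cN$.

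Next I would invoke Dirichlet--Neumann bracketing and boundary negligibility. Since the form domain of the Dirichlet operator is contained in that of the Neumann one, min--max gives $\lambda^{N,M,V^\omega}_n\le\lambda^{D,M,V^\omega}_n$, hence $0\le\ell^{D}_M\le\ell^{N}_M$ and $u^{N}_{M,\omega}(t,x,x)\ge u^{D}_{M,\omega}(t,x,x)$ pointwise. Using Lemma \ref{lem:bridges}(a) to unfold the reflected bridge and the crude bound $e^{-\int_0^t V^\omega}\le1$, the difference $u^{N}_{M,\omega}(t,x,x)-u^{D}_{M,\omega}(t,x,x)$ is dominated by the potential-free probability that the unfolded bridge from $x$ to a point of $\pi_M^{-1}(x)$ leaves $\cK^{\langle M\rangle}$ within time $t$; by the sub-Gaussian bounds \eqref{eq:kum} and the subordinator tail estimates \eqref{eq:sub_est}--\eqref{eq:sub} this is negligible outside a ``thickened boundary'' of $\cK^{\langle M\rangle}$ of vanishing relative $\mu$-measure, so $\ell^{N}_M(t,\omega)-\ell^{D}_M(t,\omega)\to0$ uniformly in $\omega$, and it remains to handle $\ell^{N}_M$. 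I would then replace $V^\omega$ by the periodized potential $V^\omega_M$ of \eqref{eq:periodized_pot}: partitioning $\cV^{\langle\infty\rangle}_0$ into $\pi_M$-fibres over $\cV^{\langle M\rangle}_0$ gives, on $\cK^{\langle M\rangle}$, $V^\omega-V^\omega_M=\sum_{v\in\cV^{\langle M\rangle}_0}\sum_{v'\in\pi_M^{-1}(v)\setminus\{v\}}(\xi_{v'}-\xi_v)W(\cdot,v')$, where every $v'$ in the sum lies outside $\cK^{\langle M\rangle}$; assumptions {\bf (W1)} and {\bf (W3)} make this small in $L^1(\mu)$ over $\cK^{\langle M\rangle}$ apart from a boundary layer of negligible relative volume, with the error at scale $M$ summable in $M$. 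Combining this with $|e^{-a}-e^{-b}|\le|a-b|$, the bridge formula for $u^{N}_{M,\omega}$, Tonelli, and a Borel--Cantelli argument (using {\bf (Q1)} to pass from moments to the $\qpr$-a.s.\ statement), one gets $\mu(\cK^{\langle M\rangle})^{-1}\big|\tr T^{N,M,V^\omega}_t-\tr T^{N,M,V^\omega_M}_t\big|\to0$ $\qpr$-a.s. Hence it suffices to treat $\tilde\ell_M(t,\omega):=\mu(\cK^{\langle M\rangle})^{-1}\tr T^{N,M,V^\omega_M}_t$, which depends only on the finite i.i.d.\ family $\{\xi_v:v\in\cV^{\langle M\rangle}_0\}$.

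The final and most delicate step exploits the self-similar structure. The complex $\cK^{\langle M+1\rangle}$ is the union of the $N$ congruent $M$-complexes $\Delta_{M,1},\dots,\Delta_{M,N}$ (see Definition \ref{def:Mcomplex}), which meet only along a lower-dimensional set of shared vertices, and $\mu(\cK^{\langle M+1\rangle})=N\mu(\cK^{\langle M\rangle})$. Bracketing between levels $M$ and $M+1$ gives
\[
\sum_{i=1}^{N}\tr T^{D,\Delta_{M,i},V^\omega_{M+1}}_t\ \le\ \tr T^{N,M+1,V^\omega_{M+1}}_t\ \le\ \sum_{i=1}^{N}\tr T^{N,\Delta_{M,i},V^\omega_{M+1}}_t,
\]
and by the boundary/periodization estimates above (applied at level $M$) both outer sums coincide, up to a summable error, with $\sum_{i=1}^{N}\tr T^{N,\Delta_{M,i},V^{\omega,(i)}_M}_t$, where $V^{\omega,(i)}_M$ is the periodization carried onto $\Delta_{M,i}$. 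By the GLP each $\Delta_{M,i}$ is a rigid, label-preserving copy of $\cK^{\langle M\rangle}$, and Lemma \ref{lem:bridges}(b) shows the unfolded bridge traces over $\Delta_{M,i}$ and over $\cK^{\langle M\rangle}$ agree; since the $\xi_v$ are i.i.d., each summand has the law of $\tr T^{N,M,V^\omega_M}_t$ and distinct summands are independent up to the negligible shared-vertex overlap. This yields an approximate recursion $\tilde\ell_{M+1}(t,\cdot)\approx\tfrac1N\sum_{i=1}^{N}\tilde\ell^{(i)}_M(t,\cdot)+\varepsilon_M$, with i.i.d.\ copies $\tilde\ell^{(i)}_M$ of $\tilde\ell_M$ and a summable error $\varepsilon_M$. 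Since $N\ge2$, taking $\qpr$-expectations shows $\ex_\qpr[\tilde\ell_M(t,\cdot)]$ is Cauchy, hence convergent to some $\ell(t)$, while estimating second moments gives $\var_\qpr(\tilde\ell_M(t,\cdot))\to0$; Chebyshev plus Borel--Cantelli then promote this to $\qpr$-a.s.\ convergence $\tilde\ell_M(t,\cdot)\to\ell(t)$. By the bracketing of the second step the Dirichlet and Neumann traces share this limit, and the first step concludes the proof.

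The main obstacle I expect is precisely this last step: making rigorous the decomposition of the level-$(M+1)$ periodized trace into ``$N$ approximately i.i.d.\ copies'' of the level-$M$ trace while keeping the accumulated error summable along the hierarchy. This requires simultaneous and uniform control of three error sources — the Dirichlet--Neumann boundary defect (through \eqref{eq:kum} and \eqref{eq:sub_est}--\eqref{eq:sub}), the periodization defects $V^\omega-V^\omega_M$ and $V^\omega_{M+1}-V^{\omega,(i)}_M$ (through {\bf (W1)}--{\bf (W3)} and {\bf (Q1)}), and the overlap of the $\Delta_{M,i}$ along shared vertices. It is exactly here that the Good Labeling Property and the folding projections $\pi_M$ of \cite{bib:KOP}, via Lemma \ref{lem:bridges}, are indispensable: they guarantee that the $N$ sub-blocks are genuine labelled copies of one another and that their randomness is drawn from essentially disjoint blocks of the i.i.d.\ array $\{\xi_v\}$. (The non-locality of $X$ forces the boundary estimates to contend with arbitrarily long jumps, which is where the subordinator bounds \eqref{eq:sub_est}--\eqref{eq:sub} do the work; this is a secondary but non-trivial point.)
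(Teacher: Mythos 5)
Your outer shell matches the paper's: pass to the Laplace transforms $\Lambda^{D,V^\omega}_M, \Lambda^{N,V^\omega}_M$, replace $V^\omega$ by the $\pi_M$-periodized $V^\omega_M$ using \textbf{(W1)}--\textbf{(W3)}, bound the Dirichlet--Neumann trace gap and the periodization defect by boundary-layer estimates, and finally promote convergence of means to $\mathbb{Q}$-a.s.\ convergence via a variance estimate, Chebyshev, and Borel--Cantelli. Where you genuinely diverge is at the heart of the matter, namely how one gets the expectations $\mathbb{E}_{\mathbb{Q}}\Lambda^{N,V^\omega_M}_M(t)$ to converge. The paper's key idea is Lemma~\ref{lem:monoto}: for a fixed path of $X$, the map $M\mapsto\mathbb{E}_{\mathbb{Q}}\,{\rm e}^{-\int_0^t V^\omega_M(\pi_M(X_s)){\rm d}s}$ is \emph{monotone decreasing}, proved by a generalized H\"older inequality applied to the i.i.d.\ array $\{\xi_v\}$ together with the bridge identities of Lemma~\ref{lem:bridges}. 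This is a purely probabilistic statement about annealed Feynman--Kac functionals; it yields monotone convergence of $\mathbb{E}_{\mathbb{Q}}\Lambda^{N,V^\omega_M}_M(t)$ with no spectral manipulations at all, which is precisely why the argument carries over painlessly to the non-local kinetic terms $\phi(-\mathcal L)$. You replace this with a Kirsch--Simon-style self-similar Dirichlet--Neumann bracketing over the $N$ sub-blocks $\Delta_{M,i}\subset\cK^{\langle M+1\rangle}$ and an approximate i.i.d.\ recursion $\tilde\ell_{M+1}\approx N^{-1}\sum_i\tilde\ell^{(i)}_M+\varepsilon_M$, then conclude by a Cauchy argument on first moments and a contraction on variances. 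That is a legitimate and more ``spectral'' route, closer to the classical $\mathbb{R}^d$ literature; it gains an explicit self-similar recursion at the cost of losing the monotonicity that the paper's argument buys for free.

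Two points in your plan deserve to be spelled out rather than taken for granted. First, the bracketing inequality
\begin{equation*}
\operatorname{Tr}T^{N,M+1,V^\omega_{M+1}}_t\ \le\ \sum_{i=1}^N\operatorname{Tr}T^{N,\Delta_{M,i},V^\omega_{M+1}}_t
\end{equation*}
is standard for the local resistance form (the Neumann form on $\cK^{\langle M+1\rangle}$ is additive over the $\Delta_{M,i}$'s, with a smaller form domain than the direct sum), but for $\phi(-\mathcal L_{M+1})$ versus $\bigoplus_i\phi(-\mathcal L_{\Delta_{M,i}})$ you are implicitly invoking the fact that an operator-monotone function (here a Bernstein function $\phi$) carries a form inequality between unbounded non-negative self-adjoint operators, with form-domain inclusion, to a form inequality between their images. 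That is true, but it is a non-trivial functional-analytic input that your proposal uses silently; the paper's probabilistic route avoids it entirely. Second, your recursion requires the restrictions of $V^\omega_{M+1}$ to the $\Delta_{M,i}$'s to be, up to a summable correction, independent level-$M$ periodizations. Because of \textbf{(W3)} these restrictions still depend (with small weight) on $\xi_v$ for $v$ in \emph{every} block, and near the shared vertices the distinct $\Delta_{M,i}$'s genuinely compete for the same $\xi_v$'s; showing that both the resulting correlations and the potential mismatch $V^\omega_{M+1}|_{\Delta_{M,i}}-V^{\omega,(i)}_M$ produce a summable $\varepsilon_M$ in both first and second moment is exactly the kind of bookkeeping the paper's Lemmas~\ref{lem:commonlimit} and~\ref{lem:variances} carry out, using the graph metric $d_m$ and the comparison estimates of \cite[Lemma~A.2]{bib:KOP}. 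You flag this yourself as the main obstacle, correctly: it is where the two approaches must do equivalent quantitative work, and neither is ``for free''.
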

\noindent
Here we describe the strategy of the proof, which will be given in more detail in the Appendix.

As explained in Introduction, our proof of Theorem \ref{thm:IDS} follows the general approach from \cite{bib:KaPP2}.
We shall deduce the convergence of the measures $\mathcal N_{M}^{D,  V^\omega}$ and $\mathcal N_{M}^{N,  V^\omega}$ from the convergence of their Laplace transforms
$$
\Lambda_{M}^{D,  V^\omega} (t) = \int_0^{\infty} {\rm e}^{-\lambda t} \mathcal N_{M}^{D,  V^\omega} ({\rm d}\lambda) \qquad \text{and} \qquad \Lambda_{M}^{N,  V^\omega} (t) = \int_0^{\infty} {\rm e}^{-\lambda t} \mathcal N_{M}^{N,  V^\omega} ({\rm d}\lambda), \qquad t >0.
$$
First observe that
\begin{align*}
\Lambda_{M}^{D,  V^\omega} (t) & = \frac{1}{\mu\left(\cK^{\langle M\rangle}\right)} \sum_{n=1}^{\infty} {\rm e}^{-\lambda_n^{D,M, V^\omega}t} =  \frac{1}{\mu\left(\cK^{\langle M\rangle}\right)}	 \textrm{Tr}\, T_t^{D,M, V^\omega} \\
& =  \frac{1}{\mu\left(\cK^{\langle M\rangle}\right)} \int_{\cK^{\langle M\rangle}} u_{M,\omega}^D(t,x,x) \, \mu({\rm d}x) \\
& = \frac{1}{\mu\left(\cK^{\langle M\rangle}\right)} \int_{\cK^{\langle M\rangle}} p(t,x,x) \mathbf{E}_{t}^{x,x} \left[{\rm e}^{-\int_0^t V^{\omega}(X_s){\rm d}s}; t<\tau_{\cK^{\langle M\rangle}} \right] \mu({\rm d}x)
\end{align*}
and
\begin{align*}
\Lambda_{M}^{N,  V^\omega} (t) & = \frac{1}{\mu\left(\cK^{\langle M\rangle}\right)} \sum_{n=1}^{\infty} {\rm e}^{-\lambda_n^{N,M, V^\omega}t} =  \frac{1}{\mu\left(\cK^{\langle M\rangle}\right)}	 \textrm{Tr}\, T_t^{N,M, V^\omega} \\
& =  \frac{1}{\mu\left(\cK^{\langle M\rangle}\right)} \int_{\cK^{\langle M\rangle}} u_{M,\omega}^N(t,x,x) \, \mu({\rm d}x) \\
 & =  \frac{1}{\mu\left(\cK^{\langle M\rangle}\right)} \int_{\cK^{\langle M\rangle}} p_M(t,x,x) \mathbf{E}_{M,t}^{x,x} \left[{\rm e}^{-\int_0^t V^{\omega}(X^M_s){\rm d}s}\right] \mu({\rm d}x) .
\end{align*}

Unfortunately, in general expectations of these Laplace transforms  need not be monotone in $M$. 
Therefore the convergence of $\mathcal N_{M}^{D,  V^\omega}$ and $\mathcal N_{M}^{N,  V^\omega}$ will not be analyzed directly -- instead, we replace the initial alloy-type potential $V^{\omega}$ with the periodized one $ V_M^\omega$ (given by \eqref{eq:periodized_pot}), and here we will be able to prove monotonicity, and consequently convergence.
Indeed, once the path of the initial subordinate Brownian motion $X$ on $\cK^{\langle \infty\rangle}$ and $t>0$ are fixed, we show that the expectations $\mathbb{E}_{\mathbb{Q}} {\rm e}^{-\int_0^t V^{\omega}_M(\pi_M(X_s)){\rm d}s}$ are monotone in $M$ (by $\mathbb{E}_{\mathbb{Q}}$ we denote the expected value with respect to the probability measure $\mathbb{Q}$, see Definition \ref{def:alloy}).

This monotonicity argument will be then used to prove that the expectations $ \mathbb{E}_{\mathbb{Q}} \Lambda_{M}^{N,  V_M^\omega}(t)$ converge as $M\to\infty$, for every fixed $t>0$, to a finite limit $\Lambda(t)$ and that the  convergence  is monotone (decreasing). This approach hinges on the following lemma.

\begin{lemma} 
\label{lem:monoto}
 Let $\cK^{\langle \infty \rangle}$ be an USNF with the GLP and let the assumptions \textup{\textbf{(Q1)}} and \textup{\textbf{(W1)}}-\textup{\textbf{(W2)}} hold.    For every $t>0$ and  sufficiently large $M \in \mathbb{Z}_{+}$ we have
\begin{equation}
\label{eq:monoto}
\mathbb{E}_{\mathbb{Q}} {\rm e}^{-\int_0^t V^{\omega}_{M+1}(\pi_{M+1}(X_s)){\rm d}s} \leq \mathbb{E}_{\mathbb{Q}} {\rm e}^{-\int_0^t V^{\omega}_M(\pi_M(X_s)){\rm d}s} \ .
\end{equation}
\end{lemma}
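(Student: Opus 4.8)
The inequality \eqref{eq:monoto} is purely a statement about the i.i.d.\ family $\{\xi_v\}$, so the plan is to first freeze a trajectory of $X$ and reduce both sides to finite products of the common Laplace transform $L(\lambda):=\mathbb{E}_{\mathbb{Q}}\bigl[e^{-\xi_{v}\lambda}\bigr]$, $\lambda\ge 0$ (which is independent of $v$). Writing out \eqref{eq:periodized_pot} and using Tonelli (everything is nonnegative),
\[
\int_0^t V^{\omega}_M(\pi_M(X_s))\,{\rm d}s=\sum_{v\in\cV_0^{\langle M\rangle}}\xi_v\,a_v,\qquad a_v:=\int_0^t\sum_{v'\in\pi_M^{-1}(v)}W(\pi_M(X_s),v')\,{\rm d}s\ge 0,
\]
and likewise $\int_0^t V^{\omega}_{M+1}(\pi_{M+1}(X_s))\,{\rm d}s=\sum_{w\in\cV_0^{\langle M+1\rangle}}\xi_w\,b_w$ with analogous nonnegative coefficients $b_w$. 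One should note here that $\cV_0^{\langle M\rangle}$ and $\cV_0^{\langle M+1\rangle}$ are finite and that each $a_v,b_w$ is finite for a.e.\ trajectory — this is exactly the a.s.\ finiteness of the periodized potentials together with the local Kato/$L^1_{\rm loc}$ property recorded in the discussion preceding Proposition~\ref{prop:kato} and in Proposition~\ref{prop:kato}(ii). Independence of the $\xi$'s then turns \eqref{eq:monoto} into the deterministic inequality $\prod_{w\in\cV_0^{\langle M+1\rangle}}L(b_w)\le\prod_{v\in\cV_0^{\langle M\rangle}}L(a_v)$.

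Next I would bring in the geometry of the foldings. Since $\pi_M=\pi_M\circ\pi_{M+1}$ on $\cK^{\langle\infty\rangle}$, since $\pi_M$ carries vertices of $0$-complexes to vertices of $0$-complexes, and since $\pi_M$ is the identity on $\cK^{\langle M\rangle}$, one obtains for every $v\in\cV_0^{\langle M\rangle}$ the disjoint fibre decompositions
\[
\pi_M^{-1}(v)=\bigsqcup_{w\in\pi_M^{-1}(v)\cap\cV_0^{\langle M+1\rangle}}\pi_{M+1}^{-1}(w),\qquad
\cV_0^{\langle M+1\rangle}=\bigsqcup_{v\in\cV_0^{\langle M\rangle}}\bigl(\pi_M^{-1}(v)\cap\cV_0^{\langle M+1\rangle}\bigr).
\]
Feeding the first of these into the definition of $a_v$, then invoking assumption \textbf{(W2)} pointwise in $s$ (this is where the restriction to sufficiently large $M$ enters) and integrating over $[0,t]$, gives
\[
a_v\le\int_0^t\sum_{v'\in\pi_M^{-1}(v)}W(\pi_{M+1}(X_s),v')\,{\rm d}s=\sum_{w\in\pi_M^{-1}(v)\cap\cV_0^{\langle M+1\rangle}}b_w .
\]
By the second decomposition it therefore suffices to prove, for each fixed $v\in\cV_0^{\langle M\rangle}$, that $\prod_{w\in\pi_M^{-1}(v)\cap\cV_0^{\langle M+1\rangle}}L(b_w)\le L(a_v)$, and then to multiply over $v$.

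The last ingredient is elementary: $L$ is nonincreasing on $[0,\infty)$ because $\xi_v\ge 0$, and $L$ is super-multiplicative, i.e.\ $L(x_1+\dots+x_k)\ge\prod_{i=1}^k L(x_i)$ for $x_i\ge 0$ — the case $k=2$ is the correlation inequality $\mathbb{E}_{\mathbb{Q}}[e^{-\xi x}e^{-\xi y}]\ge\mathbb{E}_{\mathbb{Q}}[e^{-\xi x}]\,\mathbb{E}_{\mathbb{Q}}[e^{-\xi y}]$ for the two nonincreasing functions $e^{-\xi x},e^{-\xi y}$ of the real variable $\xi$ (equivalently, $\log L$ is convex with $\log L(0)=0$, hence superadditive), and the general case follows by induction. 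Combining this with $a_v\le\sum_w b_w$ yields $L(a_v)\ge L\bigl(\sum_w b_w\bigr)\ge\prod_w L(b_w)$, which is precisely the required bound. I expect the only delicate points to be bookkeeping ones — justifying the two fibre decompositions above from the Good Labeling Property and confirming the finiteness of the $a_v,b_w$ so that the factorization over $\mathbb{Q}$ is legitimate; the analytic core, namely that super-multiplicativity of the Laplace transform absorbs the presence of the extra independent sites carried by $V^{\omega}_{M+1}$, is then immediate.
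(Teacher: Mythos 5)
Your proof is correct and follows essentially the same route as the paper: both rely on the disjoint fibre decompositions $\cV_0^{\langle M+1\rangle}=\bigsqcup_{v}\bigl(\pi_M^{-1}(v)\cap\cV_0^{\langle M+1\rangle}\bigr)$ and $\pi_M^{-1}(v)=\bigsqcup_{w}\pi_{M+1}^{-1}(w)$, on independence of the $\xi$'s, on assumption \textbf{(W2)} to compare the exponents, and ultimately on the super-multiplicativity inequality $L\bigl(\sum_i a_i\bigr)\geq\prod_i L(a_i)$ for the common Laplace transform $L$. The only divergence is in how that last inequality is established---you derive it from the FKG/Chebyshev correlation inequality (equivalently, convexity of $\log L$ plus $\log L(0)=0$), whereas the paper obtains it via the generalized H\"older inequality with exponents $p_i=\bigl(\sum_j a_j\bigr)/a_i$; these are merely two standard proofs of the same elementary fact, so the arguments are substantively identical.
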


\begin{proof}
Every point from $\cV_0^{\langle M+1\rangle}$ is projected via $\pi_M$ onto a point in $\cV_0^{\langle M\rangle}$, so
\begin{equation*}
\cV_0^{\langle M+1\rangle} = \bigcup_{y \in \cV_0^{\langle M\rangle}} \pi_{M}^{-1}(y) \cap \cV_0^{\langle M+1\rangle}
\end{equation*}
and this sum is disjoint. Therefore, for every fixed $t>0$ and the path of the process $X$, we have
\begin{align*}
\mathbb{E}_{\mathbb{Q}} {\rm e}^{-\int_0^t V^{\omega}_{M+1}(\pi_{M+1}(X_s)){\rm d}s}
&= \mathbb{E}_{\mathbb{Q}} \exp \left( -\int_0^t \sum_{y \in \cV_{0}^{\langle M+1\rangle}} \xi_y(\omega)  \sum_{y' \in \pi_{M+1}^{-1}(y)} W\left(\pi_{M+1}(X_s), y' \right) \ {\rm d}s \right)\\
&= \mathbb{E}_{\mathbb{Q}} \exp \left( -\int_0^t \sum_{y \in \cV_{0}^{\langle M \rangle}} \sum_{\widetilde{y} \in \pi_{M}^{-1}(y) \cap \cV_{0}^{\langle M+1 \rangle}} \xi_{\widetilde{y}}(\omega)   \sum_{y' \in \pi_{M+1}^{-1}(\widetilde{y})} W\left(\pi_{M+1}(X_s), y' \right)  \ {\rm d}s \right)\\
&= \mathbb{E}_{\mathbb{Q}} \prod_{y \in \cV_{0}^{\langle M \rangle}} \exp \left( - \sum_{\widetilde{y} \in \pi_{M}^{-1}(y) \cap \cV_{0}^{\langle M+1 \rangle}}   \xi_{\widetilde{y}}(\omega)  \int_0^t \sum_{y' \in \pi_{M+1}^{-1}(\widetilde{y})} W\left(\pi_{M+1}(X_s), y' \right) {\rm d}s \right)\\
& = \prod_{y \in \cV_{0}^{\langle M \rangle}} \mathbb{E}_{\mathbb{Q}} \exp \left( - \sum_{\widetilde{y} \in \pi_{M}^{-1}(y) \cap
\cV_{0}^{\langle M+1 \rangle}}   \xi_{\widetilde{y}}(\omega)  a_{\widetilde{y}} \right) = (\ast)
\end{align*}
with
\begin{equation*}
a_{\widetilde{y}} = \int_0^t \sum_{y' \in \pi_{M+1}^{-1}(\widetilde{y})} W\left(\pi_{M+1}(X_s), y' \right) {\rm d}s,
\end{equation*}
because all $\xi_{\widetilde{y}}$'s are independent random variables. Note that the numbers $a_{\widetilde{y}}$ do not depend on $\omega$.

Fix now $y \in \cV_{0}^{\langle M \rangle}$ and denote $C_y=\left\{\widetilde{y} \in \pi_{M}^{-1}(y) \cap \cV_{0}^{\langle M+1 \rangle}: a_{\widetilde{y}} >0\right\}$.
Suppose first that $C_y \neq \emptyset$. Let us enumerate elements of the set $C_y$ as $\widetilde{y}_i$  for $1\leq i \leq n = \# C_y$.
Then, by using the generalized H\"older inequality
\begin{equation*}
\label{eq:hoelder}
\mathbb{E}_{\mathbb{Q}}\left(|X_1...X_n|\right) \leq \left(\mathbb{E}_{\mathbb{Q}}|X_1|^{p_1}\right)^{\frac{1}{p_1}} ... \left(\mathbb{E}_{\mathbb{Q}}|X_n|^{p_n}\right)^{\frac{1}{p_n}}
\end{equation*}
with $X_i = {\rm e}^{-\xi_{\widetilde{y}_i} a_{\widetilde{y}_i}}$ and $p_i = \frac{1}{a_{\widetilde{y}_i}} \sum_{j=1}^{n} a_{\widetilde{y}_j}$ (clearly, $\frac{1}{p_1} + ...+ \frac{1}{p_n} = 1$), we obtain
\begin{align*}
\mathbb{E}_{\mathbb{Q}} \exp \left( - \sum_{i=1}^{n} \xi_{\widetilde{y}_i}(\omega) a_{\widetilde{y}_i} \right) & = \mathbb{E}_{\mathbb{Q}} \prod_{i=1}^{n} {\rm e}^{- \xi_{\widetilde{y}_i}(\omega) a_{\widetilde{y}_i}} \\ & \leq \prod_{i=1}^{n} \left( \mathbb{E}_{\mathbb{Q}} {\rm e}^{-\xi_{\widetilde{y}_i}(\omega) \sum_{j=1}^{n} a_{\widetilde{y}_j}} \right)^{\frac{a_{\widetilde{y}_i}}{\sum_{j=1}^{n} a_{\widetilde{y}_j}}} = \mathbb{E}_{\mathbb{Q}} {\rm e}^{-\xi_{y}(\omega) \sum_{j=1}^{n} a_{\widetilde{y}_j}}.
\end{align*}
Again, the last equality is a consequence of the fact that all $\xi_{\widetilde{y}_i}$'s are i.i.d.\ random variables. This gives
$$
\mathbb{E}_{\mathbb{Q}} \exp \left( - \sum_{\widetilde{y} \in \pi_{M}^{-1}(y) \cap \cV_{0}^{\langle M+1 \rangle}}   \xi_{\widetilde{y}}(\omega)  a_{\widetilde{y}} \right)
\leq \mathbb{E}_{\mathbb{Q}} \exp \left( - \xi_{y}(\omega) \sum_{\widetilde y \in \pi_{M}^{-1}(y) \cap \cV_{0}^{\langle M+1 \rangle}} a_{\widetilde{y}} \right).
$$
If $C_y = \emptyset$, then the same bound holds trivially. In consequence,
\begin{equation*}
(\ast) \leq \prod_{y \in \cV_{0}^{\langle M \rangle}} \mathbb{E}_{\mathbb{Q}} \exp \left( - \xi_{y}(\omega) \sum_{\widetilde y \in \pi_{M}^{-1}(y) \cap \cV_{0}^{\langle M+1 \rangle}} \int_0^t \sum_{y' \in \pi_{M+1}^{-1}(\widetilde{y})} W\left(\pi_{M+1}(X_s), y' \right) {\rm d}s  \right).
\end{equation*}
Finally, since $\pi_{M} \circ \pi_{M+1} = \pi_M$ (\cite[Proposition 3.3]{bib:KOP}), we have that for every $y \in \cV_0^{\langle M\rangle}$
\begin{equation*}
\pi^{-1}_{M}(y) = \bigcup_{\widetilde{y} \in \pi^{-1}_{M}(y) \cap \cV_0^{\langle M+1 \rangle}} \pi^{-1}_{M+1}(\widetilde{y})
\end{equation*}
and this sum is disjoint. By rearranging the summation in the expression above, we can then get that it is equal to
\begin{align*}
\prod_{y \in \cV_{0}^{\langle M \rangle}}\mathbb{E}_{\mathbb{Q}}  \exp & \left( - \xi_{y}(\omega) \int_{0}^{t} \sum_{y' \in \pi_{M}^{-1}(y)} W\left(\pi_{M+1}(X_s), y' \right) {\rm d}s  \right) \\
&= \mathbb{E}_{\mathbb{Q}} \exp \left( - \sum_{y \in \cV_{0}^{\langle M \rangle}}  \xi_{y}(\omega) \int_{0}^{t} \sum_{y' \in \pi_{M}^{-1}(y)} W\left(\pi_{M+1}(X_s), y' \right) {\rm d}s  \right).
\end{align*}
Applying  \textbf{(W2)},  we conclude that for sufficiently large $M$ the last expectation is less than or equal to
$$
\mathbb{E}_{\mathbb{Q}} \exp \left( - \sum_{y \in \cV_{0}^{\langle M \rangle}}  \xi_{y}(\omega) \int_{0}^{t} \sum_{y' \in \pi_{M}^{-1}(y)} W\left(\pi_{M}(X_s), y' \right) {\rm d}s  \right)
 = \mathbb{E}_{\mathbb{Q}} {\rm e}^{-\int_0^t V^{\omega}_M(\pi_M(X_s)){\rm d}s},
$$
which completes the proof.
\end{proof}

Below we formulate the proposition concerning the monotone convergence of the expected values of $ \mathbb{E}_{\mathbb{Q}} \Lambda_{M}^{N,  V_M^\omega}(t)$. Its proof, together with some accompanying lemmas, can be found in the Appendix.
It is similar to the proof for Poissonian potentials on the Sierpi\'{n}ski gasket from \cite{bib:KaPP2}.

\begin{proposition} \label{thm:convergence_with_star}
Let $\cK^{\langle \infty \rangle}$ be an USNF with the GLP and let the assumptions \textup{\textbf{(B)}},  \textup{\textbf{(Q1)}} and \textup{\textbf{(W1)}}-\textup{\textbf{(W2)}} hold.  Then, for every $t>0$, $ \mathbb{E}_{\mathbb{Q}} \Lambda_{M}^{N,  V_M^\omega}(t)$ decreases to a finite limit $\Lambda(t)$ as $M \to \infty$.
\end{proposition}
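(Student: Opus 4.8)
The plan is to show that the sequence $\big(\mathbb{E}_{\mathbb{Q}}\Lambda_M^{N,V_M^\omega}(t)\big)_M$ is nonincreasing once $M$ is large enough and uniformly bounded; being a sequence of nonnegative numbers, it then converges to a finite limit $\Lambda(t)\ge 0$. Boundedness is easy: since $V_M^\omega\ge 0$ we have $\Lambda_M^{N,V_M^\omega}(t)\le \mu(\cK^{\langle M\rangle})^{-1}\int_{\cK^{\langle M\rangle}}p_M(t,x,x)\,\mu({\rm d}x)$, which by Lemma \ref{lem:regularity_of_p}(b) is at most $c_2\big(t^{-d/\alpha}\vee L^{-dM}\big)$, i.e.\ at most $c_2 t^{-d/\alpha}$ for $M$ large. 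So the real content is the monotonicity.

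To prove it, I would first express everything through the \emph{free} subordinate Brownian motion $X$ on $\cK^{\langle\infty\rangle}$. Put $F_M(X):=\mathbb{E}_{\mathbb{Q}}\,{\rm e}^{-\int_0^t V_M^\omega(\pi_M(X_s))\,{\rm d}s}$, and observe that this functional is measurable with respect to $\sigma(\pi_M(X_s):s\le t)$ --- the property that makes Lemma \ref{lem:bridges} applicable. Using Tonelli's theorem to interchange $\mathbb{E}_{\mathbb{Q}}$ with the bridge expectations (all integrands are nonnegative), and then the identity of Lemma \ref{lem:bridges}(a) taken on the diagonal $x=y$, one obtains for every fixed $t>0$
\[
\mathbb{E}_{\mathbb{Q}}\Lambda_M^{N,V_M^\omega}(t)=\frac{1}{\mu(\cK^{\langle M\rangle})}\int_{\cK^{\langle M\rangle}}\ \sum_{x'\in\pi_M^{-1}(x)}p(t,x,x')\,\mathbf{E}_t^{x,x'}\!\big[F_M(X)\big]\,\mu({\rm d}x),
\]
and the analogous formula with $M$ replaced by $M+1$ and $F_M,\pi_M$ by $F_{M+1},\pi_{M+1}$.

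The main step is then a geometric recasting of the level-$M$ quantity as an integral over $\cK^{\langle M+1\rangle}$. Since $\cK^{\langle M+1\rangle}=\bigcup_{i=1}^N\Delta_{M,i}$ with each restriction $\pi_M|_{\Delta_{M,i}}\colon\Delta_{M,i}\to\cK^{\langle M\rangle}$ a measure-preserving bijection and pairwise overlaps $\mu$-null (so $\mu(\cK^{\langle M+1\rangle})=N\mu(\cK^{\langle M\rangle})$, as $L^d=N$), and since Lemma \ref{lem:bridges}(b) applied to the $\pi_M$-measurable functional $F_M$ tells us that $\sum_{x'\in\pi_M^{-1}(x)}p(t,\cdot,x')\mathbf{E}_t^{\cdot,x'}[F_M]$ is unchanged when the base point $x\in\cK^{\langle M\rangle}$ is replaced by any of its copies $\widetilde{\pi}_{\Delta_{M,i}}(x)$, splitting the $\cK^{\langle M+1\rangle}$-integral over the $\Delta_{M,i}$'s yields
\[
\mathbb{E}_{\mathbb{Q}}\Lambda_M^{N,V_M^\omega}(t)=\frac{1}{\mu(\cK^{\langle M+1\rangle})}\int_{\cK^{\langle M+1\rangle}}\ \sum_{x'\in\pi_M^{-1}(\pi_M(z))}p(t,z,x')\,\mathbf{E}_t^{z,x'}\!\big[F_M(X)\big]\,\mu({\rm d}z).
\]
Now for $\mu$-a.e.\ $z\in\cK^{\langle M+1\rangle}$ the relation $\pi_M\circ\pi_{M+1}=\pi_M$ (\cite[Proposition 3.3]{bib:KOP}) gives the fiber inclusion $\pi_{M+1}^{-1}(z)\subseteq\pi_M^{-1}(\pi_M(z))$, so discarding the extra (nonnegative) terms and then invoking Lemma \ref{lem:monoto} --- which yields $F_M(X)\ge F_{M+1}(X)$ along every path for $M$ large --- we get
\[
\sum_{x'\in\pi_M^{-1}(\pi_M(z))}p(t,z,x')\mathbf{E}_t^{z,x'}[F_M]\ \ge\ \sum_{x'\in\pi_{M+1}^{-1}(z)}p(t,z,x')\mathbf{E}_t^{z,x'}[F_M]\ \ge\ \sum_{x'\in\pi_{M+1}^{-1}(z)}p(t,z,x')\mathbf{E}_t^{z,x'}[F_{M+1}].
\]
By the level-$(M+1)$ diagonal bridge identity, the last sum integrates over $\cK^{\langle M+1\rangle}$ (after division by $\mu(\cK^{\langle M+1\rangle})$) precisely to $\mathbb{E}_{\mathbb{Q}}\Lambda_{M+1}^{N,V_{M+1}^\omega}(t)$, so integrating the pointwise inequality gives $\mathbb{E}_{\mathbb{Q}}\Lambda_M^{N,V_M^\omega}(t)\ge\mathbb{E}_{\mathbb{Q}}\Lambda_{M+1}^{N,V_{M+1}^\omega}(t)$ for all sufficiently large $M$, which together with the boundedness finishes the argument.

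I expect the main obstacle to be the geometric recasting over $\cK^{\langle M+1\rangle}$: one must check carefully that Lemma \ref{lem:bridges}(b) may be used with the functional $F_M$ (hence the emphasis on its $\pi_M$-measurability), keep precise track of the base-point changes and of the $N$-fold measure-preserving splitting of $\cK^{\langle M+1\rangle}$ (with the matching normalization $\mu(\cK^{\langle M+1\rangle})=N\mu(\cK^{\langle M\rangle})$), and combine the two monotonicities in the correct order --- first the purely geometric one coming from $\pi_{M+1}^{-1}(z)\subseteq\pi_M^{-1}(\pi_M(z))$, and only afterwards the potential-monotonicity supplied by Lemma \ref{lem:monoto}.
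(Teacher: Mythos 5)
Your proposal is correct and follows essentially the same route as the paper's proof: both arguments combine Lemma \ref{lem:bridges}(a) (diagonal bridge identity), the fiber inclusion $\pi_{M+1}^{-1}(x)\subset\pi_M^{-1}(\pi_M(x))$, Lemma \ref{lem:monoto} for the potential-monotonicity, and Lemma \ref{lem:bridges}(b) for the recasting of the level-$M$ quantity over $\cK^{\langle M+1\rangle}$, the only difference being that the paper runs the chain of inequalities starting from the $\Lambda_{M+1}$ integrand while you start from $\Lambda_M$. Your write-up actually makes explicit the ``follow the argument of \cite[Theorem 3.1]{bib:KaPP2}'' step (the measure-preserving $N$-fold splitting of $\cK^{\langle M+1\rangle}$ and the base-point invariance from Lemma \ref{lem:bridges}(b)) that the paper delegates to a reference, and you correctly note both the applicability of Lemma \ref{lem:bridges} to the $\pi_M$-measurable functional $F_M$ and that Lemma \ref{lem:monoto} only yields the monotonicity for $M$ sufficiently large.
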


The next Lemma asserts that  the expectations of $ \Lambda_{M}^{D,  V_M^\omega}(t)$, $ \Lambda_{M}^{D, V^\omega}(t)$ and $ \Lambda_{M}^{N,  V^\omega}(t)$ share the limit with the expectations of $ \Lambda_{M}^{N,  V_M^\omega}(t)$, as $M \to \infty$.

\begin{lemma}
\label{lem:commonlimit}
Let $\cK^{\langle \infty \rangle}$ be an USNF with the GLP and let the assumptions \textup{\textbf{(B)}},  \textup{\textbf{(Q1)}} and \textup{\textbf{(W1)}}-\textup{\textbf{(W3)}} hold.  Fix $t>0$. \begin{itemize}
\item[(a)] We have
\begin{equation*}
\sum_{M=1}^{\infty} \mathbb{E}_{\mathbb{Q}} \left(\Lambda_{M}^{D,  V^\omega}(t) - \Lambda_{M}^{N,  V^\omega} (t) \right)^2 < \infty;
\end{equation*}
in particular,
\begin{equation*}
\lim_{M \to \infty} \mathbb{E}_{\mathbb{Q}} \left(\Lambda_{M}^{D,  V^\omega} (t) - \Lambda_{M}^{N,  V^\omega} (t) \right)^2 = 0 .
\end{equation*}
\item[(b)] We have
\begin{equation*}
\lim_{M \to \infty} \mathbb{E}_{\mathbb{Q}} \left(\Lambda_{M}^{D,  V_M^\omega}(t) - \Lambda_{M}^{N,  V_M^\omega} (t) \right) = 0 \quad
\text{and} \quad
\lim_{M \to \infty} \mathbb{E}_{\mathbb{Q}} \left( \Lambda_{M}^{D,  V^\omega} (t) - \Lambda_{M}^{D,  V_M^\omega} (t)\right) = 0 .
\end{equation*}
\end{itemize}
\end{lemma}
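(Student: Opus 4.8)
\emph{Plan.} Everything rests on one mechanism: \emph{Dirichlet--Neumann bracketing with an $\omega$-independent remainder}. Fix $t>0$. For $\mu$-a.e.\ $x\in\cK^{\langle M\rangle}$ (i.e.\ $x\notin\cV_M^{\langle\infty\rangle}$), extending Lemma~\ref{lem:bridges}(a) from Borel sets to the bounded functional $e^{-\int_0^t V^{\omega}(\cdot_s)\,ds}$ by a monotone--class argument, and using that $\pi_M$ is the identity on $\cK^{\langle M\rangle}$ while $X_s\in\cK^{\langle M\rangle}$ on $\{t<\tau_{\cK^{\langle M\rangle}}\}$, one obtains
\[
u_{M,\omega}^{N}(t,x,x)-u_{M,\omega}^{D}(t,x,x)=p(t,x,x)\,\mathbf{E}_t^{x,x}\!\big[e^{-\int_0^t V^{\omega}(\pi_M(X_s))\,ds};\,t\ge\tau_{\cK^{\langle M\rangle}}\big]+\!\!\!\sum_{x'\in\pi_M^{-1}(x)\setminus\{x\}}\!\!\! p(t,x,x')\,\mathbf{E}_t^{x,x'}\!\big[e^{-\int_0^t V^{\omega}(\pi_M(X_s))\,ds}\big]\ \ge 0 .
\]
Since $V^{\omega}\ge0$ we may bound each exponential by $1$ and recognise the $V\equiv0$ analogue, getting $\omega$-independently
\[
0\le\Lambda_{M}^{N,V^\omega}(t)-\Lambda_{M}^{D,V^\omega}(t)\le\Lambda_{M}^{N,0}(t)-\Lambda_{M}^{D,0}(t)=:a_M(t),
\]
and the identical computation with $V_M^\omega$ in place of $V^\omega$ (again $V_M^\omega\ge0$ and $V_M^\omega=V_M^\omega\circ\pi_M$ on $\cK^{\langle M\rangle}$) yields $0\le\Lambda_{M}^{N,V_M^\omega}(t)-\Lambda_{M}^{D,V_M^\omega}(t)\le a_M(t)$. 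Thus the first identity of (b) and the summability in (a) both reduce to the behaviour of $a_M(t)$: for (a) one has $\mathbb{E}_{\mathbb{Q}}(\Lambda_{M}^{D,V^\omega}(t)-\Lambda_{M}^{N,V^\omega}(t))^2\le a_M(t)^2$, so it suffices that $\sum_M a_M(t)^2<\infty$; for (b) we only need $a_M(t)\to0$.

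\emph{Estimating $a_M(t)$.} Writing out the diagonal kernels (using $x\in\pi_M^{-1}(x)$ and $p_M(t,x,x)=\sum_{x'\in\pi_M^{-1}(x)}p(t,x,x')$ for $\mu$-a.e.\ $x$),
\[
a_M(t)=\frac1{\mu(\cK^{\langle M\rangle})}\int_{\cK^{\langle M\rangle}}\Big[\,p(t,x,x)\,\mathbf{P}_t^{x,x}\big(t\ge\tau_{\cK^{\langle M\rangle}}\big)+\!\!\!\sum_{x'\in\pi_M^{-1}(x)\setminus\{x\}}\!\!\! p(t,x,x')\,\Big]\mu(dx).
\]
Split $\cK^{\langle M\rangle}=\widetilde S_M\sqcup\widetilde B_M$, where $\widetilde S_M$ is the union of all $\lfloor M/4\rfloor$-complexes lying within $d_{\lfloor M/4\rfloor}$-distance at most a fixed $r^\ast$ of the $k$-point set $\cV_M^{\langle M\rangle}$; this is a \emph{bounded} number of $\lfloor M/4\rfloor$-complexes, so $\mu(\widetilde S_M)\le C\,L^{\lfloor M/4\rfloor d}$ and $\mu(\widetilde S_M)/\mu(\cK^{\langle M\rangle})=O(L^{-3dM/4})$. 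On $\widetilde S_M$ the bracketed integrand is crudely $\le p(t,x,x)+p_M(t,x,x)\le C(t)$ (Lemma~\ref{lem:regularity_of_p}), contributing $O(L^{-3dM/4})$. On $\widetilde B_M$ each $x'\in\pi_M^{-1}(x)\setminus\{x\}$, as well as $X_{\tau_{\cK^{\langle M\rangle}}}$ under $\mathbf{P}^x$, lies at Euclidean distance $\gtrsim L^{\lfloor M/4\rfloor}$ from $x$ (since $\dist(x,\cK^{\langle\infty\rangle}\setminus\cK^{\langle M\rangle})\gtrsim L^{\lfloor M/4\rfloor}$); hence, using $\sum_{x'\in\pi_M^{-1}(x)\setminus\cK^{\langle M+1\rangle}}p(t,x,x')\le cL^{-dM}$ (\cite[Lemma 3.4]{bib:MO} together with \eqref{eq:sub_est}) for the far mirror points, the off-diagonal heat-kernel bound for the $\le N-1$ mirror points inside $\cK^{\langle M+1\rangle}$, and the first-exit identity $p(t,x,x)\mathbf{P}_t^{x,x}(t\ge\tau_{\cK^{\langle M\rangle}})=\mathbf{E}^x[p(t-\tau_{\cK^{\langle M\rangle}},X_{\tau_{\cK^{\langle M\rangle}}},x);\,\tau_{\cK^{\langle M\rangle}}\le t]$ together with the off-diagonal bound at scale $\gtrsim L^{\lfloor M/4\rfloor}$ for the exit term, one gets a contribution $O(L^{-\varepsilon M})$. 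Hence $a_M(t)=O(L^{-\varepsilon M})$, so $\sum_M a_M(t)<\infty$ and, a fortiori, $\sum_M a_M(t)^2<\infty$; this proves (a) and the first identity of (b).

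\emph{The second identity of (b).} Here both semigroups come from the \emph{same} killed process, so no folding enters and the whole discrepancy is due to the potentials. Reindexing \eqref{eq:periodized_pot} gives $V_M^\omega(x)=\sum_{v'\in\cV_0^{\langle\infty\rangle}}\xi_{\pi_M(v')}W(x,v')$, whence, since $\pi_M(v')=v'$ precisely for $v'\in\cV_0^{\langle M\rangle}=\cV_0^{\langle\infty\rangle}\cap\cK^{\langle M\rangle}$,
\[
V^\omega(x)-V_M^\omega(x)=\sum_{v'\in\cV_0^{\langle\infty\rangle}\setminus\cV_0^{\langle M\rangle}}\big(\xi_{v'}-\xi_{\pi_M(v')}\big)W(x,v').
\]
Using $|e^{-a}-e^{-b}|\le|a-b|$ for $a,b\ge0$, then Tonelli and $\mathbb{E}_{\mathbb{Q}}|\xi_{v'}-\xi_{\pi_M(v')}|\le2\,\mathbb{E}_{\mathbb{Q}}\xi<\infty$ (by \textbf{(Q1)}), and that on $\{t<\tau_{\cK^{\langle M\rangle}}\}$ the bridge stays in $\cK^{\langle M\rangle}$,
\[
\mathbb{E}_{\mathbb{Q}}\big|\Lambda_{M}^{D,V^\omega}(t)-\Lambda_{M}^{D,V_M^\omega}(t)\big|\le\frac{2\,\mathbb{E}_{\mathbb{Q}}\xi}{\mu(\cK^{\langle M\rangle})}\int_{\cK^{\langle M\rangle}}p(t,x,x)\,\mathbf{E}_t^{x,x}\!\Big[\int_0^t\!\!\sum_{v'\in\cV_0^{\langle\infty\rangle}\setminus\cV_0^{\langle M\rangle}}\!\!\!W(X_s,v')\,ds;\,t<\tau_{\cK^{\langle M\rangle}}\Big]\mu(dx).
\]
The key geometric observation is that for $\mu$-a.e.\ $z\in\cK^{\langle M\rangle}$ the only $M$-complex containing $z$ is $\cK^{\langle M\rangle}$ itself, i.e.\ $\mathcal{C}_M(z)=\cK^{\langle M\rangle}$; hence $\cV_0^{\langle\infty\rangle}\setminus\cV_0^{\langle M\rangle}=\cV_0^{\langle\infty\rangle}\setminus\mathcal{C}_M(z)$ and therefore $\sum_{v'\in\cV_0^{\langle\infty\rangle}\setminus\cV_0^{\langle M\rangle}}W(z,v')\le\epsilon_M$, where $\epsilon_M:=\sup_z\sum_{v'\in\cV_0^{\langle\infty\rangle}\setminus\mathcal{C}_M(z)}W(z,v')$ is nonincreasing in $M$ and, by \textbf{(W3)} (which gives $\sum_M\epsilon_{\lfloor M/4\rfloor}<\infty$), satisfies $\epsilon_M\to0$. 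Inserting this, bounding the inner expectation by $t\,\epsilon_M$, and using $\frac1{\mu(\cK^{\langle M\rangle})}\int_{\cK^{\langle M\rangle}}p(t,x,x)\,\mu(dx)\le c_1t^{-d/\alpha}$ (Lemma~\ref{lem:regularity_of_p}(a)), we conclude
\[
\mathbb{E}_{\mathbb{Q}}\big|\Lambda_{M}^{D,V^\omega}(t)-\Lambda_{M}^{D,V_M^\omega}(t)\big|\le 2c_1t^{1-d/\alpha}\,\mathbb{E}_{\mathbb{Q}}\xi\;\epsilon_M\;\xrightarrow[M\to\infty]{}\;0 .
\]

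\emph{Main obstacle.} The algebraic skeleton --- the bridge decomposition, dropping the weight, the H\"older/Tonelli bookkeeping, the cancellation $\pi_M(v')=v'$ on $\cV_0^{\langle M\rangle}$ --- is routine. The real work is the quantitative control of the \emph{free} Dirichlet--Neumann defect $a_M(t)$: showing it is not merely $o(1)$ but summable, so that the squared differences form a convergent series. For the jump process $X$ the exit term in $a_M(t)$ is a genuine boundary contribution which, unlike in the local case, is not captured by a finite-codimension comparison of form domains; it has to be bounded through (i) the folding tail estimate \cite[Lemma 3.4]{bib:MO}, (ii) a surface-to-volume estimate for $\cK^{\langle M\rangle}$ (whose boundary is the $k$-point set $\cV_M^{\langle M\rangle}$), and (iii) sharp enough off-diagonal decay of the subordinate heat kernel $p(t,\cdot,\cdot)$ at Euclidean scale $\sim L^{\lfloor M/4\rfloor}$ --- the last point requiring one to turn the fractal's chemical/graph geometry near the corner vertices into an honest Euclidean-distance lower bound. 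I expect this to be the most delicate point, although the heat-kernel estimates of \cite{bib:MO} should supply exactly the inputs needed.
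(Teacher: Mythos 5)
Your proof follows essentially the same strategy as the paper's --- Dirichlet--Neumann bracketing via Lemma~\ref{lem:bridges}(a) plus the observation $V^\omega,V_M^\omega\ge 0$ reduces both parts to $\omega$-free quantities (exit probability and mirror-point kernel sums), and the second identity of (b) is handled via the reindexing $V_M^\omega(x)=\sum_{v'}\xi_{\pi_M(v')}W(x,v')$, the inequality $|e^{-a}-e^{-b}|\le|a-b|$, independence of the $\xi_v$, and \textbf{(W3)}. One nice simplification you found: for the second identity of (b) you note that $\cC_M(z)=\cK^{\langle M\rangle}$ for $\mu$-a.e.\ $z\in\cK^{\langle M\rangle}$, which makes the set $\cV_0^{\langle\infty\rangle}\setminus\cV_0^{\langle M\rangle}$ coincide with $\cV_0^{\langle\infty\rangle}\setminus\cC_M(z)$ and lets you invoke \textbf{(W3)} directly, without splitting off a boundary collar $\cK^{\langle M\rangle}\setminus\cD_M$ as the paper does. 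This is genuinely cleaner.

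However, there is an overclaim in your estimate of the free defect $a_M(t)$. You assert that the inner-region contribution to $a_M(t)$ is $O(L^{-\varepsilon M})$, resting on (iii) an "off-diagonal bound at scale $\gtrsim L^{\lfloor M/4\rfloor}$" for the exit term $p(t,x,x)\,\mathbf{P}_t^{x,x}(t\ge\tau_{\cK^{\langle M\rangle}})$. This is not justified under assumption \textbf{(B)} alone. After folding through $X_s=Z_{S_s}$, the exit probability $\sup_x\mathbf{P}^x[\sup_{s\le t/2}|X_s-x|>cL^{M/4}]$ is dominated, up to a stretched-exponentially small term, by the subordinator tail $\eta_t\big(c'L^{d_w M/4},\infty\big)$. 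Assumption \textbf{(B)} only guarantees (via \eqref{eq:sub}) that this sequence is \emph{summable} in $M$, not that it decays geometrically; a Laplace exponent with $\phi$ essentially linear near zero can produce arbitrarily slowly decaying (yet summable) tails $\eta_t(u,\infty)$. So you cannot assert a rate $L^{-\varepsilon M}$. What you actually need is only summability of $a_M(t)$ (which gives $\sum a_M(t)^2<\infty$ for (a) and $a_M(t)\to0$ for the first identity of (b)), and this follows directly from Lemma~\ref{lem:supseries} for the exit term and Lemma~\ref{lem:ceemteandtrace}(b) for the mirror-point term $R_{2,M}(t)=\frac{1}{\mu(\cK^{\langle M\rangle})}\int_{\cK^{\langle M\rangle}}\big(p_M(t,x,x)-p(t,x,x)\big)\mu(dx)$. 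If you replace the unjustified geometric rate by a citation to these two lemmas (which is exactly what the paper does, using the $\lfloor M/2\rfloor$-scale decomposition $\cD_M$ rather than your $\lfloor M/4\rfloor$-scale one --- an inessential choice), the proof is correct.
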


 From Proposition \ref{thm:convergence_with_star} and Lemma \ref{lem:commonlimit}, we get the following Corollary.

\begin{corollary}\label{coro:conv}
Let $\cK^{\langle \infty \rangle}$ be an USNF with the GLP and let the assumptions \textup{\textbf{(B)}},  \textup{\textbf{(Q1)}} and \textup{\textbf{(W1)}}-\textup{\textbf{(W3)}} hold.  For every $t>0$, $ \mathbb{E}_{\mathbb{Q}} \Lambda_{M}^{D,  V^\omega}(t)$, $ \mathbb{E}_{\mathbb{Q}} \Lambda_{M}^{D,  V_M^\omega}(t)$ and $ \mathbb{E}_{\mathbb{Q}} \Lambda_{M}^{N,  V^\omega}(t)$ are also convergent as $M \to \infty$ to $\Lambda(t)$ identified in  Proposition \ref{thm:convergence_with_star}.
\end{corollary}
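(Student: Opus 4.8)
The plan is to deduce all four convergences from a single chain of implications, starting from the one quantity whose convergence is already in hand, namely $\mathbb{E}_{\mathbb{Q}} \Lambda_{M}^{N, V_M^\omega}(t)$, and propagating it to $\mathbb{E}_{\mathbb{Q}} \Lambda_{M}^{D, V_M^\omega}(t)$, then to $\mathbb{E}_{\mathbb{Q}} \Lambda_{M}^{D, V^\omega}(t)$, and finally to $\mathbb{E}_{\mathbb{Q}} \Lambda_{M}^{N, V^\omega}(t)$, using the two parts of Lemma \ref{lem:commonlimit} as the bridges. Throughout, $t>0$ is fixed.

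First, Proposition \ref{thm:convergence_with_star} gives $\mathbb{E}_{\mathbb{Q}} \Lambda_{M}^{N, V_M^\omega}(t) \to \Lambda(t)$ as $M \to \infty$. Adding to this the first limit of Lemma \ref{lem:commonlimit}(b), $\mathbb{E}_{\mathbb{Q}}\bigl(\Lambda_{M}^{D, V_M^\omega}(t) - \Lambda_{M}^{N, V_M^\omega}(t)\bigr) \to 0$, we obtain $\mathbb{E}_{\mathbb{Q}} \Lambda_{M}^{D, V_M^\omega}(t) \to \Lambda(t)$. Next, the second limit of Lemma \ref{lem:commonlimit}(b), $\mathbb{E}_{\mathbb{Q}}\bigl(\Lambda_{M}^{D, V^\omega}(t) - \Lambda_{M}^{D, V_M^\omega}(t)\bigr) \to 0$, gives $\mathbb{E}_{\mathbb{Q}} \Lambda_{M}^{D, V^\omega}(t) \to \Lambda(t)$.

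For the remaining quantity I would invoke Lemma \ref{lem:commonlimit}(a): since $\mathbb{E}_{\mathbb{Q}}\bigl(\Lambda_{M}^{D, V^\omega}(t) - \Lambda_{M}^{N, V^\omega}(t)\bigr)^2 \to 0$, the Cauchy--Schwarz inequality yields $\mathbb{E}_{\mathbb{Q}}\bigl|\Lambda_{M}^{D, V^\omega}(t) - \Lambda_{M}^{N, V^\omega}(t)\bigr| \to 0$, whence $\mathbb{E}_{\mathbb{Q}} \Lambda_{M}^{N, V^\omega}(t) \to \Lambda(t)$ as well. Thus all four sequences converge to the common limit $\Lambda(t)$ from Proposition \ref{thm:convergence_with_star}. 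There is no genuine obstacle here: the corollary is a bookkeeping consequence of the preceding results, the only (minor) point being the passage from the $L^2(\mathbb{Q})$-convergence in part (a) to $L^1(\mathbb{Q})$-convergence before extracting the limit of the expectations.
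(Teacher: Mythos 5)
Your proposal is correct and follows essentially the same route the paper intends, presenting the corollary as an immediate consequence of Proposition~\ref{thm:convergence_with_star} and Lemma~\ref{lem:commonlimit}: you chain the known limit of $\mathbb{E}_{\mathbb{Q}}\Lambda_{M}^{N,V_M^\omega}(t)$ through the two vanishing differences of Lemma~\ref{lem:commonlimit}(b), then close the circle with Lemma~\ref{lem:commonlimit}(a) and the standard $L^2(\mathbb{Q})\to L^1(\mathbb{Q})$ step via Cauchy--Schwarz.
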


Now as we know that the expectations are convergent,
  our last technical step is  to show that the series of variances of the random variables $\Lambda_{M}^{D,  V^\omega}(t)$ and $\Lambda_{M}^{N,  V^\omega}(t)$ are convergent.

\begin{lemma} \label{lem:variances}
Let $\cK^{\langle \infty \rangle}$ be an USNF with the GLP and let the assumptions \textup{\textbf{(B)}},  \textup{\textbf{(Q1)}} and \textup{\textbf{(W1)}}-\textup{\textbf{(W3)}} hold.  For every $t>0$ we have
\begin{equation}
\label{eq:dvarlemma}
\sum_{M=1}^{\infty} \mathbb{E}_{\mathbb{Q}} \left[ \Lambda_{M}^{D,  V^\omega}(t) - \mathbb{E}_{\mathbb{Q}}\Lambda_{M}^{D,  V^\omega}(t)\right]^2 < \infty
\end{equation}
and
\begin{equation}
\label{eq:nvarlemma}
\sum_{M=1}^{\infty} \mathbb{E}_{\mathbb{Q}} \left[ \Lambda_{M}^{N,  V^\omega}(t) - \mathbb{E}_{\mathbb{Q}}\Lambda_{M}^{N,  V^\omega}(t)\right]^2 < \infty
\end{equation}
\end{lemma}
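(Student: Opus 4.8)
The plan is to establish both \eqref{eq:dvarlemma} and \eqref{eq:nvarlemma} by the same argument, so I will focus on the Dirichlet case; the Neumann case is identical with $X$ replaced by $X^M$ and the kernel $p$ by $p_M$. First I would rewrite the centered random variable using the trace formula already recorded in the text:
\[
\Lambda_{M}^{D, V^\omega}(t) - \mathbb{E}_{\mathbb{Q}}\Lambda_{M}^{D, V^\omega}(t)
= \frac{1}{\mu(\cK^{\langle M\rangle})} \int_{\cK^{\langle M\rangle}} p(t,x,x)\, \Big( \mathbf{E}_{t}^{x,x}\big[{\rm e}^{-\int_0^t V^\omega(X_s){\rm d}s};\, t<\tau_{\cK^{\langle M\rangle}}\big] - \mathbb{E}_{\mathbb{Q}}\mathbf{E}_{t}^{x,x}\big[\cdots\big] \Big)\, \mu({\rm d}x).
\]
Squaring and taking $\mathbb{E}_{\mathbb{Q}}$, and using $p(t,x,x)\le c_1 t^{-d/\alpha}$ from Lemma \ref{lem:regularity_of_p}(a) together with $\mu(\cK^{\langle M\rangle}) = L^{Md}$, I would reduce the $M$-th summand to a double integral over $\cK^{\langle M\rangle}\times\cK^{\langle M\rangle}$ of the $\mathbb{Q}$-covariance of the two bridge functionals $G_x:=\mathbf{E}_{t}^{x,x}[{\rm e}^{-\int_0^t V^\omega(X_s){\rm d}s};\, t<\tau_{\cK^{\langle M\rangle}}]$ and $G_y$, picking up a prefactor of order $L^{-2Md}\cdot (c_1 t^{-d/\alpha})^2$.

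The heart of the matter is a decorrelation estimate: $\mathrm{Cov}_{\mathbb{Q}}(G_x,G_y)$ must be small when $x$ and $y$ are far apart in the graph metric $d_M$. I would exploit that $G_x$ depends on $\omega$ only through the values $\{\xi_v : v \in \cV_0^{\langle\infty\rangle}\}$ that actually influence a bridge path from $x$ to $x$ of length $t$ that stays inside $\cK^{\langle M\rangle}$. Because $0\le G_x \le 1$ and $G_x$ is built from a Feynman--Kac exponential, the sensitivity of $G_x$ to the far-away sites is controlled by the potential mass they contribute: conditioning on the bridge and using $|e^{-a}-e^{-b}|\le|a-b|$ for the Feynman--Kac factor, one gets that replacing the configuration outside a $d_M$-ball around $x$ changes $G_x$ by at most $\mathbf{E}_{t}^{x,x}\big[\int_0^t \sum_{v \notin \mathcal{C}_{\lfloor \cdot \rfloor}(x)} \xi_v W(X_s,v)\,{\rm d}s\big]$, whose $\mathbb{Q}$-expectation is bounded by $t\,\mathbb{E}_{\mathbb{Q}}\xi\cdot \sup_z \sum_{v\notin \mathcal{C}_{\lfloor r\rfloor}(z)} W(z,v)$. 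By \textbf{(Q1)} and — crucially — assumption \textbf{(W3)}, this tail is summable in the radius. This is the step I expect to be the main obstacle: one must set up the conditioning-and-coupling for the bridge measure carefully (using the disintegration \eqref{eq:bridge}), and one must match the radius at which the influence of $G_x$ effectively truncates with the $d_M$-distance between $x$ and $y$ so that a pair at distance $n$ contributes roughly the $n$-th tail of the \textbf{(W3)} series.

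Once decorrelation is in hand, I would finish by a combinatorial/volume count. For fixed $x$, the number of $y\in\cK^{\langle M\rangle}$ with $d_M(x,y) = n$ grows at most like a fixed power of $N$ times $N^{n}$ (there are at most $C N^n$ complexes within graph-distance $n$, each of bounded diameter in $\mu$-measure $L^{-?}$), so integrating the covariance bound over $y$ at each distance $n$ and then over $x$ over all of $\cK^{\langle M\rangle}$ gives a factor $\mu(\cK^{\langle M\rangle}) = L^{Md}$ times $\sum_n (\text{volume at distance } n)\cdot(\text{tail at radius }\sim n)$. The prefactor $L^{-2Md}$ then leaves $L^{-Md}$ times a constant independent of $M$, which is plainly summable over $M\in\mathbb{Z}_+$ (a geometric series). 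Putting the pieces together yields \eqref{eq:dvarlemma}; running the same argument with $p_M$ in place of $p$ and using Lemma \ref{lem:regularity_of_p}(b) (so the on-diagonal bound is $c_2(t^{-d/\alpha}\vee L^{-dM})$, which only helps) and Lemma \ref{lem:bridges} to handle the reflected bridge gives \eqref{eq:nvarlemma}. The only delicate bookkeeping is to ensure the single-site tail that appears is exactly the one controlled by \textbf{(W3)} with the floor $\lfloor M/4\rfloor$ — the constant $1/4$ there is precisely what gives the needed room between the effective support of $G_x$ and the metric separation of distant pairs.
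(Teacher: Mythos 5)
Your proposal correctly identifies the key ingredients — a covariance/decorrelation argument, the role of assumption \textbf{(W3)}, and the significance of the scale $\lfloor M/4\rfloor$ — and it is close in spirit to the paper's proof. However, two of the steps you sketch do not actually close, and the final summability accounting is wrong.

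\medskip

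\textbf{The decorrelation step has a gap.} You bound the sensitivity of $G_x = \mathbf{E}^{x,x}_t[{\rm e}^{-\int_0^t V^\omega(X_s)\,{\rm d}s};\, t<\tau_{\cK^{\langle M\rangle}}]$ to far-away lattice sites by ``replacing the configuration outside a $d_M$-ball around $x$,'' then estimating the difference via
$\mathbf{E}^{x,x}_t\bigl[\int_0^t \sum_{v \notin \mathcal{C}_{\lfloor r\rfloor}(x)} \xi_v W(X_s,v)\,{\rm d}s\bigr]$ and invoking \textbf{(W3)}. The trouble is that in \textbf{(W3)} the ball is centered at the same point where $W$ is evaluated, i.e.\ one controls $\sup_z \sum_{v\notin\mathcal{C}_{\lfloor r\rfloor}(z)} W(z,v)$; but in your expression the ball is centered at the starting point $x$ while $W$ is evaluated at $X_s$, which can be far from $x$. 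When the bridge wanders beyond scale $r$, the site $v$ that is far from $x$ may be close to $X_s$, and then $W(X_s,v)$ is not in the \textbf{(W3)} tail. You would have to split off the wandering paths — the paper does exactly this with its partition into $D_0^M, D_1^M, D_2^M$ (paths well-separated; paths that start far apart but come close, which forces at least one to travel far; paths whose starting points are already close) and controls the wandering paths using Lemma~\ref{lem:supseries}. Your proposal mentions ``matching the truncation radius with the $d_M$-distance'' but never performs this split, and it cannot be dispensed with.

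\medskip

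\textbf{The volume/summability arithmetic does not work as stated.} You claim the prefactor $L^{-2Md}$ in front of the double integral over $\cK^{\langle M\rangle}\times\cK^{\langle M\rangle}$ ``leaves $L^{-Md}$ times a constant independent of $M$.'' If the covariance merely decays as a function of the $d_M$-graph distance, then the $y$-integral is of order (number of close $M$-complexes)$\,\times\,N^M$, and after multiplying by $\mu(\cK^{\langle M\rangle})=N^M$ and dividing by $L^{2Md}=N^{2M}$ you obtain a constant, not anything summable in $M$. The geometric gain in the paper comes from working at the finer scale $a_M=\lfloor M/4\rfloor \ll M$: the covariance is controlled by the \textbf{(W3)}-tail at level $a_M$ (a genuinely $M$-dependent, summable quantity), the wandering estimate at scale $a_M$ is summable by Lemma~\ref{lem:supseries}, and the measure of the ``close-starting-points'' region is of order $N^{a_M+3}/N^M=N^{a_M-M+3}$, which decays geometrically precisely because $a_M-M\to-\infty$. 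These three distinct sources of decay, each tied to $a_M$, are what make the series converge; there is no single $L^{-Md}$ factor as you suggest.

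\medskip

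Finally, a minor point: the paper does not rerun the argument for the Neumann case but instead reduces \eqref{eq:nvarlemma} to \eqref{eq:dvarlemma} using Lemma~\ref{lem:commonlimit}(a); this is cleaner than repeating the whole argument with $p_M$ in place of $p$.
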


Once
 Proposition \ref{thm:convergence_with_star} and Lemma \ref{lem:variances} are established, to get the statement of Theorem \ref{thm:IDS}  we can follow verbatim the argument in the proof of \cite[Theorem 3.2]{bib:KaPP2}.

\section{The Lifschitz tail} \label{sec:LT}
The existence of the Lifchitz tail in the Anderson model on fractals
can be proven under additional assumptions on the  single-site potential.
  These assumptions are  as follows:\
\begin{itemize} 
\item[\bf (W4)](\emph{finite range  and uniform integrability}) there exists $M_0>0$ such that $W(x,v)=0$ when $d_{M_0}(x,v)>1$ (i.e. when $x,v$ do not belong to the same $M_0$-complex), and
\[
\sup_{x \in \cK^{\langle \infty \rangle}} \int_{\mathcal{C}_{M_0}(v)}W(x,v)\mu({\rm d}x) < \infty;
\]\
\item[ \bf (W5)] (\emph{separation from zero}) there exist $ m_1\in\mathbb Z_{-}:=\left\{-1,-2,-3,\ldots\right\}$ and $A_0>0$ such that $W(x,v)>A_0$ when $d_{m_1}(x,v)\leq 1$.
\end{itemize}
The behaviour of the IDS at 0 reflects the behaviour of the common cumulative  distribution function of the random variables at the vertices, i.e.\ the function
\begin{equation}\label{eq:f-q}
F_\xi(t) :=\mathbb Q(\xi_v\leq t).
\end{equation}
Recall that  we always assume that  the random variables $\xi_v$ are  nonnegative,  nondegenerate and integrable, see Definition \ref{def:alloy} and Assumption {\bf (Q1)}.
Now we require somewhat more regularity. What we need is that the function $F_\xi$ fulfills  the following assumption:\
\begin{itemize}
\item[\bf (Q2)] $F_\xi (\lambda)>0$ for $\lambda >0,$ and there exists $\lambda_0>0$ such that $F_\xi$ is continuous on $[0,\lambda_0]$ (with possible left discontinuity at 0, corresponding to an atom at 0).
\end{itemize}

\smallskip

We define
\begin{equation}\label{eq:d_0_alloytype}
  D_0 := \frac{\widetilde C_1}{4 A_0 C_0 r_0},
 \end{equation}
where
\begin{align} \label{eq:c-tilde}
\widetilde{C}_1 = C_1(\mu_2^1)^{\alpha/d_w}
\end{align}
($C_1$ is the constant from \eqref{eq:phi_at_zero} and $\mu_2^1$ was defined in Section \ref{sec:refl_ev}), $C_0$ comes from \eqref{eq:number-points}, $A_0$  from {\bf (W5)}, and $r_0$ is the maximal rank in \eqref{eq:def-r0}.

Following \cite[Section 3]{bib:KaPP3} we introduce the functions:
\begin{equation}\label{eq:g_i_j}
g(x): = \log \frac{1}{F_\xi(\frac{D_0}{x})}, \qquad j(x): = x^{d+\alpha}g(x^{\alpha}), \quad x>0,
\end{equation}
with the constant $D_0>0$  determined by \eqref{eq:d_0_alloytype}.
The functions:
\begin{equation}\label{eq:x_i_h}
x_t := j^{-1}(t), \qquad h(t):=g(x_{t}^{\alpha}), \quad \mbox{ for }   t \geq t_0:=j\left(\left(\frac{D_0}{\lambda_0}\right)^{1/\alpha}\right),
\end{equation}
 are well-defined and  they satisfy the relations
\begin{equation}\label{eq:t_cond}
t = x_{t}^{ d+\alpha}h(t) = x_t^{ d+\alpha} \log \frac{1}{F_\xi\left(\frac{D_0}{ x_{t}^{\alpha}}\right)}.
\end{equation}
Moreover, we have $x_t\to \infty$ as $t\to \infty.$
Our goal is the following theorem, very similar in spirit to its $\mathbb R^d-$counterpart \cite[Theorem 1.1]{bib:KaPP3}.

\begin{theorem}\label{th:Lifshitz-IDS}
Let $\cK^{\langle \infty \rangle}$ be an USNF with the GLP and let the assumptions \textup{\textbf{(B)}}, \textup{\textbf{(Q1)}}-\textup{\textbf{(Q2)}} and \textup{\textbf{(W1)}}-\textup{\textbf{(W5)}} hold.
Then there exist positive constants $K,\widetilde K, R, \widetilde R$
such that
\begin{align}\label{eq:main-statement}
-K\leq \liminf_{\lambda\searrow 0}\frac{\lambda^{ \frac{d}{\alpha}}\log \mathcal N(\lambda)}{g(R/\lambda)} \quad \text{and} \quad
\limsup_{\lambda\searrow 0}\frac{\lambda^{ \frac{d}{\alpha}}\log \mathcal N(\lambda)}{g(\widetilde R/\lambda)}\;\leq \; -\widetilde K,
\end{align}
where $g(\lambda)$ is defined by (\ref{eq:g_i_j}) above.
\end{theorem}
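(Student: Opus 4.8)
The plan is to transfer the asymptotics of the IDS at zero to the asymptotics of its Laplace transform $\Lambda(t)=\int_0^\infty e^{-\lambda t}\,\mathcal N({\rm d}\lambda)$ as $t\to\infty$, and to estimate the latter from above and below by functional-analytic means. First I would record the Tauberian dictionary: since $\mathcal N$ is the vague limit of the empirical eigenvalue measures, one has $\Lambda(t)=\lim_M \mathbb E_{\mathbb Q}\Lambda_M^{N,V_M^\omega}(t)$, and de Bruijn / exponential Tauberian theorems convert the statement $\log\Lambda(t)\sim -c\, t/x_t^{\,\alpha}$ (equivalently $\log\Lambda(t)\asymp -c\,h(t)$ with $h$ as in \eqref{eq:x_i_h}) into the two-sided bound \eqref{eq:main-statement} for $\log\mathcal N(\lambda)$ as $\lambda\searrow0$, because $j(x)=x^{d+\alpha}g(x^\alpha)$ and the relation \eqref{eq:t_cond} encode exactly the Legendre-type duality between the scale $t$ and the scale $\lambda\approx x_t^{-\alpha}$. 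So the real content is: prove the upper bound $\limsup_{t\to\infty} h(t)^{-1}\log\Lambda(t)\le -\widetilde K$ (this is Theorem \ref{th:main-alloy-type-fractals} announced in the introduction) and the lower bound $\liminf_{t\to\infty} h(t)^{-1}\log\Lambda(t)\ge -K$ (Theorem \ref{th:main-2}).

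For the \emph{lower bound on $\Lambda(t)$} (which gives the upper bound on the decay of $\mathcal N$, i.e. the constant $K$), I would use the classical Lifschitz heuristic: restrict the Feynman--Kac expectation to the event that the subordinate Brownian motion stays inside a single $m$-complex $\Delta$ of a well-chosen level $m=m(t)$ up to time $t$, \emph{and} to the event (in the $\omega$-variable) that all the random variables $\xi_v$ attached to vertices influencing $\Delta$ are smaller than $\varepsilon_t\approx D_0/x_t^\alpha$. The first event has probability bounded below by $e^{-c\,t\,\phi(L^{-md_w})}\gtrsim e^{-c\,t\,x_t^{-\alpha}}$ once $L^{m}\approx x_t$ (here assumption \textbf{(B)}, specifically \eqref{eq:phi_at_zero}, gives $\phi(L^{-md_w})\asymp L^{-m\alpha}$), while on that event the potential is at most $C\varepsilon_t$, contributing a further factor $e^{-Ct\varepsilon_t}$; the $\mathbb Q$-probability of the small-$\xi$ event is $F_\xi(\varepsilon_t)^{\#\{\text{relevant }v\}}\gtrsim e^{-c L^{md} g(x_t^\alpha)}=e^{-c\,x_t^{d}g(x_t^\alpha)}$, using \eqref{eq:number-points} and the finite range of $W$ from \textbf{(W4)}. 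Optimizing $m$ so that $t\,x_t^{-\alpha}\asymp x_t^{d}g(x_t^\alpha)$ — which is precisely \eqref{eq:t_cond} — yields $\log\Lambda(t)\gtrsim -K\,h(t)$. The separation-from-zero assumption \textbf{(W5)} is what makes the single-site potential genuinely "see" the small-$\xi$ event; \textbf{(W4)} keeps the number of relevant vertices $\asymp x_t^d$.

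For the \emph{upper bound on $\Lambda(t)$} (giving the lower bound on the decay, i.e. $\widetilde K$) I would follow the $\mathbb R^d$ scheme of \cite{bib:KaPP3}: project everything onto a bounded $m$-complex, i.e. work with the reflected (Neumann) operator $H_m^{N,V^\omega_m}$, whose bottom eigenvalue controls $\Lambda_m^{N,V^\omega_m}(t)\le (\#\text{eigenvalues})\,e^{-t\lambda_1^{N,m}}$ up to the normalization $\mu(\cK^{\langle m\rangle})\asymp L^{md}$. One needs a \emph{lower} bound on $\lambda_1(H_m^{N,V^\omega_m})$ valid with high $\mathbb Q$-probability. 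Here is the key step and the expected main obstacle: split the form $\langle H_m^{N,V}\psi,\psi\rangle = \langle\phi(-\mathcal L_m)\psi,\psi\rangle + \langle V\psi,\psi\rangle$; on the orthogonal complement of the constant eigenfunction $\psi_1$ the kinetic term is $\ge \lambda_2^m=\phi(\mu_2^m)\asymp \widetilde C_1 L^{-m\alpha}$ by \eqref{eq:2.8} and \eqref{eq:c-tilde}, while for the constant direction the potential term is $\langle V\psi_1,\psi_1\rangle = \mu(\cK^{\langle m\rangle})^{-1}\int V^\omega \,{\rm d}\mu \gtrsim A_0\, \mu(\cK^{\langle m\rangle})^{-1}\sum_{v}\xi_v$ by \textbf{(W4)}–\textbf{(W5)}. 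Applying the \textbf{Temple inequality} with $\psi_1$ as trial vector then forces $\lambda_1(H_m^{N,V})\gtrsim \min\{\lambda_2^m,\ \overline{\xi}_m\}$, where $\overline\xi_m$ is the empirical mean of the relevant $\xi_v$'s; choosing the "resonant" scale $L^{m}\asymp x_t$ balances $\lambda_2^m\asymp x_t^{-\alpha}$ against the event $\{\overline\xi_m\le D_0 x_t^{-\alpha}\}$, whose complement has $\mathbb Q$-probability that we must show is overwhelmingly large — this is where the constant $D_0=\widetilde C_1/(4A_0C_0r_0)$ in \eqref{eq:d_0_alloytype} comes from (the factor $4$ is the Temple slack, $C_0 r_0$ absorbs the vertex-counting and rank multiplicities). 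A union bound over the at most $L^{md}$ relevant vertices of the event $\{\text{too many } \xi_v \text{ small}\}$ costs $e^{c L^{md}g(x_t^\alpha)}$ in the wrong direction, so one must verify a large-deviation estimate: $\mathbb Q(\overline\xi_m\le D_0 x_t^{-\alpha})\le \exp(-c\,L^{md}g(x_t^\alpha))$, which again reduces to $\#\{v:\xi_v\le \varepsilon\}$ being small with probability $\asymp F_\xi(\varepsilon)^{-(\text{const})}$-type bounds following from \textbf{(Q2)}. Feeding this into $\mathbb E_{\mathbb Q}\Lambda_m^{N,V^\omega_m}(t)\le e^{-ct\lambda_1} + \mathbb Q(\text{bad }\omega)\cdot(\text{crude bound})$ and optimizing $m$ via \eqref{eq:t_cond} yields $\log\Lambda(t)\le -\widetilde K h(t)$.

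The main obstacle I anticipate is twofold and both difficulties are genuinely fractal: (i) making the Temple-inequality argument work on $\cK^{\langle m\rangle}$ requires the spectral gap $\lambda_2^m$ of the \emph{reflected} operator and an a priori bound $\lambda_1(H_m^{N,V})\le\langle H_m^{N,V}\psi_1,\psi_1\rangle$ that separates $\lambda_1$ from $\lambda_2^m$ with the right room — the clean scaling \eqref{eq:2.8} of the Neumann eigenvalues and the Good Labeling Property (which gives the well-defined folding $\pi_m$ and hence the identification of $V^\omega_m$ with a potential that is essentially $\pi_m$-periodic up to the harmless discrepancy noted after \eqref{eq:periodized_pot}) are exactly what rescue this; (ii) the vertex count $\#\cV_0^{\langle m\rangle}\asymp L^{md}$ from \eqref{eq:number-points} must match, up to constants, the volume $\mu(\cK^{\langle m\rangle})\asymp L^{md}$ so that the "number of relevant sites" and the "kinetic scale $x_t^{-\alpha}$" are dual under \eqref{eq:t_cond} — on irregular nested fractals with $d_J\ne d_w$ one has to be careful that it is $\alpha$ (not $\alpha/(d_J-1)$) that controls $\phi(-\mathcal L)$ near zero, which is guaranteed by \textbf{(B)}. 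Once these are in place, the passage from $\Lambda(t)$ back to $\mathcal N(\lambda)$ is a routine application of the exponential Tauberian theorem, quoted as needed in Section 4.
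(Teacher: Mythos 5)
Your plan matches the paper's actual route: the statement is obtained from the two-sided asymptotics of the Laplace transform $\Lambda(t)$ via the mixed-type Tauberian theorem of \cite[Theorem 5.1]{bib:KaPP3}, with the upper bound on $\Lambda$ proved by Temple's inequality applied to the Neumann operator with the constant $\psi_1^M$ as trial vector together with the Bernstein/binomial estimate for the number of small $\xi_v$'s, and the lower bound on $\Lambda$ proved by restricting the Dirichlet Feynman--Kac trace to a single resonant-scale complex on the event $\{\xi_v\le\kappa\}$ for the nearby vertices and controlling the resulting ground-state eigenvalue by a perturbation bound (Lemma \ref{prop:lemma-41}) together with the Chen--Song estimate $\lambda_1(\Delta)\le\phi(\mu_1(\Delta))$. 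One small misattribution: \textbf{(W5)} (separation from zero) is used only for the \emph{upper} bound on $\Lambda$ (it is what makes Temple's inequality produce a nontrivial lower bound on $\lambda_1^{N,M,V^\omega_M}$), not for the lower bound — the paper's Theorem \ref{th:main-2} assumes \textbf{(W1)}--\textbf{(W4)} only, and there \textbf{(W4)} both truncates the range and supplies the $L^1$ bound on $V_\kappa$ that enters Lemma \ref{prop:lemma-41}.
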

Observe that when $\lim_{\kappa\to 0}F_\xi(\kappa) >0$ (an atom at zero is present), then the statement (\ref{eq:main-statement}) becomes
\[
-K'\leq  \liminf_{\lambda\searrow 0}\lambda^{ \frac{d}{\alpha}}\log \mathcal N(\lambda) \leq \limsup_{\lambda\searrow 0}\lambda^{ \frac{d}{\alpha}}\log
\mathcal N(\lambda)\leq -\widetilde K'
\]
with certain positive $K',$ $\widetilde K'$, which agrees with the statement we have obtained for the Poisson potential, cf.\ \cite{bib:KaPP2,bib:HB-KK-MO-KPP1}.
We refer to \cite[Section 6]{bib:KaPP3} for further discussion of the dependence of the correction rate $g$ on the properties of $F_\xi$ close to zero.

As usual in problems of this type, we will not approach this theorem directly. Instead, we will investigate the behaviour of the Laplace transform of the IDS, $\Lambda,$ at infinity, and
then  use a Tauberian-type argument to derive bounds on the IDS.

\subsection{The upper bound for the Laplace transform of the IDS}
We start with the upper bound, which permits to correctly identify  rate function.

\begin{theorem}\label{th:main-alloy-type-fractals}
 Let $\cK^{\langle \infty \rangle}$ be an USNF with the GLP and let the assumptions \textup{\textbf{(B)}}, \textup{\textbf{(Q1)}}-\textup{\textbf{(Q2)}}, \textup{\textbf{(W1)}}-\textup{\textbf{(W3)}}, and  \textup{\textbf{(W5)}} hold.  Let $h$ be given by (\ref{eq:x_i_h}).
 Then there exists $K_1>0$ such that
 \begin{equation}\label{eq:upper-poiss-1}
\limsup_{t\to\infty} \frac{ \log  \Lambda(t)}{t^{ \frac{d}{d+\alpha}}\cdot h(t)^{ \frac{\alpha}{d+\alpha}}}\leq -K_1.
\end{equation}
In particular, when $F_\xi(0) > 0$, then
\begin{equation}\label{eq:upper-poiss-2}
\limsup_{t\to\infty} \frac{\log \Lambda(t)}{t^{ \frac{d}{d+\alpha}}}\leq -K_1 \cdot \left(\log\frac{1}{F_\xi(0)}\right)^{ \frac{\alpha}{d+\alpha}}.
\end{equation}
\end{theorem}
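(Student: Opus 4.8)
The plan is to bound $\Lambda(t)$ by a single finite-volume trace, reduce that trace to the bottom eigenvalue of a Neumann--Schr\"odinger operator on $\cK^{\langle n\rangle}$, estimate the probability that this eigenvalue is small, and finally integrate and optimize over the level $n$. Since $\mathbb E_{\mathbb Q}\Lambda_M^{N,V_M^\omega}(t)$ decreases to $\Lambda(t)$ by Proposition~\ref{thm:convergence_with_star}, for every sufficiently large level $n$ and every $t>1$ one has $\Lambda(t)\le\mathbb E_{\mathbb Q}\Lambda_n^{N,V_n^\omega}(t)=\mu(\cK^{\langle n\rangle})^{-1}\mathbb E_{\mathbb Q}\,\tr e^{-tH_n^{N,V_n^\omega}}$. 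Using the elementary bound $\tr e^{-tH}\le e^{-(t-1)\lambda_1(H)}\tr e^{-H}$ for $t\ge1$, then $V_n^\omega\ge0$ together with the min--max principle to replace $H_n^{N,V_n^\omega}$ by $H_n^{N,0}$ inside the remaining trace, and finally Lemma~\ref{lem:regularity_of_p}(b) with $\mu(\cK^{\langle n\rangle})=L^{nd}$ to get $\tr T_1^n\le c\,\mu(\cK^{\langle n\rangle})$, we obtain a constant $c>0$ with
\[
\Lambda(t)\ \le\ c\,\mathbb E_{\mathbb Q}\big[e^{-(t-1)\lambda_1^{N,n,V_n^\omega}}\big],\qquad t>1,\ n\text{ large}.
\]
It thus remains to estimate $\mathbb Q\big(\lambda_1^{N,n,V_n^\omega}\le a\big)$ for small $a$.

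The heart of the argument is the lower bound for $\lambda_1^{N,n,V_n^\omega}$. Fix a large level $n$ and a large constant $B$, and suppose $0<a\le a_n:=\widetilde C_1L^{-n\alpha}/B$. By assumption \textbf{(B)} and \eqref{eq:2.8}, the free Neumann operator $\phi(-\mathcal L_n)$ has constant ground state $\psi_1^n\equiv L^{-nd/2}$ (eigenvalue $0$) and spectral gap $\lambda_2^n=\phi(L^{-nd_w}\mu_2^1)\ge\widetilde C_1L^{-n\alpha}\ge Ba$. If $\lambda_1^{N,n,V_n^\omega}\le a$, then the normalized ground state $\psi_0$ satisfies both $\langle\psi_0,\phi(-\mathcal L_n)\psi_0\rangle\le a$ and $\int_{\cK^{\langle n\rangle}}V_n^\omega|\psi_0|^2\le a$; decomposing $\psi_0=c_1\psi_1^n+\psi_0^\perp$ along $\psi_1^n$, the first bound forces $\|\psi_0^\perp\|^2\le a/\lambda_2^n\le1/B$, i.e.\ $\psi_0$ is within $1/B$ in $L^2$ of the constant function --- this closeness, obtained from the spectral gap of $\phi(-\mathcal L_n)$, is the role played by the Temple inequality in the argument. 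On the other hand \textbf{(W5)} yields $V_n^\omega(x)\ge A_0\sum_{v\in\cV_0^{\langle n\rangle}}\xi_v\mathbf 1_{\mathcal C_{m_1}(v)}(x)$ on $\cK^{\langle n\rangle}$ (because $v\in\pi_n^{-1}(v)$), while the cells $\mathcal C_{m_1}(v)$ have $\mu$-measure $\gtrsim L^{m_1d}$ and bounded overlap; combining these with $|c_1|^2\ge1-1/B$ one gets, for every threshold $\theta>0$,
\[
a\ \ge\ A_0\theta\!\!\sum_{v:\,\xi_v>\theta}\!\!\int_{\mathcal C_{m_1}(v)}\!\!|\psi_0|^2\ \ge\ A_0\theta\Big(\tfrac14\,L^{(m_1-n)d}\,\#\{v\in\cV_0^{\langle n\rangle}:\xi_v>\theta\}\;-\;C_*/B\Big),
\]
$C_*$ being the overlap constant. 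Fixing $\varepsilon\in(0,\tfrac12)$, taking $B$ so large that $C_*/B$ is negligible against $\varepsilon$ and then $\theta$ a suitable constant multiple of $a$, we conclude that on $\{\lambda_1^{N,n,V_n^\omega}\le a\}$ at least a fraction $1-\varepsilon$ of the i.i.d.\ variables $\{\xi_v:v\in\cV_0^{\langle n\rangle}\}$ lie below $\theta$; the constants $\widetilde C_1$, $A_0$, $C_0$ of \eqref{eq:number-points} and $r_0$ of \eqref{eq:def-r0} combine so that $\theta$ may be taken equal to a constant multiple of $D_0a$, with $D_0$ as in \eqref{eq:d_0_alloytype}.

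A binomial estimate now gives, for $a$ small, $\mathbb Q\big(\lambda_1^{N,n,V_n^\omega}\le a\big)\le(\#\cV_0^{\langle n\rangle}+1)\big(2^{c(\varepsilon)}F_\xi(\theta)^{1-\varepsilon}\big)^{\#\cV_0^{\langle n\rangle}}$ with $c(\varepsilon)\to0$ as $\varepsilon\to0$; choosing $\varepsilon$ small this is $\le(\mathrm{poly})\cdot e^{-c_0L^{nd}g(\kappa_0/a)}$ for some $c_0\in(0,1)$ and $\kappa_0>0$ --- when $F_\xi(0^+)=0$ the entropy factor is absorbed for small $a$, and when $F_\xi(0^+)\in(0,1)$ (equality with $1$ being excluded by nondegeneracy of the $\xi_v$) one uses the continuity of $F_\xi$ near $0$ from \textbf{(Q2)}. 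Feeding this into $\mathbb E_{\mathbb Q}[e^{-(t-1)\lambda_1^{N,n,V_n^\omega}}]=(t-1)\int_0^\infty e^{-(t-1)a}\mathbb Q(\lambda_1^{N,n,V_n^\omega}\le a)\,da$, splitting at $a_n$ and bounding the remaining tail crudely, yields for every large $n$
\[
\Lambda(t)\ \le\ c\Big[(\mathrm{poly})\,e^{-c_0L^{nd}\,g(\kappa_0/a_n)}\;+\;e^{-(t-1)a_n}\Big],\qquad a_n=\widetilde C_1L^{-n\alpha}/B .
\]
Given $t$, choose $n=n(t)$ so that $L^{n(t)}$ is the largest power of $L$ not exceeding $x_t=j^{-1}(t)$; then $L^{n(t)d}\asymp x_t^d$ and $a_{n(t)}\asymp x_t^{-\alpha}$, and, enlarging $B$ once more if necessary, $\kappa_0/a_{n(t)}\ge x_t^\alpha$, so that $g(\kappa_0/a_{n(t)})\ge g(x_t^\alpha)=h(t)$ by monotonicity of $g$. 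Since moreover $(t-1)a_{n(t)}\asymp t\,x_t^{-\alpha}=x_t^dh(t)=t^{d/(d+\alpha)}h(t)^{\alpha/(d+\alpha)}$ by \eqref{eq:t_cond}, both terms on the right are $\exp(-\mathrm{const}\cdot x_t^dh(t))$ up to a polynomial prefactor, which is negligible on the exponential scale because $x_t\to\infty$ and $h$ is bounded below. This proves \eqref{eq:upper-poiss-1}; and \eqref{eq:upper-poiss-2} follows immediately, since $F_\xi(0)>0$ forces $g(x)\to\log(1/F_\xi(0))$, hence $h(t)\to\log(1/F_\xi(0))$, as $t\to\infty$.

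The main obstacle is the estimate of the second paragraph. One cannot afford to require \emph{all} the $\xi_v$ in $\cK^{\langle n\rangle}$ to be small, since the complementary event has probability bounded away from $0$; the argument must instead exploit quantitatively that a ground state of energy $\le a$, being $O(1/B)$-close in $L^2$ to the constant by the spectral gap (where \textbf{(B)} and the constant $\widetilde C_1$ enter), must still carry a definite amount of mass on all but an $\varepsilon$-fraction of the small cells $\mathcal C_{m_1}(v)$ on which $V_n^\omega\ge A_0\xi_v$ by \textbf{(W5)}. Balancing the fraction $\varepsilon$, the gap-to-energy ratio $B$ and the combinatorial entropy loss simultaneously, while keeping the threshold $\theta$ proportional to $D_0a$ with the specific constant \eqref{eq:d_0_alloytype}, requires carefully tracking the fractal geometry --- the count bound $C_0$ and the maximal rank $r_0$ --- against the spectral-gap constant $\widetilde C_1$; this is the technical core. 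The remaining ingredients --- the trace bounds, the binomial estimate, and the scale optimization --- are routine.
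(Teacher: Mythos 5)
Your proof is correct, and its overall architecture matches the paper's: bound $\Lambda(t)$ by a single Neumann trace on $\cK^{\langle n\rangle}$, reduce the trace to the bottom Neumann eigenvalue via $\tr e^{-tH}\le e^{-(t-1)\lambda_1}\tr T_1^n$ with Lemma~\ref{lem:regularity_of_p}(b), show that $\lambda_1^{N,n,V_n^\omega}\le a$ (for $a\lesssim L^{-n\alpha}$) forces all but an $\varepsilon$-fraction of the $\xi_v$ to lie below a threshold $\asymp a$, apply the Bernstein-type estimate of Proposition~\ref{prop:lemma-35}, and finally calibrate $n=n(t)$ with $L^n\asymp x_t$. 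Where you genuinely diverge from the paper is in how the spectral input is extracted. The paper truncates the field ($\widetilde\xi_v=\xi_v\wedge D_0/L^{M\alpha}$), passes to $\widetilde V_M^\omega$, and applies Temple's inequality (Proposition~\ref{prop:temple}, Lemma~\ref{lem:l-33}) to $H_M^{N,\widetilde V_M^\omega}$ with the deterministic constant test function $\psi_1^M$; truncation is necessary there precisely to keep the Temple condition $\langle\psi_1^M,H\psi_1^M\rangle<\lambda_2^M$ valid against unbounded $\xi_v$. You instead argue directly on the (random) normalized ground state $\psi_0$: the two nonnegative pieces of $\langle\psi_0,H\psi_0\rangle=\lambda_1\le a$ are each $\le a$, the spectral gap $\lambda_2^n\ge\widetilde C_1L^{-n\alpha}$ forces $\|\psi_0-c_1\psi_1^n\|_2^2\le a/\lambda_2^n\le 1/B$, and then the cheap pointwise bound $|\psi_0|^2\ge\frac12|c_1\psi_1^n|^2-|\psi_0^\perp|^2$ on the disjoint cells $\mathcal C_{m_1}(v)$ together with \textbf{(W5)} yields the same conclusion as Lemma~\ref{lem:34-alloy-type}. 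This dispenses with both the truncation and Temple's inequality, and trades the paper's deterministic trial function for the random eigenfunction; the cost is that you must control the geometry of $\psi_0$, but the spectral gap makes that effortless here. Your subsequent bookkeeping — estimating $\mathbb Q(\lambda_1\le a)$ and then bounding $\mathbb E_{\mathbb Q}[e^{-(t-1)\lambda_1}]\le\mathbb Q(\lambda_1\le a_n)+e^{-(t-1)a_n}$ — is just a repackaging of the paper's split over $\mathcal A_{M,\delta}$ and $\mathcal A_{M,\delta}^c$, and the scale calibration via $x_t$ and \eqref{eq:t_cond} is identical. (One small imprecision: you say $\psi_0$ is within $1/B$ of the constant in $L^2$, when you actually obtain $\|\psi_0^\perp\|^2\le1/B$, i.e.\ distance $1/\sqrt B$; this is harmless.)
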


As the starting point for the proof of this theorem we collect some auxiliary estimates and lemmas.
From Proposition \ref{thm:convergence_with_star} we have that for sufficiently large  $M\in\mathbb Z_+$ and all $t>0$ it holds that
\[\Lambda(t)\leq \mathbb{E}_{\mathbb{Q}} \Lambda_{M}^{N,  V_M^\omega}(t)  =   \mathbb{E}_{\mathbb{Q}}  \left[\frac{1}{\mu(\mathcal K^{\langle M\rangle})}\int_{\mathcal K^{\langle M\rangle}} p_{M}(t,x,x)\mathbf E_{M,t}^{x,x}\left[{\rm e}^{-\int_0^t V_M^\omega(X_s^M){\rm d}s}\right]\mu({\rm d}x)\right],\]
and the right-hand side integral is the trace of the operator of the reflected subordinate process on $\mathcal K^{\langle M\rangle}$ at  time $t,$ evolving in presence of the potential $V_M^\omega(x),$
i.e. for every $t>0$ and $M\in\mathbb Z_+$  large enough
\[\Lambda(t)\leq \frac{1}{\mu(\mathcal K^{\langle M\rangle})}   \mathbb{E}_{\mathbb{Q}}  \mbox{Tr}\,T_t^{N,M, V_M^\omega}.\]
It can be further estimated as (cf.\ \cite[p.\ 3921]{bib:KaPP})
\begin{equation}\label{eq:to}
\frac{1}{\mu(\mathcal K^{\langle M\rangle})}   \mathbb{E}_{\mathbb{Q}}
\left[{\rm e}^{-(t-1)\lambda_1^{N,M, V_M^\omega}}\right]\sup_{x\in \mathcal K^{\langle M\rangle}}p_M(1,x,x),  \quad t >1,
\end{equation}
where
$\lambda_1^{N,M, V_M^{\omega}}$ is the ground state eigenvalue of the Schr\"odinger operator $H_{M}^{N, V_M^\omega} =\phi(-\mathcal L_M)+V_M^{\omega}$, see  Section \ref{sec:RSS}. As we have
\begin{align}\label{eq:diag_est}
A^*:=\sup_{M\geq 0} \frac{1}{\mu(\mathcal K^{\langle M\rangle})}{\sup_{x\in\mathcal K^{\langle M\rangle}}p_M(1,x,x)}<\infty,
\end{align}
 see Lemma \ref{lem:regularity_of_p} (b),  we are left with estimating from below the eigenvalue in the exponent in (\ref{eq:to}).

Fix $M\in \mathbb Z_+$  (note that automatically $M > m_1$, where $m_1$ is the negative integer appearing in \textbf{(W5)}).
We replace the random variables $\xi_v$ with truncated random variables $\widetilde\xi_v,$ where
\begin{equation}\label{eq:q-diminished-alloy-type}
\widetilde{\xi}_v:= \xi_v\wedge\frac{D_0}{L^{ M\alpha}}, \; v\in \cV_0^{\langle M\rangle}.
\end{equation}
 Observe that by \textbf{(W5)} we have
\begin{equation}\label{eq:perodized-pot-damped}
V_M^\omega(x) \geq \sum_{v \in \cV_0^{\langle M\rangle}}  \widetilde{\xi}_v(\omega) W(x,v) \geq A_0 \sum_{v \in \cV_0^{\langle M\rangle}}  \widetilde{\xi}_v(\omega) \1_{\cC_{m_1}(v)}(x) =: \widetilde V_M^\omega(x),\quad x \in \mathcal{K}^{\langle \infty \rangle}.
\end{equation}
As $ \lambda_1^{N,M, \widetilde V_M^\omega}\leq \lambda_1^{N,M, V_M^\omega},$  it is enough to find a lower bound for $ \lambda_1^{N,M, \widetilde V_M^\omega}.$ To this end we will use Temple's inequality.

 \begin{proposition}[{Temple's inequality, \cite[Theorem XIII.5]{bib:RS}}] \label{prop:temple} Suppose $(H,\mathcal D(H))$ is a self-adjoint operator  on a Hilbert space with inner product $\langle \cdot,\cdot \rangle$ such that $\lambda_1:=\inf\sigma(H)$ is an isolated eigenvalue and let $\mu\leq \inf(\sigma(H)\setminus \{\lambda_1\}).$  Then for any $\psi\in\mathcal D(H)$ which satisfies
	\begin{equation}\label{eq:temple-condition}
	\langle \psi, H\psi\rangle <\mu\quad\mbox{ and } \quad  \|\psi\|=1
	\end{equation}
	the following estimate
	\begin{equation}\label{eq:temple-ineq}
	\lambda_1\geq \langle \psi, H\psi\rangle - \frac{\langle H\psi, H\psi\rangle-\langle \psi,H\psi\rangle^2}{\mu-\langle \psi, H\psi\rangle}.
	\end{equation}
	holds.
\end{proposition}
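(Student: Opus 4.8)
The plan is to give the classical spectral-theoretic proof of Temple's inequality; the only tool is the spectral theorem for the self-adjoint operator $H$, used together with the spectral-gap hypothesis.

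First I would pin down the location of the spectrum. Since $\lambda_1 = \inf\sigma(H)$ is an isolated point of $\sigma(H)$ and $\mu \le \inf(\sigma(H)\setminus\{\lambda_1\})$, we have $\sigma(H)\subseteq\{\lambda_1\}\cup[\mu,\infty)$. I would also record at the outset that the hypotheses are vacuous unless $\lambda_1<\mu$: for every unit vector $\psi\in\mathcal D(H)$ one has $\langle\psi,H\psi\rangle\ge\lambda_1\|\psi\|^2=\lambda_1$, so the assumption $\langle\psi,H\psi\rangle<\mu$ already forces $\mu>\lambda_1$, and the cases $\mu\le\lambda_1$ need not be considered.

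Next I would pass to the scalar spectral measure. Let $E$ be the projection-valued measure of $H$ and put $\nu(\cdot):=\langle\psi,E(\cdot)\psi\rangle$, a Borel probability measure supported in $\{\lambda_1\}\cup[\mu,\infty)$. Because $\psi\in\mathcal D(H)$ we have $\int t^2\,\nu({\rm d}t)=\|H\psi\|^2<\infty$, hence also $\int|t|\,\nu({\rm d}t)<\infty$ by Cauchy--Schwarz, so the numbers $a:=\langle\psi,H\psi\rangle=\int t\,\nu({\rm d}t)$ and $b:=\langle H\psi,H\psi\rangle=\int t^2\,\nu({\rm d}t)$ are finite and the quadratic $t\mapsto(t-\lambda_1)(t-\mu)$ is $\nu$-integrable. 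The decisive pointwise fact is that $(t-\lambda_1)(t-\mu)\ge 0$ on the support of $\nu$: it vanishes at $t=\lambda_1$, and for $t\ge\mu>\lambda_1$ both factors are nonnegative. Integrating against $\nu$ and using $\nu(\mathbb{R})=1$ yields
\begin{equation*}
0\le\int(t-\lambda_1)(t-\mu)\,\nu({\rm d}t)=b-(\lambda_1+\mu)\,a+\lambda_1\mu .
\end{equation*}

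Finally I would carry out the elementary rearrangement. The last display gives $b-\mu a\ge\lambda_1(a-\mu)$; dividing by $a-\mu$, which is \emph{negative} precisely by the assumption $\langle\psi,H\psi\rangle<\mu$, reverses the inequality and produces
\begin{equation*}
\lambda_1\ge\frac{b-\mu a}{a-\mu}=\frac{\mu a-b}{\mu-a}=a-\frac{b-a^2}{\mu-a},
\end{equation*}
which is exactly \eqref{eq:temple-ineq}. I do not expect a genuine obstacle in this argument; the only points deserving a word of care are the finiteness of $a$ and $b$ and the integrability of $(t-\lambda_1)(t-\mu)$ against $\nu$ — this is where membership of $\psi$ in $\mathcal D(H)$ (and not merely in the form domain) is used — and the degenerate case $\sigma(H)=\{\lambda_1\}$, in which $\nu=\delta_{\lambda_1}$, $b=a^2=\lambda_1^2$, and \eqref{eq:temple-ineq} collapses to the trivial inequality $\lambda_1\ge\lambda_1$.
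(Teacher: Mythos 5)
Your proof is correct, and it is the standard argument for Temple's inequality: the paper itself offers no proof, citing Reed--Simon \cite[Theorem XIII.5]{bib:RS}, and the argument there is essentially the operator-inequality form of what you do (namely $\langle\psi,(H-\mu)(H-\lambda_1)\psi\rangle\geq 0$, which is your integral of $(t-\lambda_1)(t-\mu)$ against the scalar spectral measure). Your observations that $\langle\psi,H\psi\rangle<\mu$ forces $\mu>\lambda_1$, that $\psi\in\mathcal D(H)$ guarantees the needed integrability, and that division by the negative quantity $\langle\psi,H\psi\rangle-\mu$ reverses the inequality are exactly the points that make the rearrangement legitimate, so nothing is missing.
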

We  derive the following lemma.

\begin{lemma}\label{lem:l-33}  Let $\cK^{\langle \infty \rangle}$ be an USNF with the GLP and let the assumptions \textup{\textbf{(B)}}, \textup{\textbf{(Q1)}}-\textup{\textbf{(Q2)}}, \textup{\textbf{(W1)}}, and \textup{\textbf{(W5)}} hold.
 Then there exists $ M_2 \in \mathbb Z_+$ such that for any $M \geq M_2$ we have
\begin{equation}\label{lem:l-33-teza-alloy-type}
 \lambda_1^{N,M,V_M^\omega} \geq \lambda_1^{N,M,\widetilde V_M^\omega} \geq \frac{1}{L^{Md}}\left[ \int_{\mathcal{K}^{\left\langle M\right\rangle}} \widetilde{V}^\omega_M(x)\mu({ \rm d}x) - \frac{2\int_{\mathcal{K}^{\left\langle M\right\rangle}} (\widetilde{V}^{\omega}_M(x))^2\mu({\rm d}x)}{\widetilde{C}_{1}L^{ -M\alpha}} \right],
\end{equation}
with the constant $\widetilde{C}_1$ introduced in (\ref{eq:c-tilde}).
\end{lemma}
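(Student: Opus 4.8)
Here is a plan for establishing Lemma \ref{lem:l-33}.

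The first inequality $\lambda_1^{N,M,\widetilde V_M^\omega}\leq\lambda_1^{N,M,V_M^\omega}$ is immediate from \eqref{eq:perodized-pot-damped} (which gives $0\leq\widetilde V_M^\omega\leq V_M^\omega$) together with the min--max principle, so it remains to bound $\lambda_1^{N,M,\widetilde V_M^\omega}$ from below. The plan is to apply Temple's inequality (Proposition \ref{prop:temple}) to the self-adjoint operator $H:=H_M^{N,\widetilde V_M^\omega}=\phi(-\mathcal L_M)+\widetilde V_M^\omega$ on $L^2(\mathcal K^{\langle M\rangle},\mu)$, using as the test function the normalized ground state $\psi:=\psi_1^M\equiv L^{-Md/2}\mathbf 1_{\mathcal K^{\langle M\rangle}}$ of $\phi(-\mathcal L_M)$. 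Since $\widetilde V_M^\omega$ is bounded on $\mathcal K^{\langle M\rangle}$ (a finite sum of bounded terms), we have $\psi\in\mathcal D(H)=\mathcal D(\phi(-\mathcal L_M))$, and $\phi(-\mathcal L_M)\psi=\lambda_1^M\psi=0$ by \eqref{eq:2.9}. Consequently
\[
\langle\psi,H\psi\rangle=\frac{1}{L^{Md}}\int_{\mathcal K^{\langle M\rangle}}\widetilde V_M^\omega\,{\rm d}\mu,\qquad
\langle H\psi,H\psi\rangle=\|\widetilde V_M^\omega\psi\|^2=\frac{1}{L^{Md}}\int_{\mathcal K^{\langle M\rangle}}(\widetilde V_M^\omega)^2\,{\rm d}\mu,
\]
while $\langle H\psi,H\psi\rangle-\langle\psi,H\psi\rangle^2\geq0$ automatically by Cauchy--Schwarz (as $\|\psi\|=1$).

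For the gap parameter in Temple's inequality I would take $\kappa_M:=\widetilde C_1 L^{-M\alpha}$. Since adding the nonnegative potential $\widetilde V_M^\omega$ can only raise eigenvalues, $\inf(\sigma(H)\setminus\{\lambda_1^{N,M,\widetilde V_M^\omega}\})=\lambda_2^{N,M,\widetilde V_M^\omega}\geq\lambda_2^M=\phi(\mu_2^M)=\phi(L^{-Md_w}\mu_2^1)$ by \eqref{eq:2.8}. As $L^{-Md_w}\mu_2^1\to0$ when $M\to\infty$, the lower bound in \eqref{eq:phi_at_zero} applies for all $M$ larger than some $M_2\in\mathbb Z_+$ and yields $\phi(L^{-Md_w}\mu_2^1)\geq C_1(\mu_2^1)^{\alpha/d_w}L^{-M\alpha}=\widetilde C_1 L^{-M\alpha}=\kappa_M$, so that $\kappa_M\leq\inf(\sigma(H)\setminus\{\lambda_1\})$ as required (the ground state is isolated by the discreteness of the spectrum established in Section \ref{sec:RSS}).

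The main obstacle is verifying the remaining Temple hypothesis $\langle\psi,H\psi\rangle<\kappa_M$, and here I would in fact establish the quantitatively stronger bound $\langle\psi,H\psi\rangle\leq\kappa_M/2$. Using the explicit form $\widetilde V_M^\omega=A_0\sum_{v\in\cV_0^{\langle M\rangle}}\widetilde\xi_v\mathbf 1_{\cC_{m_1}(v)}$ with $0\leq\widetilde\xi_v\leq D_0 L^{-M\alpha}$ (by \eqref{eq:q-diminished-alloy-type}), the counting bound $\#\cV_0^{\langle M\rangle}\leq C_0 L^{Md}$ from \eqref{eq:number-points}, and the fact that a vertex of a $0$-complex lies in at most $r_0$ complexes of any scale $m_1\leq0$ (by \eqref{eq:def-r0} and self-similarity, whence $\mu(\cC_{m_1}(v))\leq r_0 L^{m_1 d}$, since $\mu(\mathcal K^{\langle m_1\rangle})=L^{m_1 d}$), one obtains
\[
\int_{\mathcal K^{\langle M\rangle}}\widetilde V_M^\omega\,{\rm d}\mu\leq A_0\,(D_0 L^{-M\alpha})\cdot(C_0 L^{Md})\cdot(r_0 L^{m_1 d})=\frac{\widetilde C_1}{4}\,L^{m_1 d}\,L^{Md-M\alpha}\leq\frac{\widetilde C_1}{4}\,L^{Md-M\alpha},
\]
where I have substituted $D_0=\widetilde C_1/(4A_0 C_0 r_0)$ from \eqref{eq:d_0_alloytype} and used $L^{m_1 d}\leq1$ (as $m_1<0$). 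Dividing by $L^{Md}$ gives $\langle\psi,H\psi\rangle\leq\tfrac14\widetilde C_1 L^{-M\alpha}=\kappa_M/4$, hence $\kappa_M-\langle\psi,H\psi\rangle\geq\kappa_M/2$. This is exactly the step that forces the particular value of $D_0$ in \eqref{eq:d_0_alloytype} (equivalently, the truncation level $D_0 L^{-M\alpha}$ in \eqref{eq:q-diminished-alloy-type}), and it is the only place where \eqref{eq:number-points} and the maximal rank $r_0$ enter.

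Finally, inserting these into the Temple estimate \eqref{eq:temple-ineq}, discarding the nonnegative term $\langle\psi,H\psi\rangle^2$ in the numerator and using $\kappa_M-\langle\psi,H\psi\rangle\geq\kappa_M/2$, I get
\[
\lambda_1^{N,M,\widetilde V_M^\omega}\geq\langle\psi,H\psi\rangle-\frac{2\langle H\psi,H\psi\rangle}{\kappa_M}
=\frac{1}{L^{Md}}\left[\int_{\mathcal K^{\langle M\rangle}}\widetilde V_M^\omega\,{\rm d}\mu-\frac{2\int_{\mathcal K^{\langle M\rangle}}(\widetilde V_M^\omega)^2\,{\rm d}\mu}{\widetilde C_1 L^{-M\alpha}}\right],
\]
which is precisely \eqref{lem:l-33-teza-alloy-type}; one takes $M_2$ to be the integer supplied by the gap estimate above, the Temple condition being satisfied for every $M\in\mathbb Z_+$.
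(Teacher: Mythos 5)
Your proof is correct and follows essentially the same route as the paper: Temple's inequality applied to $H_M^{N,\widetilde V_M^\omega}$ with the constant test function $\psi_1^M\equiv L^{-Md/2}$, the bound $\langle\psi,H\psi\rangle\leq\widetilde C_1/(4L^{M\alpha})$ via the truncation in \eqref{eq:q-diminished-alloy-type} together with \eqref{eq:number-points}, \eqref{eq:def-r0} and the choice of $D_0$, and the gap estimate $\lambda_2^M\geq\widetilde C_1 L^{-M\alpha}$ for $M\geq M_2$ from \eqref{eq:2.8} and \eqref{eq:phi_at_zero}. The only cosmetic difference is that you take $\mu=\kappa_M=\widetilde C_1 L^{-M\alpha}$ as the gap parameter whereas the paper takes $\mu=\lambda_2^M$ directly and then bounds it below by $\kappa_M$; these are equivalent.
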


\begin{proof}  Let $ M \in \mathbb Z_+.$ Since ${V}^{\omega}_M \geq \widetilde{V}^{\omega}_M,$ then we have $ \lambda_1^{N,M,V_M^\omega} \geq \lambda_1^{N,M,\widetilde V_M^\omega}.$ Temple's inequality will be applied to the operator $ H:= H_{M}^{N, \widetilde V_M^\omega} =\phi(-\mathcal L_M)+\widetilde V_M^{\omega}$ acting on $L^2(\mathcal{K}^{\langle M\rangle}),$ and $\mu = \lambda_2^{M}$
(nonrandom).
The spectrum of $ H:= H_{M}^{N, \widetilde V_M^\omega}$  is discrete and we have that
$$
\mu=\lambda_2^M \leq  \lambda_2^{N,M,\widetilde V_M^\omega}  =\inf \left(  \sigma\big(H_{M}^{N, \widetilde V_M^\omega}\big) \backslash \left\{\lambda_1^{N,M,\widetilde V_M^\omega}\right\}\right).
$$
To use the Temple inequality, we choose
 $\psi = \psi_1^M \equiv \frac{1}{L^{Md/2}}$ to be the normalized principal eigenfunction of the operator $ -\mathcal L_M$ corresponding to the eigenvalue $ \mu_1^M = 0.$  Consequently, $\phi(-\mathcal L_M)\psi = \lambda_1^M \psi = 0$, which implies
$$
\left\langle\psi, H_{M}^{N, \widetilde V_M^\omega} \psi\right\rangle = \left\langle\psi,\phi( -\mathcal L_M)\psi\right\rangle+ \left\langle\psi,\widetilde{V}_{M}^{\omega}\psi\right\rangle=
\left\langle\psi,\widetilde{V}_{M}^{\omega}\psi\right\rangle=
\frac{1}{L^{Md}}\int_{\mathcal{K}^{\left\langle M\right\rangle}} \widetilde{V}^\omega_M(x)\mu({\rm d}x).
$$

From the definition of $\widetilde{V}_M^\omega$  in \eqref{eq:perodized-pot-damped}, \eqref{eq:def-r0}, \eqref{eq:number-points} and \eqref{eq:q-diminished-alloy-type}  we get
\begin{eqnarray*}
\int_{\mathcal{K}^{\left\langle M\right\rangle}} \widetilde{V}_M^\omega(x)\mu({\rm d}x) & = & A_0 \sum_{v \in \cV_0^{\langle M\rangle}}  \widetilde{\xi}_v(\omega) \mu(\mathcal{K}^{\left\langle M\right\rangle} \cap \cC_{m_1}(v)) \\
&\leq&
   A_0 r_0 L^{m_1d}  \sum_{v \in \cV_0^{\langle M\rangle}}  \widetilde{\xi}_v(\omega)
\leq  A_0 r_0 C_0  L^{Md}\frac{D_0}{L^{M\alpha}}.
\end{eqnarray*}
Consequently, using the definition of $D_0,$ \eqref{eq:d_0_alloytype},
$$
\left\langle\psi, H_{M}^{N, \widetilde V_M^\omega}\psi\right\rangle
\leq
 \frac{A_0C_0 D_0 r_0}{L^{M\alpha}}  = \,\frac{\widetilde{C}_1 }{ 4L^{M\alpha}}.
$$
 On the other hand,  from  \eqref{eq:2.8}  and \eqref{eq:phi_at_zero} there is $M_2\in \mathbb Z_+$ such that for $M\geq M_2$
$$
\frac{\widetilde{C}_1}{L^{ M\alpha} }  <  \phi(\mu_{2}^{1}\cdot L^{-Md_w}) = \lambda_2^{M}=\mu,
$$
which means that the condition (\ref{eq:temple-condition}) is satisfied.

Further,
\begin{eqnarray}
\label{eq:temple_den_alloy_type}
\mu - \left\langle\psi, H_{M}^{N, \widetilde V_M^\omega}\psi\right\rangle &=&
\lambda_2^M-\left\langle\psi, H_{M}^{N, \widetilde V_M^\omega}\psi\right\rangle \nonumber\\
&\geq &
\frac{\widetilde{C}_1}{L^{ M\alpha}}-\frac{\widetilde{C}_1 }{ 4L^{M\alpha}}> \frac{\widetilde{C}_1}{2L^{ M\alpha}}.
\end{eqnarray}
Inserting (\ref{eq:temple_den_alloy_type}) into the formula in Temple's inequality (\ref{eq:temple-ineq}) we obtain:
\begin{eqnarray*}
 \lambda_1^{N,M,\widetilde V_M^\omega}  & \geq & \langle \psi,  H_{M}^{N, \widetilde V_M^\omega} \psi\rangle - \frac{2 \langle  H_{M}^{N, \widetilde V_M^\omega}\psi,  H_{M}^{N, \widetilde V_M^\omega} \psi\rangle-2\langle \psi, H_{M}^{N, \widetilde V_M^\omega} \psi\rangle^2}{\widetilde{C}_1L^{ -M\alpha}}\\
& \geq & \frac{1}{L^{Md}}\left[ \int_{\mathcal{K}^{\left\langle M\right\rangle}} \widetilde{V}^{\omega}_M(x)\mu({\rm d}x) - \frac{2\int_{\mathcal{K}^{\left\langle M\right\rangle}} (\widetilde{V}^{\omega}_M(x))^2\mu({\rm d}x)}{ \widetilde{C}_1L^{-M\alpha}} \right].
\end{eqnarray*}

\end{proof}

This estimate will be of use  for those configurations for which the number of sites with \linebreak $\xi_v(\omega) > \frac{D_0}{L^{M\alpha}}$ is large enough, i.e.\ on the set

$$
\mathcal{A}_{M,\delta}  =  \left\{ \omega: \#\left\{ v \in \cV_0^{\langle M\rangle} : \xi_v(\omega) > \frac{D_0}{L^{M\alpha}}\right\} \geq \delta \cdot k_0^{\langle M\rangle}\right\},
$$
where $\delta \in (0,1)$ is fixed.
Recall that $k_0^{\langle M\rangle}$ is the cardinality of $\cV_0^{\langle M\rangle}$; the estimate for $k_0^{\langle M\rangle}$ is given in \eqref{eq:number-points}.

We are ready to prove another lemma.

\begin{lemma}\label{lem:34-alloy-type}
 Let $\cK^{\langle \infty \rangle}$ be an USNF with the GLP and let the assumptions \textup{\textbf{(B)}}, \textup{\textbf{(Q1)}}-\textup{\textbf{(Q2)}}, \textup{\textbf{(W1)}}, and \textup{\textbf{(W5)}} hold. Let $\delta > 0$ be fixed (later it will be chosen not depending on $M$).  Then for any $M \geq M_2$ ($M_2$ comes from Lemma \ref{lem:l-33}) and $\omega \in \mathcal{A}_{M,\delta}$ we have
$$
\lambda_{1}^{N,M}(\omega)  \geq \frac{B_0 \delta}{L^{M\alpha}}.
$$
with $B_0:= (\widetilde C_1 L^{m_1d})/(8C_0 r_0)$.
\end{lemma}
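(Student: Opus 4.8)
The plan is to plug two elementary estimates on $\widetilde V_M^\omega$, valid on the event $\mathcal A_{M,\delta}$, into the bound already obtained via Temple's inequality in Lemma~\ref{lem:l-33}. Since $V_M^\omega\geq\widetilde V_M^\omega$, that lemma gives, for $M\geq M_2$,
\[
\lambda_1^{N,M,V_M^\omega}\;\geq\;\lambda_1^{N,M,\widetilde V_M^\omega}\;\geq\;\frac{1}{L^{Md}}\left[\int_{\cK^{\langle M\rangle}}\widetilde V_M^\omega(x)\,\mu({\rm d}x)-\frac{2L^{M\alpha}}{\widetilde C_1}\int_{\cK^{\langle M\rangle}}(\widetilde V_M^\omega(x))^2\,\mu({\rm d}x)\right],
\]
so it is enough to bound the first integral from below and the second from above (in terms of the first). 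Recall that $\widetilde V_M^\omega=A_0\sum_{v\in\cV_0^{\langle M\rangle}}\widetilde\xi_v\,\1_{\cC_{m_1}(v)}$ with $\widetilde\xi_v=\xi_v\wedge(D_0L^{-M\alpha})$.

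\textbf{First moment.} On $\mathcal A_{M,\delta}$ at least $\delta k_0^{\langle M\rangle}$ of the vertices $v\in\cV_0^{\langle M\rangle}$ satisfy $\xi_v(\omega)>D_0L^{-M\alpha}$, hence $\widetilde\xi_v(\omega)=D_0L^{-M\alpha}$ for those. Every $v\in\cV_0^{\langle M\rangle}$ lies in at least one $m_1$-complex contained in $\cK^{\langle M\rangle}$ (as $m_1<0$, the $m_1$-complexes refine the $0$-complexes and $v$ is a vertex of a $0$-complex lying inside $\cK^{\langle M\rangle}$), so $\mu(\cK^{\langle M\rangle}\cap\cC_{m_1}(v))\geq L^{m_1d}$. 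Using $k_0^{\langle M\rangle}\geq L^{Md}$ from \eqref{eq:number-points} and the identity $A_0D_0=\widetilde C_1/(4C_0r_0)$ from \eqref{eq:d_0_alloytype}, I would get
\[
\frac{1}{L^{Md}}\int_{\cK^{\langle M\rangle}}\widetilde V_M^\omega(x)\,\mu({\rm d}x)\;\geq\;\frac{A_0D_0L^{m_1d}\delta}{L^{M\alpha}}\;=\;\frac{2B_0\delta}{L^{M\alpha}}.
\]

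\textbf{Second moment.} The key geometric input is that the family $\{\cC_{m_1}(v):v\in\cV_0^{\langle M\rangle}\}$ covers $\cK^{\langle\infty\rangle}$ with bounded multiplicity: any point belongs to at most $r_0$ distinct $m_1$-complexes (self-similarity together with \eqref{eq:def-r0}), each $m_1$-complex is by nesting contained in a single $0$-complex, and a $0$-complex meets $\cV_0^{\langle\infty\rangle}$ precisely in its $k$ vertices, whence $\#\{v\in\cV_0^{\langle M\rangle}:x\in\cC_{m_1}(v)\}\leq r_0k$ for every $x$. Therefore $\sup_x\widetilde V_M^\omega(x)\leq A_0D_0r_0k\,L^{-M\alpha}$, and since $\widetilde V_M^\omega\geq0$,
\[
\int_{\cK^{\langle M\rangle}}(\widetilde V_M^\omega(x))^2\,\mu({\rm d}x)\;\leq\;\frac{A_0D_0r_0k}{L^{M\alpha}}\int_{\cK^{\langle M\rangle}}\widetilde V_M^\omega(x)\,\mu({\rm d}x).
\]
Substituting this into the displayed lower bound and factoring out $\int\widetilde V_M^\omega$ gives
\[
\lambda_1^{N,M,V_M^\omega}\;\geq\;\frac{1}{L^{Md}}\left(\int_{\cK^{\langle M\rangle}}\widetilde V_M^\omega(x)\,\mu({\rm d}x)\right)\left(1-\frac{2A_0D_0r_0k}{\widetilde C_1}\right).
\]
Now $2A_0D_0r_0k/\widetilde C_1=k/(2C_0)\leq 1/2$, because $k=k_0^{\langle 0\rangle}\leq C_0$ by \eqref{eq:number-points}; combining with the first-moment bound yields $\lambda_1^{N,M,V_M^\omega}\geq\frac12\cdot\frac{2B_0\delta}{L^{M\alpha}}=\frac{B_0\delta}{L^{M\alpha}}$, which is the claim (here $\lambda_1^{N,M}(\omega)$ denotes $\lambda_1^{N,M,V_M^\omega}$).

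The one point that is not pure bookkeeping is the bounded-overlap estimate for $\{\cC_{m_1}(v)\}$: it rests on the nesting axiom (an $m_1$-complex with $m_1<0$ sits inside one $0$-complex, and $\cV_0^{\langle\infty\rangle}$ meets a $0$-complex only in its vertices) together with the scale invariance that lets $r_0$ bound the number of $m_1$-complexes through any point. Everything else is a matter of tracking the constants $\widetilde C_1,C_0,r_0,A_0$ and using exactly the value of $D_0$ fixed in \eqref{eq:d_0_alloytype} and the relation $C_0\geq k$, which are precisely what is needed to absorb the second-moment correction into one half of the first moment.
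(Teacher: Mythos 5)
Your proof is correct and gives exactly the claimed bound, but the route is slightly different from the paper's in one place: the second--moment estimate. The paper observes (without detailed justification) that, since $m_1<0$, the sets $\cC_{m_1}(v)$, $v\in\cV_0^{\langle M\rangle}$, are $\mu$-a.e.\ disjoint, which makes $(\widetilde V_M^\omega)^2=A_0^2\sum_v\widetilde\xi_v^2\,\1_{\cC_{m_1}(v)}$ hold a.e.\ and yields $\int(\widetilde V_M^\omega)^2\le A_0^2 r_0 L^{m_1 d}\sum_v\widetilde\xi_v^2$, which is then combined with $\sum_v\widetilde\xi_v^2\le\frac{D_0}{L^{M\alpha}}\sum_v\widetilde\xi_v$. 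You instead use only the weaker statement that the covering $\{\cC_{m_1}(v)\}_v$ has multiplicity at most $r_0 k$ (at most $r_0$ $m_1$-complexes through any point, each with $k$ vertices), deduce $\sup_x\widetilde V_M^\omega(x)\le A_0 D_0 r_0 k\,L^{-M\alpha}$, and bound $\int(\widetilde V_M^\omega)^2\le(\sup\widetilde V_M^\omega)\int\widetilde V_M^\omega$. This produces the factor $1-\tfrac{k}{2C_0}$ where the paper gets $1-\tfrac{1}{2C_0}$, but both are at least $\tfrac12$ (you use $k\le C_0$, the paper uses $C_0\ge1$), so the conclusion $\lambda_1^{N,M,V_M^\omega}\ge B_0\delta L^{-M\alpha}$ is identical. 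Your version has the virtue of not relying on the a.e.-disjointness of the $\cC_{m_1}(v)$'s (which is the one assertion in the paper's proof that is stated but not argued); the constant $D_0$ in \eqref{eq:d_0_alloytype} happens to have $C_0$ built in exactly so that the extra factor $k$ is absorbed. One small remark for precision: your overlap bound $r_0$ on $m_1$-complexes through a point invokes $r_0$ at scale $m_1$ rather than scale $0$, which is legitimate by self-similarity (the rank of a $0$-grid vertex with respect to $m_1$-complexes equals its rank with respect to $0$-complexes), but that one-line justification is worth spelling out.
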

\begin{proof}
 Let $M \geq M_2$. Recall that
\begin{equation}\label{eq:v_low}
\widetilde{V}_M^\omega(x)= A_0 \sum_{v \in \cV_0^{\langle M\rangle}}  \widetilde{\xi}_v(\omega) \1_{\cC_{m_1}(v)}(x), \quad x\in \mathcal K^{\langle M\rangle}.
\end{equation}
Since $m_1 < 0$ the sets $\mathcal C_{m_1}(v)$ are disjoint, so
$$
(\widetilde{V}_M^\omega(x))^2 = A_0^2 \sum_{v \in \cV_0^{\langle M\rangle}}  \widetilde{\xi}_v(\omega)^2  \1_{\cC_{m_1}(v)}(x), \quad x\in \mathcal K^{\langle M\rangle},
$$
and further 
\begin{equation}\label{eq:v_*_2}
\int_{\mathcal{K}^{\left\langle M\right\rangle}} (\widetilde{V}_M^\omega(x))^2\mu({ \rm d}x)  = A_0^2 \sum_{v \in \cV_0^{\langle M\rangle}}  \widetilde{\xi}_v(\omega)^2 \mu(\mathcal{K}^{\left\langle M\right\rangle} \cap \cC_{m_1}(v)) \leq A_0^2 r_0 L^{m_1d} \sum_{v \in \cV_0^{\langle M\rangle}}  \widetilde{\xi}_v(\omega)^2.
\end{equation}
Similarly, 
\begin{equation}\label{eq:int_v_low}
\int_{\mathcal{K}^{\left\langle M\right\rangle}} \widetilde{V}_M^\omega(x)\mu({ \rm d}x) = A_0 \sum_{v \in \cV_0^{\langle M\rangle}}  \widetilde{\xi}_v(\omega) \mu(\cC_{m_1}(v))  \geq A_0 L^{m_1d} \sum_{v \in \cV_0^{\langle M\rangle}}  \widetilde{\xi}_v(\omega).
\end{equation}
Observe also that
\begin{equation}\label{eq:nowe}
\sum_{v\in \cV_0^{\langle M\rangle}}\widetilde{\xi}^2_v(\omega)\leq\frac{D_0}{L^{ M\alpha}}
\sum_{v\in \cV_0^{\langle M\rangle}} \widetilde{\xi}_v(\omega),
\end{equation}
 so, by \eqref{eq:number-points} and the definition of the set $\mathcal A_{M,\delta},$ for $\omega \in \mathcal A_{M,\delta}$ we have
\begin{equation}\label{eq:nnowe}
\sum_{v\in \cV_0^{\langle M\rangle}}\xi_v(\omega)\geq \frac{D_0}{L^{ M\alpha}}\cdot\delta\cdot k_0^{\langle M\rangle}\geq
\frac{D_0}{L^{ M\alpha}}\cdot\delta\cdot L^{Md}.
\end{equation}
Inserting estimates (\ref{eq:v_*_2}) and  (\ref{eq:int_v_low}) into (\ref{lem:l-33-teza-alloy-type}), then using the definition of $D_0,$ the estimates \eqref{eq:nowe}, \eqref{eq:nnowe} and rearranging we get:
\begin{eqnarray*}
\lambda_{1}^{M,V^\omega_{M}} & \geq & \frac{1}{L^{Md}}\left[ A_0L^{m_1d}\sum_{v \in \cV_0^{\langle M\rangle}}  \widetilde{\xi}_v(\omega) - \frac{ 2 A_0^2 r_0 L^{m_1d}  \cdot \sum_{v \in \cV_0^{\langle M\rangle}}  \widetilde{\xi}_v(\omega)^2}{ \widetilde{C}_1L^{-M\alpha}} \right]\\
& \geq & \frac{1}{L^{Md}}\sum_{v \in V_0^{\langle M\rangle}}  \widetilde{\xi}_v(\omega) \left[ A_0L^{m_1d} - \frac{ 2 A_0^2 D_0 r_0 L^{m_1d}}{\widetilde{C}_1} \right]\\
& \geq &  \frac{\delta D_0}{L^{M\alpha}} \cdot A_0L^{m_1d} \cdot \left(1- \frac{1}{2C_0} \right) \\
& \geq &  \frac{\widetilde C_1 L^{m_1d} \delta}{8C_0 r_0 L^{M\alpha}}.
\end{eqnarray*}
and the proof is complete.
\end{proof}

Before we proceed with the proof of Theorem \ref{th:main-alloy-type-fractals}, we recall a Bernstein-type estimate for the binomial distribution (see e.g.  \cite[Lemma 3.5]{bib:KaPP3}) which will be used in the course of the proof.
\begin{proposition}\label{prop:lemma-35}
Let $(\Omega, \mathcal{F}, \mathbb{P})$ be a given probability space and let $S_n: \Omega \rightarrow \mathbb{R}$ be a random variable with the binomial distribution $B(n,p), n \geq 1, p \in (0,1).$ Then, for any $p,\gamma \in (0,1)$ such that $\gamma > p$,
\begin{equation}\label{eq:lemma-35}
\mathbb{P}\left(S_n \geq \gamma n \right) \leq \left(\left(\frac{1-p}{1-\gamma}\right)^{1-\gamma}
\left(\frac{p}{\gamma}\right)^\gamma\right)^n.
\end{equation}
\end{proposition}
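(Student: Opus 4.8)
The plan is to apply the exponential Chebyshev (Chernoff) inequality and then optimize the free parameter. Write $S_n=\sum_{i=1}^{n}Y_i$, where $Y_1,\dots,Y_n$ are i.i.d.\ with $\mathbb{P}(Y_i=1)=p$ and $\mathbb{P}(Y_i=0)=1-p$, so that $S_n$ has distribution $B(n,p)$. For every $t>0$, Markov's inequality applied to the nonnegative random variable $e^{tS_n}$ yields
\begin{equation}\label{eq:chernoff-step}
\mathbb{P}\left(S_n\geq \gamma n\right) = \mathbb{P}\!\left(e^{tS_n}\geq e^{t\gamma n}\right) \leq e^{-t\gamma n}\,\mathbb{E}\!\left[e^{tS_n}\right] = e^{-t\gamma n}\prod_{i=1}^{n}\mathbb{E}\!\left[e^{tY_i}\right] = \left(e^{-t\gamma}\bigl(1-p+pe^{t}\bigr)\right)^{n},
\end{equation}
where we used independence together with $\mathbb{E}[e^{tY_i}]=1-p+pe^{t}$.

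Next I would minimize the base $\varphi(t):=e^{-t\gamma}\bigl(1-p+pe^{t}\bigr)$ over $t>0$. Writing $s:=e^{t}$, we have $\varphi=(1-p)s^{-\gamma}+ps^{1-\gamma}$, and setting the derivative in $s$ to zero gives the unique critical point $s_\ast=\dfrac{\gamma(1-p)}{p(1-\gamma)}$. Since $\log\varphi(t)=-t\gamma+\log\bigl(1-p+pe^{t}\bigr)$ differs from the cumulant generating function $t\mapsto\log\mathbb{E}[e^{tY_1}]$ only by the linear term $-t\gamma$, it is convex in $t$, so the critical point $t_\ast:=\log s_\ast$ is the global minimizer of $\log\varphi$. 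The hypothesis $\gamma>p$ is equivalent to $\gamma(1-p)>p(1-\gamma)$, i.e.\ to $s_\ast>1$, hence $t_\ast>0$, and this minimizing $t$ is indeed admissible in \eqref{eq:chernoff-step}.

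Finally I would substitute $s=s_\ast$ back into \eqref{eq:chernoff-step}. A direct computation gives $1-p+ps_\ast=(1-p)\bigl(1+\tfrac{\gamma}{1-\gamma}\bigr)=\dfrac{1-p}{1-\gamma}$ and $s_\ast^{-\gamma}=\left(\dfrac{p(1-\gamma)}{\gamma(1-p)}\right)^{\gamma}$, so that
\begin{equation*}
\varphi(t_\ast)=\left(\frac{p(1-\gamma)}{\gamma(1-p)}\right)^{\gamma}\cdot\frac{1-p}{1-\gamma}=\left(\frac{p}{\gamma}\right)^{\gamma}\left(\frac{1-p}{1-\gamma}\right)^{1-\gamma}.
\end{equation*}
Raising this to the $n$-th power, as required by \eqref{eq:chernoff-step}, produces exactly the bound \eqref{eq:lemma-35}. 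There is no serious obstacle: the argument is the classical Chernoff estimate for the binomial distribution, and the only point needing attention is verifying that the optimizer $t_\ast$ lies in the admissible region $t>0$, which is precisely where the assumption $\gamma>p$ enters; everything else is elementary calculus and algebra.
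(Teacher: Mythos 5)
Your proof is correct: the exponential Markov bound gives $\mathbb{P}(S_n\geq\gamma n)\leq\left(e^{-t\gamma}(1-p+pe^{t})\right)^{n}$ for every $t>0$, the hypothesis $\gamma>p$ is exactly what makes the optimizing point $t_\ast=\log\frac{\gamma(1-p)}{p(1-\gamma)}$ positive and hence admissible, and your algebra at $t_\ast$ reproduces the stated bound (in fact you do not even need $t_\ast$ to be a global minimizer — any admissible $t$ yields an upper bound, so plugging in $t_\ast$ suffices). The paper itself does not prove this proposition but quotes it from the cited reference, and your argument is precisely the classical Chernoff estimate one would expect there, so nothing further is needed.
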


\begin{proof}[ Conclusion of the proof of Theorem \ref{th:main-alloy-type-fractals}]
 Due to \eqref{eq:diag_est} we can  continue the estimate \eqref{eq:to} as
\begin{equation}\label{eq:trace-estimate-rewrited}
\Lambda(t) \leq  A^*  \mathbb{E}_{\mathbb{Q}}\left[{\rm e}^{-(t-1)\lambda_{1}^{N,M}(\omega)}; \mathcal{A}_{M,\delta} \right] + A^* \mathbb{Q}\left[\mathcal{A}^c_{M,\delta}\right], \quad t > 1.
\end{equation}
To estimate the integral over $\mathcal{A}_{M,\delta}$ we use Lemma \ref{lem:34-alloy-type}, which gives
 \[ \mathbb{E}_{\mathbb{Q}}\left[{\rm e}^{-(t-1)\lambda_1^{N,M}(\omega)};\mathcal A_{M,\delta}\right]
 \leq {\rm e}^{-(t-1) B_0\delta L^{ -M\alpha}}, \quad M \geq M_2\]
 and the second part can be estimated by Proposition \ref{prop:lemma-35}.
Indeed, let us note that
\begin{eqnarray*}
\mathcal{A}^c_{M,\delta} &  =  & \left\{ \omega: \#\left\{ v \in \cV_0^{\langle M\rangle} : \xi_v(\omega) > \frac{D_0}{L^{ M\alpha}}\right\} < \delta \cdot k_0^{\langle M\rangle}\right\}\\
& = & \left\{ \omega: \#\left\{ v \in \cV_0^{\langle M\rangle} : \xi_v(\omega) \leq \frac{D_0}{L^{M\alpha}}\right\} \geq (1- \delta) \cdot k_0^{\langle M\rangle}\right\}
\end{eqnarray*}
(recall that $k_0^{\langle M\rangle}$ is the cardinality of $\cV_0^{\langle M\rangle}$).
We want to use Proposition  \ref{prop:lemma-35} with  $n =  k_0^{\langle M\rangle},\; p=p_M = F_\xi(\frac{D_0}{L^{ M\alpha}})$  and $\gamma = 1-\delta.$ We  need to make sure that $\gamma > p_M \Leftrightarrow \delta < 1 - p_M.$ But since $\lim_{M \rightarrow \infty} p_M = F_\xi(0)  \in [0,1),$ this is not a problem. Moreover, as $\lim_{\delta \rightarrow 0}\frac{1}{1-\delta}\left(\frac{1}{\delta}\right)^\frac{\delta}{1-\delta} = 1$
 and $M \mapsto p_M$ is nonincreasing, we can find $M_3 \geq M_2$ and a universal number  $\delta_0>0$  such that for $M \geq M_3$ we have both  $1 - p_{M} > \delta_0$
and
\[
\left(\left(\frac{1}{\delta_0}\right)^{\frac{\delta_0}{1-\delta_0}}\frac{1}{1-\delta_0}\right)^{1-\delta_0} p_M^{(1-\delta_0)/2} \leq 1.
\]
 Therefore, from \eqref{eq:lemma-35} and the estimate (\ref{eq:number-points})  we obtain, for $M\geq M_3$:
\begin{eqnarray*}
\mathbb{Q}\left[\mathcal{A}^{c}_{M,\delta_0}\right] & \leq &    \left(\left(\frac{1-p_M}{\delta_0}\right)^{\delta_0}\left(\frac{p_M}{1-\delta_0}\right)^{1-\delta_0}\right)^{k_0^{\langle M\rangle}} \\
                                                    & \leq & \left(\left(\frac{1}{\delta_0}\right)^{\frac{\delta_0}{1-\delta_0}}\frac{\sqrt{p_M}}{1-\delta_0}\right)^{(1-\delta_0)k_0^{\langle M\rangle}} p_M^{(1-\delta_0)k_0^{\langle M\rangle}/2} \\
                                                    & \leq &  {p_M}^{(1-\delta_0) k_0^{\langle M\rangle}/2}\\
                                                    & \leq & \exp\left(-\frac{1-\delta_0}{2} \cdot L^{Md} \log \frac{1}{p_M}\right).
\end{eqnarray*} 
Let now
$$
c_1 =  B_0\delta_0, \qquad c_2 = (1-\delta_0)/2.
$$
Consequently, we obtain that there exists  $t_1 \geq t_0 \vee 1$ (recall that $t_0$ was defined in \eqref{eq:x_i_h})  such that for every $t \geq t_1$ and $M \geq M_3$ we have, with a suitable constant $ c_3>0,$
\begin{eqnarray*}
\Lambda(t) & \leq & {A^*}\left(\exp\left(-\frac{c_1(t-1)}{L^{ M\alpha}}\right) +  \exp\left(-c_2 \cdot L^{Md} \log \frac{1}{F_\xi(\frac{D_0}{L^{ M\alpha}})}\right)\right)\\
& \leq &  {A^*}\left(\exp\left(-\frac{ c_3t}{(L^{M-1})^{ \alpha}}\right) +  \exp\left(- c_2 \cdot L^{Md} \log \frac{1}{F_\xi(\frac{D_0}{L^{ M\alpha}})}\right)\right).
\end{eqnarray*}
We make $M$ depend on $t$ in such a way that $M \rightarrow \infty $ when $t \rightarrow \infty. $ Let  $x_t$ be defined by (\ref{eq:x_i_h}). Since $x_t\to\infty$ when $t\to\infty,$ there is a unique $M=M(t)$ for which
\begin{equation}\label{eq:M on t}
 L^{M-1} \leq x_t < L^M, \qquad  t \geq t_1.
\end{equation}
 Indeed, we define $M(t):=\left\lfloor \log_L x_t \right\rfloor +1$, $t \geq t_1$.
Then there is $ t_2 \geq t_1 $ such that for $  t \geq t_2$ we have $M \geq M_3.$ Thanks to (\ref{eq:t_cond}),
$$
(L^{M-1})^{\alpha} \leq x_t^{\alpha}=\left(\frac{t}{\log \frac{1}{F_\xi(D_0/x_t^{ \alpha})}}\right)^{ \frac{\alpha}{d+\alpha}}
$$
and
$$
\frac{t}{(L^{M-1})^{ \alpha}}  \geq  t^{ \frac{d}{d+\alpha}}\left( \log \frac{1}{F_\xi(D_0/x_t^{ \alpha})} \right)^{ \frac{\alpha}{d+\alpha}}, \qquad  t \geq t_2.
$$
Due to the fact that the function $ x \mapsto x^d\log \frac{1}{F_\xi(D_0/ x^{\alpha})}$ is increasing and  $x_t  < L^M$,  we also get
$$
L^{Md}\log \frac{1}{F_\xi(D_0/L^{ M\alpha})} \geq x_t^d \log \frac{1}{F_\xi(D_0/x_t^{ \alpha})}=t^{ \frac{d}{d+\alpha}}\left( \log \frac{1}{F_\xi(D_0/x_t^{ \alpha})} \right)^{ \frac{\alpha}{d+\alpha}},
$$
which implies, with $c_4=c_2\wedge c_3,$:
$$
\Lambda(t) \leq {2A^*}
\exp\left(-c_4 t^{\frac{d}{d+\alpha}}\left(\log \frac{1}{F_\xi(D_0/x_t^{ \alpha})} \right)^{ \frac{\alpha}{d+\alpha}}\right)={2A^*}
\exp\left(-c_4 t^{\frac{d}{d+\alpha}}\left(h(t) \right)^{ \frac{\alpha}{d+\alpha}}\right),\, \quad t \geq t_2.
$$
To conclude, we take the logarithm, rearrange, and pass to the limit $t\to\infty.$
Formula \eqref{eq:upper-poiss-1}  follows.  When an atom at 0 is present, then $\lim_{t\to\infty}h(t)= \log\frac{1}{F_\xi(0)}$ and \eqref{eq:upper-poiss-2} follows as well.

\end{proof}

\subsection{The lower bound for the Laplace transform of the IDS}

Now that we know the rate function in the asymptotics for the IDS, we can complement Theorem \ref{th:main-alloy-type-fractals} with a matching lower bound.

Our main tool in this section will be the Feynman--Kac semigroup $\big\{T_t^{D,\Delta, V}: t\geq 0\big\}$ of the process $X$ \emph{killed upon exiting a given complex} $\Delta \subset \cK^{\langle \infty \rangle}$ which is defined for a (non-random) potential $V \in \mathbb K_{loc}^X(\mathcal K^{\langle\infty\rangle})$:
\begin{equation} \label{eq:killed_F-k}
T_t^{D,\Delta,V} f(x) = \mathbf{E}^x \left[{\rm e}^{-\int_0^t V(X_s) {\rm d}s}f(X_t); t<\tau_{\Delta} \right], \quad f \in L^2(\Delta, \mu),  \ t>0;
\end{equation}
here $\tau_{\Delta}$ is the first exit time of the process from the complex $\Delta,$ i.e.  $\tau_{\Delta} := \inf \{t > 0: X_t \notin \Delta\}.$ Clearly, this semigroup has similar properties to $\big\{T_t^{D,M,V^\omega}: t\geq 0\big\}$ defined in Section \ref{sec:RSS}. The difference is that here we consider an arbitrary complex $\Delta$ instead of $\cK^{\langle M\rangle}$, and that the potential $V$ is deterministic. Denote: $H_{\Delta}^{D, V}  := - A^{D,\Delta,V}$, where $A^{D,\Delta,V}$ is the $L^2$-generator of the semigroup $\big\{T_t^{D,\Delta,V}: t\geq 0\big\}$, and let $\lambda_1^V(\Delta) := \inf\spec(H_{\Delta}^{D, V})$ be the corresponding ground state eigenvalue. If $V \equiv 0$, then we simply write $\lambda_1(\Delta)$. Moreover, if $X=Z$ in \eqref{eq:killed_F-k}, that is the underlying process is the Brownian motion, so we use a different symbol $\mu_1(\Delta)$ for the corresponding ground state eigenvalue.

Before we proceed we note that the bound (\ref{eq:sub_est}) in our particular case gives:
for every $t_0 > 0$ there exists $ c=c(t_0)$ such that
\begin{equation}\label{eq:ps0}
p(t,x,x) \leq c  t^\frac{-d}{\alpha},  \qquad t \geq t_0, \  x \in \cK^{\langle \infty \rangle}.
\end{equation}
We also need a version of  \cite[Lemma 4.1]{bib:KaPP3}, adapted to the present setting. We just replace the space $\mathbb{R}^d$ with $\mathcal{K}^{\langle \infty \rangle}$,  and repeat the proof verbatim. The statement goes as follows.

\begin{lemma}\label{prop:lemma-41}
Let $ 0 \leq V \in \mathbb{K}^X_{loc}\cap L^1(\mathcal{K}^{\langle \infty \rangle},\mu)$. Then for any complex $\Delta \subset \mathcal{K}^{\langle \infty \rangle}$ we have
$$
\lambda_1^V(\Delta) \leq \lambda_1(\Delta)  + {\rm e}\cdot\sup_{x\in\mathcal K^{\langle \infty\rangle}} p(s,x,x) ||V||_1, \mbox{ with } s:= \frac{1}{ \lambda_1(\Delta)}.
$$
\end{lemma}

In this section we use another representation od the IDS --  it  arises as the limit of finite-volume expressions with Dirichlet boundary conditions (cf.\ Corollary \ref{coro:conv}). More precisely, the Laplace transform $\Lambda(t)$ of the IDS can be recovered from the formula
\begin{equation}\label{eq:L_t_inty_lower}
\Lambda(t)=\lim_{M\to\infty}  \mathbb{E}_{\mathbb{Q}} \Lambda_{M}^{D,  V^\omega}(t),
\end{equation}
where
$$
 \Lambda_{M}^{D,  V^\omega}(t) = \frac{1}{L^{Md}}\int_{\mathcal K^{\langle M\rangle}}p(t,x,x)\mathbf{E}^{x,x}_t \left[ {\rm e}^{-\int_0^t V^{\omega}(X_s){\rm d}s};  t< \tau_{\mathcal K^{\langle M\rangle}}\right]\mu({\rm d}x).
$$
\begin{theorem}\label{th:main-2}
  Let $\cK^{\langle \infty \rangle}$ be an USNF with the GLP and let the assumptions \textup{\textbf{(B)}}, \textup{\textbf{(Q1)}}-\textup{\textbf{(Q2)}} and \textup{\textbf{(W1)}}-\textup{\textbf{(W4)}} hold. Let $h$ be given by (\ref{eq:x_i_h}).  Then there exists $K_1'>0$ such that
\begin{equation}\label{eq:lower-poiss-1}
\liminf_{t\to\infty} \frac{\log  \Lambda(t)}{t^{\frac{d}{ d+\alpha}} (h(t))^{\frac{\alpha}{d+\alpha}}}\geq -K_1',
\end{equation}
Again, when $F_\xi(0) > 0$, then
\begin{equation}\label{eq:lower-poiss-2}
\liminf_{t\to\infty} \frac{ \log  \Lambda(t)}{t^{\frac{d}{ d+\alpha}}}\geq -K_1' \left(\log\frac{1}{{F_\xi(0)}}\right)^{\frac{ \alpha}{d+\alpha}}.
\end{equation}
\end{theorem}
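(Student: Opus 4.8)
The plan is to follow the Euclidean scheme of \cite[Section~4.2]{bib:KaPP3}, building a favourable \emph{trial configuration} of the potential on one large complex, and reading off the rate via a Dirichlet-bracketing lower bound for $\Lambda_M^{D,V^\omega}(t)$. Starting from the representation \eqref{eq:L_t_inty_lower} and Corollary~\ref{coro:conv}, I would decompose $\mathcal K^{\langle M\rangle}$ into its $L^{(M-n)d}$ sub-complexes $\Delta$ of order $n<M$; extending $L^2(\Delta)$-functions by zero, locality of the Dirichlet form gives $\bigoplus_\Delta T_t^{D,\Delta,V^\omega}\le T_t^{D,M,V^\omega}$, hence
\[
\Lambda_M^{D,V^\omega}(t)=\frac{1}{L^{Md}}\operatorname{Tr}T_t^{D,M,V^\omega}\ \ge\ \frac{1}{L^{Md}}\sum_{\Delta}\operatorname{Tr}T_t^{D,\Delta,V^\omega}\ \ge\ \frac{1}{L^{Md}}\sum_{\Delta}{\rm e}^{-t\lambda_1^{V^\omega}(\Delta)}.
\]
Taking $\mathbb E_{\mathbb Q}$ (just linearity, no independence needed) and letting $M\to\infty$ yields, for every large $n$, the bound $\Lambda(t)\ge L^{-nd}\,\mathbb E_{\mathbb Q}\big[{\rm e}^{-t\lambda_1^{V^\omega}(\mathcal K^{\langle n\rangle})}\big]$.

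Next I would estimate $\lambda_1^{V^\omega}(\mathcal K^{\langle n\rangle})$ on a \emph{good event}. Fix a large constant $\kappa>0$ and let $\mathcal B_n:=\{\omega:\ \xi_v(\omega)\le \kappa L^{-n\alpha}\ \text{for every }v\in\cV_0^{\langle\infty\rangle}\text{ with }W(\cdot,v)\not\equiv0\text{ on }\mathcal K^{\langle n\rangle}\}$. By the finite-range part of \textbf{(W4)}, for $n$ large the number of such $v$ is at most $C_1L^{nd}$ with $C_1$ absolute, so $\mathbb Q(\mathcal B_n)\ge F_\xi(\kappa L^{-n\alpha})^{C_1L^{nd}}$; and by the uniform-integrability part of \textbf{(W4)}, on $\mathcal B_n$ we get $\|V^\omega\mathbf 1_{\mathcal K^{\langle n\rangle}}\|_1\le \kappa L^{-n\alpha}\cdot C_1L^{nd}\cdot\sup_v\int W(x,v)\mu({\rm d}x)\lesssim \kappa L^{n(d-\alpha)}$. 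The scaling property and \textbf{(B)} give $\lambda_1(\mathcal K^{\langle n\rangle})\le\phi(\mu_1(\mathcal K^{\langle n\rangle}))=\phi(L^{-nd_w}\mu_1(\mathcal K^{\langle 0\rangle}))\le \bar c\,L^{-n\alpha}$ for $n$ large, so with $s:=1/\lambda_1(\mathcal K^{\langle n\rangle})\to\infty$ the bound \eqref{eq:ps0} gives $\sup_xp(s,x,x)\le c\lambda_1(\mathcal K^{\langle n\rangle})^{d/\alpha}\lesssim L^{-nd}$. Plugging these into Lemma~\ref{prop:lemma-41} (applied to $V=V^\omega\mathbf 1_{\mathcal K^{\langle n\rangle}}$, which is a.s.\ in $\mathbb K^X_{loc}\cap L^1$ by the finite range) gives $\lambda_1^{V^\omega}(\mathcal K^{\langle n\rangle})\le C_\kappa L^{-n\alpha}$ on $\mathcal B_n$, with $C_\kappa$ growing only linearly in $\kappa$. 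Therefore
\[
\log\Lambda(t)\ \ge\ -nd\log L\ -\ C_\kappa\,tL^{-n\alpha}\ -\ C_1L^{nd}\,g\!\big((D_0/\kappa)L^{n\alpha}\big),
\]
with $g$, $D_0$ as in \eqref{eq:g_i_j}, \eqref{eq:d_0_alloytype}.

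Finally I would optimise over $n=n(t)$. Choosing $n(t):=\lceil\log_Lx_t\rceil$, so that $x_t\le L^n<Lx_t$ for $x_t$ from \eqref{eq:x_i_h}, and using $t=x_t^{d+\alpha}h(t)$, i.e.\ $tx_t^{-\alpha}=x_t^dh(t)=t^{d/(d+\alpha)}h(t)^{\alpha/(d+\alpha)}$ (from \eqref{eq:t_cond}): the first term is $O(\log x_t)=o(t^{d/(d+\alpha)}h(t)^{\alpha/(d+\alpha)})$ since $h(t)$ stays bounded below by a positive constant; the second is $\le C_\kappa t x_t^{-\alpha}=C_\kappa t^{d/(d+\alpha)}h(t)^{\alpha/(d+\alpha)}$; and, fixing $\kappa\ge D_0L^\alpha$ so that $(D_0/\kappa)L^{n\alpha}<(D_0L^\alpha/\kappa)x_t^\alpha\le x_t^\alpha$, monotonicity of $g$ makes the third term $\le C_1L^d\,x_t^dg(x_t^\alpha)=C_1L^d\,t^{d/(d+\alpha)}h(t)^{\alpha/(d+\alpha)}$. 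Dividing by $t^{d/(d+\alpha)}h(t)^{\alpha/(d+\alpha)}$ and letting $t\to\infty$ yields \eqref{eq:lower-poiss-1} with $K_1':=C_\kappa+C_1L^d$; when $F_\xi(0)>0$ one has $h(t)\to\log(1/F_\xi(0))$, and \eqref{eq:lower-poiss-2} follows. The main obstacle is getting the rate function \emph{exactly} right: this forces the use of the $L^1$-type bound of Lemma~\ref{prop:lemma-41} (a pointwise $L^\infty$ bound on $V^\omega$ is not available, as $W(\cdot,v)$ is only in the Kato class) together with a careful coupling of three scales — the order $n$ of the trial complex, the truncation level $\kappa L^{-n\alpha}$ of the $\xi_v$'s, and $x_t$ — so that the ``time cost'' $tL^{-n\alpha}$ and the ``probability cost'' $L^{nd}g((D_0/\kappa)L^{n\alpha})$ are both comparable to $t^{d/(d+\alpha)}h(t)^{\alpha/(d+\alpha)}$; the freedom to enlarge $\kappa$ (which only degrades constants) is precisely what forces the argument of $g$ to be $x_t^\alpha$, i.e.\ $h(t)$, rather than a dilate of it. A secondary, purely geometric point is the Dirichlet bracketing over sub-complexes and the bookkeeping of boundary vertices on the fractal, which is where \textbf{(W4)} enters.
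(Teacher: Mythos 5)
Your strategy is the same as the paper's: Dirichlet bracketing onto one sub-complex of the optimal scale, truncation of the $\xi_v$'s on a favourable event, Lemma~\ref{prop:lemma-41} to bound $\lambda_1^{V}(\Delta)$, and the same calibration $n=n(t)\sim\log_L x_t$. Two points in the write-up, however, are not correctly justified.

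First, the bracketing step. You invoke \emph{locality of the Dirichlet form} to assert $\bigoplus_\Delta T_t^{D,\Delta,V^\omega}\le T_t^{D,M,V^\omega}$. But the generator here is $\phi(-\mathcal L)$, which is \emph{non-local} for every $\phi$ other than the identity (this is precisely what makes the relativistic case interesting), and the locality-based splitting $\mathcal E(f,f)=\sum_\Delta\mathcal E(f_\Delta,f_\Delta)$ fails: the quadratic form picks up cross terms of indefinite sign between pieces supported in distinct sub-complexes, so there is no form/operator inequality available by this route. What is true, and what the paper uses, is a \emph{pathwise} domain-monotonicity statement: since $\Delta^{(i)}\subset\cK^{\langle M+n\rangle}$ one has $\{\tau_{\Delta^{(i)}}>t\}\subset\{\tau_{\cK^{\langle M+n\rangle}}>t\}$, hence $u^D_{\cK^{\langle M+n\rangle},\omega}(t,x,x)\ge u^D_{\Delta^{(i)},\omega}(t,x,x)$ on $\Delta^{(i)}$, and integrating the diagonal gives the trace inequality. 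This holds for any Markov process regardless of locality. Your conclusion is right but the argument you give for it is not.

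Second, the passage from $\frac{1}{L^{Md}}\sum_\Delta\mathbb E_{\mathbb Q}\big[{\rm e}^{-t\lambda_1^{V^\omega}(\Delta)}\big]$ to $L^{-nd}\,\mathbb E_{\mathbb Q}\big[{\rm e}^{-t\lambda_1^{V^\omega}(\cK^{\langle n\rangle})}\big]$ tacitly uses that every $n$-sub-complex $\Delta$ contributes the \emph{same} expectation as $\cK^{\langle n\rangle}$. This is an invariance statement about the joint law of $(X,\,V^\omega)$ under the local isometries identifying different $n$-complexes, and it requires $W$ to respect those isometries --- which is not among the standing assumptions. The paper avoids the issue by replacing the average by $\inf_i\mathbb E_{\mathbb Q}\big[\Lambda^{D,V^\omega}_{\Delta^{(i)}}(t)\big]$ and then proving a lower bound that is uniform in $i$ (your own later estimates are in fact uniform in $\Delta$: the killed ground state eigenvalue $\lambda_1(\Delta)$ depends only on the shape, and $\|V^\omega\mathbf 1_\Delta\|_1$ is controlled by the $v$-uniform integral in \textbf{(W4)}). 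So the fix is immediate, but as written the step is unjustified.

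With these two points repaired --- replace the locality argument by the pathwise domain-monotonicity of the killed kernel, and replace the claimed equality by an infimum over sub-complexes together with the uniformity of your bounds --- your proof coincides with the paper's.
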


\begin{proof} Fix $M\in\mathbb Z_+,$ $M\geq M_0,$  where the integer $M_0$ comes from \textbf{(W4)}. 
Given (\ref{eq:L_t_inty_lower}) we have also
$$
\Lambda(t) = \lim_{n \rightarrow \infty} \mathbb{E}_{\mathbb{Q}} \Lambda_{M+n}^{D, V^\omega}(t).
$$
For fixed $n \geq 1,$ the set $\mathcal{K}^{\langle M + n \rangle}$  consists of $L^{nd}=N^{n}$ $M$-complexes   meeting only through their vertices. Denote them $ {\Delta}^{(1)},\ldots,{\Delta}^{(N^n)}$. As $ {\Delta}^{(i)} \subset \mathcal{K}^{\langle M + n \rangle}$ we get:
\begin{eqnarray*}
 \mathbb{E}_{\mathbb{Q}} \Lambda_{M+n}^{D,  V^\omega}(t)  & = & \frac{1}{L^{(M + n)d}}\int_{\mathcal K^{\langle M + n\rangle}}p(t,x,x) \mathbb{E}_{\mathbb{Q}} \mathbf{E}^{x,x}_t \left[ {\rm e}^{-\int_0^t V^{\omega}(X_s){\rm d}s}\mathbf{1}_{\{\tau_{\mathcal K^{\langle M + n\rangle}} > t\}}\right]\mu({\rm d}x)\\
& = & \frac{1}{L^{(M + n)d}}\sum_{i=1}^{N^n}\int_{ {\Delta}^{(i)}}p(t,x,x) \mathbb{E}_{\mathbb{Q}} \mathbf{E}^{x,x}_t \left[ {\rm e}^{-\int_0^t V^{\omega}(X_s){\rm d}s}\mathbf{1}_{\{\tau_{\mathcal K^{\langle M + n\rangle}} > t\}}\right]\mu({\rm d}x)\\
& \geq & \frac{1}{L^{(M + n)d}}\sum_{i=1}^{N^n}\int_{ {\Delta}^{(i)}}p(t,x,x) \mathbb{E}_{\mathbb{Q}} \mathbf{E}^{x,x}_t \left[ {\rm e}^{-\int_0^t V^{\omega}(X_s){\rm d}s}\mathbf{1}_{\{\tau_{ {\Delta}^{(i)}} > t\}}\right]\mu({\rm d}x)\\
& \geq & \inf_{i} \mathbb{E}_{\mathbb{Q}} [ \Lambda_{ {\Delta}^{(i)}}^{D,V^\omega}(t)],
\end{eqnarray*}
where by $\Lambda_{ {\Delta}^{(i)}}^{D,V^\omega}(t)$ we have denoted the expression
\[\frac{1}{\mu( {\Delta}^{(i)})}\int_{ {\Delta}^{(i)}}p(t,x,x) \mathbb{E}_{\mathbb{Q}} \mathbf{E}^{x,x}_t \left[ {\rm e}^{-\int_0^t V^{\omega}(X_s){\rm d}s}\mathbf{1}_{\{\tau_{ {\Delta}^{(i)}} > t\}}\right]\mu({\rm d}x).\]
Let $\kappa>0$ be fixed, to be chosen later on. Fix some $i_0\in\{1,..., N^n\}$ and let
$$
\mathcal{M}_{i_0}=\{\omega: \forall_{v \in \cV_0({ \mathcal O}_{i_0}^{M_0})} \; \xi_v (\omega)\leq \kappa \},
$$
where ${ \mathcal O}_{i_0}^{M_0}
 $denotes the $1-$vicinity of $ {\Delta}^{(i_0)}$ in the metric $d_{M_0}$ (i.e.\ those points $x$ that belong to  $M_0-$complexes  with at least one vertex in $ {\Delta}^{(i_0)}$), and for a set $A\subset\mathcal K^{\langle\infty\rangle},$ write $\cV_0(A)=  \cV_0^{\langle\infty\rangle}\cap A.$ In particular,
\begin{equation}\label{eq:calc_m_i-0}
 \mathbb{E}_{\mathbb{Q}} [ \Lambda_{ {\Delta}^{(i_0)}}^{D,V^\omega}(t)]\geq \mathbb{E}_{\mathbb{Q}} [ \Lambda_{ {\Delta}^{(i_0)}}^{D,V^\omega}(t) \mathbf{1}_{\mathcal{M}_{i_0}}].
\end{equation}
Fix a trajectory $ X_t$ originating at $x \in  {\Delta}^{(i_0)}$ and not leaving the set $ {\Delta}^{(i_0)}$ up to time $t$. Because of the assumption { \bf (W4)}, for $\omega \in \mathcal{M}_{i_0}$  we have that
$$
V^\omega(X_s)=\sum_{v \in \cV_0({ \mathcal O}_{i_0}^{M_0})}\xi_v(\omega)W(X_s,v) \leq \kappa \sum_{v \in \cV_0({\mathcal O}_{i_0}^{M_0})}W(X_s,v), \qquad  s \leq t.
$$
 Denote:
$$
V_\kappa(x) = \kappa \sum_{v \in \cV_0({\mathcal O}_{i_0}^{M_0})}W(x,v), \quad x \in \cK^{\langle\infty\rangle}.
$$
It follows that for such a trajectory
$$
{\rm e}^{-\int_0^t V^\omega(X_s){\rm d}s} \geq {\rm e}^{-  \int_0^t V_{\kappa}(X_s){\rm d}s }
$$
therefore on the set $\{\tau_{ {\Delta}^{(i_0)}} > t\}$ it holds:
\begin{eqnarray}\label{eq:tal}
\mathbb{E_Q}\left[{\rm e}^{-\int_0^t V(X_s, \omega){\rm d}s}\mathbf{1}_{\mathcal{M}_{i_0}} \right] &\geq& \mathbb{E_Q}\left[{\rm e}^{-\int_0^t V_{\kappa}(X_s){\rm d}s} \cdot\mathbf{1}_{\mathcal{M}_{i_0}} \right]
= {\rm e}^{- \int_0^t V_{\kappa}(X_s){\rm d}s} \mathbb{Q}(\mathcal{M}_{i_0}).
\end{eqnarray}
Consequently, from  (\ref{eq:tal}) and   (\ref{eq:calc_m_i-0}):
\begin{eqnarray}\label{eq:l2}
 \mathbb{E}_{\mathbb{Q}}  \Lambda_{ {\Delta}^{(i_0)}}^{D,V^\omega}(t)] \geq
\frac{\mathbb{Q}(\mathcal{M}_{i_0})}{\mu( {\Delta}^{(i_0)})}\int_{ {\Delta}^{(i_0)}}p(t,x,x) \mathbf{E}^{x,x}_t \left[ {\rm e}^{-\int_0^t V_{\kappa}(X_s){\rm d}s}\mathbf{1}_{\{\tau_{ {\Delta}^{(i_0)}} > t\}}\right]\mu({\rm d}x).
\end{eqnarray}
As the number of points in $ \cV_0({\mathcal O}_{i_0}^{M_0})$ does not exceed
$ \widetilde C_0L^{Md}$ with some $\widetilde C_0>0$ (cf.\ (\ref{eq:number-points})),  we have
\begin{eqnarray*}
\mathbb{Q}[\mathcal{M}_{i_0}] = \mathbb{Q}[\forall_{v \in \cV_0({\mathcal O}_{i_0}^{M_0})}\; \xi_v \leq \kappa]\geq F_\xi(\kappa)^{\widetilde C_0L^{Md}}
= {\rm e}^{-\widetilde C_0L^{Md} \log \frac{1}{F_\xi(\kappa)}}
\end{eqnarray*}
and
we obtain
\begin{align}\label{eq:final_eq}
 \mathbb{E}_{\mathbb{Q}} [ \Lambda_{ {\Delta}^{(i_0)}}^{D,V^\omega}(t)] \geq
\frac{1}{\mu( {\Delta}^{(i_0)})}\int_{ {\Delta}^{(i_0)}}p(t,x,x) \mathbf{E}^{x,x}_t \left[ {\rm e}^{-\int_0^t V_{\kappa}(X_s){\rm d}s}\mathbf{1}_{\{\tau_{ {\Delta}^{(i_0)}} > t\}}\right]\mu({\rm d}x) \cdot
{\rm e}^{- \widetilde C_0L^{Md} \log \frac{1}{F_\xi(\kappa)}}.
\end{align}
Denote
$$
I_{i_0} := \int_{ {\Delta}^{(i_0)}}p(t,x,x) \mathbf{E}^{x,x}_t \left[ {\rm e}^{-\int_0^t  V_{\kappa}(X_s){\rm d}s}\mathbf{1}_{\{\tau_{ {\Delta}^{(i_0)}} > t\}}\right]\mu({\rm d}x).
$$
This integral is the trace of the operator $T_t^{D,\Delta^{(i_0)},V_\kappa}$, see \eqref{eq:killed_F-k}.
Consequently, it is not smaller than the principal eigenvalue ${\rm e}^{- \lambda_1^{V_\kappa}(\Delta^{(i_0)})t}$ of that operator. Using Lemma (\ref{prop:lemma-41}),  we get that for $s = \frac{1}{\lambda_1( {\Delta}^{(i_0)})}$ it holds
$$
I_{i_0} \geq {\rm e}^{-t\lambda_1^{V_\kappa}( {\Delta}^{(i_0)})} \geq {\rm e}^{-t(\lambda_1({\Delta}^{(i_0)}) + e \sup_{x\in\mathcal K^{\langle\infty\rangle}}p(s,x,x)||V_\kappa||_1)}.
$$
Now using { \bf (W4)} we have that
$$
||V_\kappa||_1 = \kappa \int_{\mathcal{K}^{\langle \infty \rangle}}\sum_{v \in \cV_0({ \mathcal O}_{i_0}^{M_0})}W(x,v)\mu({\rm d}x)
               = \kappa \sum_{v \in \cV_0({ \mathcal O}_{i_0}^{M_0})} \int_{\mathcal{C}_{M_0}(v)}W(x,v)\mu({\rm d}x)  \leq   c_1 L^{Md} \cdot \kappa.
$$
Moreover,  from (\ref{eq:ps0}), with $s = 1/\lambda_1( {\Delta}^{(i_0)})$ we get
$$
p(s,x,x) \leq c_2\lambda_1( {\Delta}^{(i_0)})^{ \frac{d}{\alpha}},
$$
with certain $c_2>0$ not depending on $M.$  Furthermore, by \cite[Theorem 3.4]{bib:Chen-Song},
\[
 \lambda_1({\Delta}^{(i_0)}) \leq \phi(\mu_1({\Delta}^{(i_0)})),
\]
where $\mu_1({\Delta}^{(i_0)})$ is the ground state eigenvalue corresponding to the killed Brownian motion in ${\Delta}^{(i_0)}$. 
Due to translation invariance of the killed Brownian motion,
 scaling and \eqref{eq:phi_at_zero}, there exists $M_1 \geq M_0$ such that for $M \geq M_1$
 \begin{eqnarray*}
\phi(\mu_1( {\Delta}^{(i_0)})) = \phi(\mu_1(\mathcal K^{\langle M\rangle}))=\phi(L^{-Md_w}\mu_1(\mathcal K^{\langle 0\rangle}))
 \leq  C_2 L^{-M \alpha}\phi(\mu_1(\mathcal K^{\langle 0\rangle}))
 = c_3 L^{ -M\alpha}.
 \end{eqnarray*}
with a constant $c_3>0$ not depending on $M.$
Consequently,
$$
I_{i_0} \geq \exp\left(-t(c_3L^{-M\alpha} + c_4\kappa )\right).
$$
Choosing $\kappa = \frac{D_0}{L^{ M \alpha}}$ (with  $D_0$ coming from \eqref{eq:d_0_alloytype}) and returning to (\ref{eq:final_eq}), we see that it holds
\begin{eqnarray*}
 \mathbb{E}_{\mathbb{Q}} \left[ \Lambda_{ {\Delta}^{(i_0)}}^{D,V^\omega}(t)\right]  & \geq & \frac{1}{L^{Md}}\exp\left(- c_5  \frac{t}{ L^{M\alpha}}-  \widetilde C_0  L^{Md}\log \frac{1}{F_\xi(\frac{D_0}{L^{M \alpha}})}\right) \\
& \geq & \frac{1}{L^{Md}}\exp\left(- c_6  \left(\frac{t}{L^{(M+1)\alpha}}+L^{Md}\left(\frac{L^M}{L^{M+1}}\right)^{\alpha}\log \frac{1}{F_\xi(\frac{D_0}{L^{M\alpha}})}\right)\right)\\
& \geq & \frac{1}{L^{Md}}\exp\left(-\frac{ c_6}{L^{(M+1)\alpha}}\left(t+L^{ M(d+\alpha)}\log \frac{1}{F_\xi(\frac{D_0}{L^{M\alpha}})}\right)\right).
\end{eqnarray*}

Similarly to the proof of the upper bound, let us use (\ref{eq:x_i_h}) and take the unique $M = M(t)$ for which
\begin{equation}\label{eq:M on t-lower}
L^M \leq x_t < L^{M+1}, \qquad t \geq t_0.
\end{equation}
 As $M \rightarrow \infty $ when $t \rightarrow \infty,$ there is $t_1 \geq t_0 $ such that for $ t \geq t_1$ we have $M \geq M_1.$  Given (\ref{eq:x_i_h}) and  \eqref{eq:t_cond},
$$
L^{ M(d+\alpha)}\log \frac{1}{F_\xi(\frac{D_0}{ L^{M\alpha}})} \leq t.
$$
It follows
$$
 \mathbb{E}_{\mathbb{Q}} \left[ \Lambda_{ {\Delta}^{(i_0)}}^{D,V^\omega}(t)\right]  \geq \frac{1}{L^{Md}}\exp\left(-\frac{ 2c_6t}{L^{(M+1)\alpha}}\right).
$$
It is essential that the constant $ c_6$ does not depend on $i_0.$
Thanks to (\ref{eq:t_cond}) it holds:
$$
\frac{t}{(L^{M+1})^{\alpha}} \leq \frac{t}{x_t^{ \alpha}}=t^{ \frac{d}{d+\alpha}} \left(\log \frac{1}{F_\xi(D_0/x_t^{ \alpha})}\right)^{\frac{\alpha}{d+\alpha}}=t^{ \frac{d}{d+\alpha}}\left(h(t)\right)^{ \frac{\alpha}{d+\alpha}}
$$
and
we obtain, for sufficiently large $t$ (recall $M=M(t)$ goes to $\infty$ with $t$)
\begin{eqnarray*}
\log[\Lambda(t)]
&\geq & -\log L^{Md}- 2c_6  t^{ \frac{d}{d+\alpha}} (h(t))^{ \frac{\alpha}{d+\alpha}}.
\end{eqnarray*}
We now divide both sides by $t^{ \frac{d}{d+\alpha}}\left(h(t)\right)^{ \frac{\alpha}{d+\alpha}}$
and pass to the limit $t\to\infty.$
To get rid of the unwanted term, we need to verify that
\[\lim_{t\to\infty} \frac{\log L^{Md}}{t^{\frac{d}{d+\alpha}}\left(h(t)\right)^{\frac{\alpha}{d+\alpha}}} =0.\]
Indeed, it is so: we have $L^{Md}\sim x_t^d,$ and
$t=x_t^{ d+\alpha}h(t).$ Therefore $(d+\alpha)\log x_t = \log t - \log h(t),$ meaning that $\log L^{Md}$ behaves, up to a constant,
as $\log t -\log h(t).$ As $h(t)$ has a positive (possibly infinite) limit when $t\to\infty,$ the desired statement, and consequently the Theorem, follow.
\end{proof}

\subsection{The bounds for the IDS}
To transform the estimates (\ref{eq:upper-poiss-1}) and (\ref{eq:lower-poiss-1}) into respective statements for the IDS itself from the statement of Theorem \ref{th:Lifshitz-IDS}, we use Tauberian theorems of mixed type proven in \cite[Theorem 5.1]{bib:KaPP3} an we conclude with (\ref{eq:main-statement}), in the same manner as it was done in \cite{bib:KaPP3}. \hfill$\Box$

\appendix

\section{Technical lemmas on subordinate processes}

We need the following technical results.

\begin{lemma}
\label{lem:supseries}
Let $\cK^{\langle \infty \rangle}$ be an USNF with the GLP and let the assumption \textup{\textbf{(B)}} hold. For every $t>0$ and $a>1$ we have
\begin{equation*}
\sum_{M=1}^{\infty} \sup_{x \in \cK^{\langle \infty\rangle}} \mathbf{P}^x \left[\sup_{s\leq t} |X_s - x| > a^M \right] <\infty.
\end{equation*}
\end{lemma}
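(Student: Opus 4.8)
The plan is to exploit the subordination structure $X_t=Z_{S_t}$, combining the sub-Gaussian heat kernel bounds \eqref{eq:kum} for the free Brownian motion $Z$ with the tail estimate \eqref{eq:sub} for the laws $\eta_t$. First I would reduce the statement about $X$ to one about $Z$: since $S$ is nondecreasing and independent of $Z$, for each fixed path of $S$ one has $\sup_{s\le t}|X_s-x|=\sup_{s\le t}|Z_{S_s}-x|\le\sup_{u\le S_t}|Z_u-x|$, so conditioning on $S_t$ yields
\[
\sup_{x\in\cK^{\langle\infty\rangle}}\mathbf{P}^x\Big[\sup_{s\le t}|X_s-x|>a^M\Big]\ \le\ \int_0^\infty\Psi_M(r)\,\eta_t({\rm d}r),\qquad \Psi_M(r):=\sup_{y\in\cK^{\langle\infty\rangle}}\mathbf{P}^y\Big[\sup_{u\le r}|Z_u-y|>a^M\Big],
\]
and I note that $r\mapsto\Psi_M(r)$ is nondecreasing.

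The second step is a uniform maximal displacement estimate for $Z$. Writing $\sigma:=\inf\{u:|Z_u-y|\ge a^M\}$ and applying the strong Markov property of the continuous Feller process $Z$ at $\sigma$ (distinguishing whether, after time $\sigma$, the process moves a further $a^M/2$ before time $r$), one gets $\Psi_M(r)\le 2\sup_{v\le r}\sup_{z}\mathbf{P}^z[|Z_v-z|\ge a^M/2]$. The one-time tail is then controlled by integrating the upper bound in \eqref{eq:kum} over dyadic annuli $\{2^k a^M\le|w-z|<2^{k+1}a^M\}$ and using the $d$-Ahlfors regularity $\mu(B(z,\rho))\asymp\rho^{d}$ of $\mu$; since the resulting bound is nondecreasing in $v$, the supremum over $v\le r$ is attained at $v=r$. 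Altogether there are constants $c_1\ge 1$, $c_2>0$, depending only on the fractal, with
\[
\Psi_M(r)\ \le\ c_1\exp\!\Big(-c_2\big(a^{M d_w}/r\big)^{1/(d_J-1)}\Big),\qquad r>0
\]
(and trivially $\Psi_M(r)\le 1$). This exit-time estimate is standard for diffusions enjoying sub-Gaussian heat kernel bounds and could alternatively be quoted from \cite{bib:Kum}.

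Finally I would split the integral and sum. Put $R_M:=a^{M d_w/2}$; then $a^{M d_w}/R_M=a^{M d_w/2}\to\infty$ and $R_M=(a^{d_w/2})^M$ grows geometrically. Using monotonicity of $\Psi_M$,
\[
\int_0^\infty\Psi_M(r)\,\eta_t({\rm d}r)\ \le\ \Psi_M(R_M)\,\eta_t\big((0,R_M]\big)+\eta_t\big((R_M,\infty)\big)\ \le\ c_1\exp\!\Big(-c_2\,a^{M d_w/(2(d_J-1))}\Big)+\eta_t\big((a^{d_w/2})^M,\infty\big).
\]
Summing over $M$: the first term is summable because $a^{d_w/(2(d_J-1))}>1$, so $a^{M d_w/(2(d_J-1))}\to\infty$ super-geometrically; the second term is summable by \eqref{eq:sub} applied with the constant $a^{d_w/2}>1$. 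Combining this with the reduction of the first step completes the proof. The main obstacle is the second step, i.e.\ upgrading the two-sided bounds \eqref{eq:kum} to a maximal/exit-time tail bound that is uniform in the starting point and carries the correct Euclidean exponent $1/(d_J-1)$; this is where the fractal geometry (volume regularity together with the chemical exponent) enters, although the argument itself is by now routine.
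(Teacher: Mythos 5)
Your proof is correct and follows essentially the same route as the paper: the paper likewise conditions on the subordinator, invokes a sub-Gaussian exit/maximal-displacement estimate for the Brownian motion $Z$ (cited there from Fitzsimmons--Hambly--Kumagai rather than re-derived from \eqref{eq:kum}), and closes by combining the resulting super-geometric decay with the summability condition \eqref{eq:sub}. The only cosmetic difference is that you sketch the derivation of the maximal inequality from the heat kernel bound while the paper quotes it as a black box.
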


\begin{proof}
The proof follows the lines of that of \cite[Lemma 2.3]{bib:KaPP2}. The only difference is that now we use the Euclidean distance instead of the geodesic one, and apply the sub-Gaussian estimates from \cite[Lemma 5.6, Remark 3.7]{bib:FHK}. At the end, we also use the summation property \eqref{eq:sub} as before.
\end{proof}

Let us recall that $\Delta_{M,i}$ for $1 \leq i \leq N$ denote the $M$-complexes in $\cK^{\langle M+1 \rangle}$, see Definition \ref{def:Mcomplex} (9).

\begin{lemma}
\label{lem:ceemteandtrace} Let $\cK^{\langle \infty \rangle}$ be an USNF with the GLP and let the assumption \textup{\textbf{(B)}} hold. We have the following.
\begin{itemize}
\item[(a)] For
\begin{equation}
\label{eq:ceemte}
C(M,t):= \sup_{x,y \in \cK^{\langle M \rangle}} \sum_{\substack{y^{\prime} \in \pi_{M}^{-1}(y) \\ y' \notin \cK^{\left\langle M+1 \right\rangle}}} p(t,x,y'), \quad t>0, \quad M \in \mathbb{Z}
\end{equation}
we have
\begin{equation*}
\sum_{M=1}^{\infty} C(M,t) < \infty, \quad t>0;
\end{equation*}
in particular, for every $t>0$, $C(M,t) \to 0$ as $M \to \infty$.

\item[(b)]For every $t>0$,
\begin{equation}
\label{eq:traceest}
\sum_{M=1}^{\infty} \frac{1}{\mu \left(\cK^{\langle M \rangle}\right)} \int_{\cK^{\langle M \rangle}} \left| p(t,x,x) - p_M(t,x,x) \right| \mu({\rm d}x) < \infty;
\end{equation}
in particular
\begin{equation}
\label{eq:tracelimit}
\frac{1}{\mu \left(\cK^{\langle M \rangle}\right)} \int_{\cK^{\langle M \rangle}} \left| p(t,x,x) - p_M(t,x,x) \right| \mu({\rm d}x) \to 0, \quad as \ M \to \infty.
\end{equation}
\end{itemize}
\end{lemma}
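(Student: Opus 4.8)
The plan is to prove both parts essentially in parallel, with part (a) doing most of the heavy lifting and part (b) following as a consequence. The central quantity is the tail sum of the free subordinate heat kernel $p(t,x,y')$ over the preimages $y' \in \pi_M^{-1}(y)$ that lie outside $\cK^{\langle M+1\rangle}$; these are precisely the points that $\pi_M$ folds back into $\cK^{\langle M\rangle}$ but which sit at graph-distance $d_M$ at least roughly $1$ away from $\cK^{\langle M\rangle}$. The key geometric fact to exploit is that if $y \in \cK^{\langle M\rangle}$ and $y' \in \pi_M^{-1}(y)$ with $y' \notin \cK^{\langle M+1\rangle}$, then $|x-y'|$ is comparable (from below) to $L^{M}$ times the $d_M$-distance from $x$ to $y'$, and in particular $|x-y'| \gtrsim L^{M}$ for all $x \in \cK^{\langle M\rangle}$. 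More precisely, I would organize the preimages by "shells": for $j \geq 1$, the points $y'$ with $d_{M}(\pi_M(x), \cdot) $ of order $2^j$ away contribute a term controlled by the subordinated sub-Gaussian bound at Euclidean scale $\sim 2^j L^M$.

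First I would establish the pointwise estimate on $C(M,t)$. Writing $p(t,x,y') = \int_0^\infty g(u,x,y')\,\eta_t({\rm d}u)$ and using Tonelli, it suffices to bound $\sum_{y'} g(u,x,y')$ over the relevant preimages and then integrate against $\eta_t$. For the diffusion kernel $g$, the sub-Gaussian upper bound in \eqref{eq:kum} together with a counting estimate for the number of preimages in each Euclidean shell (this is exactly the kind of estimate proved in \cite[Lemma 3.4]{bib:MO} and used in the proof of Proposition \ref{prop:kato}) gives, after summation over shells, a bound of the form
\[
\sum_{\substack{y' \in \pi_M^{-1}(y)\\ y' \notin \cK^{\langle M+1\rangle}}} g(u,x,y') \leq c\, u^{-d_s/2} \exp\!\left(-c'\left(\frac{L^{M d_w}}{u}\right)^{\frac{1}{d_J-1}}\right) + (\text{geometric tail}),
\]
valid uniformly in $x,y \in \cK^{\langle M\rangle}$; in fact the cleanest route is to simply invoke the estimate $\sum_{y'\in\pi_M^{-1}(y)\setminus\cK^{\langle M+1\rangle}} g(u,x,y') \leq c L^{-dM}$ from \cite[Lemma 3.4]{bib:MO} for the large-$u$ regime and the sub-Gaussian decay for small $u$. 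Integrating against $\eta_t$ and splitting the $u$-integral at $u = L^{M d_w}$: on $u \geq L^{Md_w}$ the $L^{-dM}$ bound gives $c L^{-dM}$; on $u < L^{M d_w}$ the Gaussian factor is summable in $M$ after using \eqref{eq:sub_est} (or directly \eqref{eq:sub}, exactly as in the proof of Lemma \ref{lem:supseries}). This yields $\sum_M C(M,t) < \infty$.

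For part (b), I would use the defining formula \eqref{eq:subrefldens} for $p_M(t,x,x)$: for $x \in \cK^{\langle M\rangle}\setminus\cV_M^{\langle M\rangle}$ one has $p_M(t,x,x) = p(t,x,x) + \sum_{x'\in\pi_M^{-1}(x),\,x'\neq x} p(t,x,x')$, and the second sum splits into the part inside $\cK^{\langle M+1\rangle}$ (finitely many terms, each decaying like the off-diagonal Gaussian at scale $\gtrsim L^M$, hence summable in $M$ by the same argument) and the part outside $\cK^{\langle M+1\rangle}$, which is exactly bounded by $C(M,t)$. On the vertex set $\cV_M^{\langle M\rangle}$ there is the extra $\mathrm{rank}(x')$ factor, but that set has $\mu$-measure zero so it does not affect the integral. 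Hence $\frac{1}{\mu(\cK^{\langle M\rangle})}\int_{\cK^{\langle M\rangle}} |p(t,x,x) - p_M(t,x,x)|\,\mu({\rm d}x) \leq C(M,t) + (\text{summable remainder})$, and summing over $M$ gives \eqref{eq:traceest}; \eqref{eq:tracelimit} is immediate.

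The main obstacle I anticipate is making the "counting of preimages in Euclidean shells" precise and uniform in $x \in \cK^{\langle M\rangle}$ — i.e. controlling both how many points of $\pi_M^{-1}(y)$ lie at a given distance and why that distance is bounded below by $\sim L^M$ when $y' \notin \cK^{\langle M+1\rangle}$. This is where the GLP and the structure of the folding projection $\pi_M$ (Section \ref{sec:usnf}, and \cite[Theorem 4.3]{bib:KOP}) are genuinely needed, and where I would lean on the already-established estimates of \cite[Lemmas 3.4, Corollary 3.1]{bib:MO} rather than redo the geometry from scratch. Everything else — the split of the $\eta_t$-integral, the use of \eqref{eq:sub_est}/\eqref{eq:sub}, and the null-set argument for $\cV_M^{\langle M\rangle}$ — is routine and parallels the proofs of Proposition \ref{prop:kato} and Lemma \ref{lem:supseries}.
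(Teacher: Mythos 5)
Your outline of part (a) is correct and runs parallel to the paper's: subordination plus Tonelli, the geometric bound on $\sum_{y'\in\pi_M^{-1}(y)\setminus\cK^{\langle M+1\rangle}} g(u,x,y')$ from \cite{bib:MO}, and a split of the $u$-integral at $u=L^{Md_w}$, with the sub-Gaussian factor controlling the small-$u$ regime and the summability property \eqref{eq:sub} handling the large-$u$ tail.

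Part (b), however, contains a genuine gap. You assert that the preimages $x'\in\pi_M^{-1}(x)\cap\cK^{\langle M+1\rangle}$ with $x'\neq x$ contribute terms ``each decaying like the off-diagonal Gaussian at scale $\gtrsim L^M$.'' No such uniform Euclidean separation exists: the folding projection $\pi_M$ fixes every vertex $v\in\cV_M^{\langle M\rangle}$, so for $x\to v$ the preimage $x'$ of $x$ in an $M$-complex adjacent to $\cK^{\langle M\rangle}$ at $v$ also tends to $v$, and $|x-x'|\to 0$. Near those vertices $p(t,x,x')$ is merely bounded by a constant (Lemma \ref{lem:regularity_of_p}(a)), with no decay in $M$, so the sum over $M$ of your estimate for this contribution diverges. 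The paper's proof handles exactly this by splitting $\cK^{\langle M\rangle}$ into a bulk set $E_M^1$ (points whose $\lfloor M/2\rfloor$-graph distance to every vertex of $\cV_M^{\langle M\rangle}$ exceeds $1$) and its complement $E_M^2$: on $E_M^1$ one gets $d_{\lfloor M/2\rfloor}(x,x')>2$ and hence $|x-x'|>c\,L^{M/2}$ by the comparison principle in \cite[Lemma A.2]{bib:KOP}, which feeds the sub-Gaussian estimate; on $E_M^2$ one instead uses that $\mu(E_M^2)/\mu(\cK^{\langle M\rangle})\leq k\,N^{\lfloor M/2\rfloor-M}$, which decays geometrically. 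Your proposal has no analogue of this near-vertex decomposition, and without it the argument for the preimages lying inside $\cK^{\langle M+1\rangle}$ does not close.
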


\begin{proof}

By Tonelli's theorem and \cite[formula (3.3) and Lemma 3.5]{bib:MO}, we get
\begin{align*}
C(M,t) & = \sup_{x,y \in \cK^{\langle M \rangle}} \int_0^{\infty} \sum_{\substack{y^{\prime} \in \pi_{M}^{-1}(y) \\ y' \notin \cK^{\left\langle M+1 \right\rangle}}} g(t,x,y') \eta_t({\rm d}u) \\
       & \leq c_1 \int_0^{\infty} L^{-d M} \left( \frac{L^M}{t^{1/d_w}} \vee 1\right)^{d - \frac{d_w}{d_J-1}} \exp\left(-c_2 \left( \frac{L^M}{t^{1/d_w}} \vee 1\right)^{\frac{d_w}{d_J-1}}\right) \eta_t({\rm d}u) \\
			 & \leq c_1 \int_0^{L^{d_{w}M}} L^{-d M} \left( \frac{L^M}{u^{1/d_w}}\right)^{d - \frac{d_w}{d_J-1}} \exp{\left(-c_2 \left( \frac{L^M}{u^{1/d_w}}\right)^{\frac{d_w}{d_J-1}}\right)} \eta_t({\rm d}u) \\
       & \ \ \ \ \ \ \ \ \ \ \ \ \ \ + c_3 L^{-d M} \eta_{t} \left(L^{d_{w}M}, \infty\right).
\end{align*}
Further steps of the proof follow exactly the reasoning in the proof of \cite[Lemma 2.5 (b)]{bib:KaPP2}.

The proof of (b) is similar to that of \cite[Lemma 2.7]{bib:KaPP2}, i.e.\ we start with
\begin{align*}
& \frac{1}{\mu \left(\cK^{\langle M \rangle}\right)} \int_{\cK^{\langle M \rangle}} \left| p(t,x,x) - p_M(t,x,x) \right| \mu({\rm d}x) \\
& \leq \frac{N}{\mu \left(\cK^{\langle M \rangle}\right)} \int_{\cK^{\langle M \rangle}} \sup_{\substack{x^{\prime} \in \pi_{M}^{-1}(x) \\ x' \in \cK^{\langle M+1 \rangle} \backslash \cK^{\langle M \rangle}}} p(t,x,x') \mu({\rm d}x) + \frac{1}{\mu \left(\cK^{\langle M \rangle}\right)} \int_{\cK^{\langle M \rangle}} \sum_{\substack{x^{\prime} \in \pi_{M}^{-1}(x) \\x' \notin \cK^{\langle M+1 \rangle}}} p(t,x,x') \mu({\rm d}x)\\
& =: \cA_M(t) + \cB_M(t)
\end{align*}
and first observe that $\cB_M(t)$ is a term of convergent series by part (a). To prove that the same is true for $\cA_M(t)$, we only need to modify the estimate in the proof of the quoted lemma. The difference is that the domain of the integration in $\cA_M(t)$ has to be divided into two different sets $E_M^1$ and $E_M^2$. In the present general case, we cannot use geodesic balls. We will apply the graph distance here.

Let us denote $V_M^{\langle M \rangle} = \{v_1, ..., v_k\}$ and
\begin{equation}
E_M^1 = \bigcap_{i=1}^{k} \left\{ y \in \cK^{\langle M \rangle} : d_{\lfloor M/2\rfloor} (y, v_i) > 1\right\}, \quad E_M^2 = \cK^{\langle M \rangle} \backslash E_M^1.
\end{equation}
Recall that $\sup_{(x,y) \in \cK^{\langle \infty \rangle} \times \cK^{\langle \infty \rangle}} p(t,x,y) < \infty$ by Lemma \ref{lem:regularity_of_p} (a).
Since $E_M^2$ consists of $k$ $\lfloor M/2 \rfloor$-complexes (each one attached to one of the vertices from $V_M^{\langle M \rangle}$), we have that
\begin{equation}
\frac{\mu\left(E_M^2\right)}{\mu \left(\cK^{\langle M \rangle}\right)} = \frac{k \cdot N^{\lfloor M/2 \rfloor}}{N^M}.
\end{equation}
In consequence, the part of $\cA_M(t)$ including the integral over $E_M^2$ is a term of a convergent series.

When $x \in E_M^1$ and $x^{\prime} \in \pi_{M}^{-1}(x) \backslash \cK^{\langle M \rangle}$, then $d_{\lfloor M/2 \rfloor} (x, x') >2$, so from \cite[Lemma A.2]{bib:KOP} we have $|x-x'|> c_4 L^{M/2}$. Using the subordination formula for the density $p(t,x,y)$ and the subgaussian estimates from \cite{bib:Kum} for the density $g(u,x,y)$, we then get

\begin{equation*}
\frac{1}{\mu \left(\cK^{\langle M \rangle}\right)} \int_{E_M^1} \sup_{\substack{x^{\prime} \in \pi_{M}^{-1}(x) \\ x' \in \cK^{\langle M+1 \rangle} \backslash \cK^{\langle M \rangle}}} p(t,x,x') \mu({\rm d}x) \leq c_5 \int_0^{\infty} u^{-d_s / 2} {\rm e}^{-c_6 \left(\frac{L^{M/2}}{u^{1/d_w}}\right)^{\frac{d_w}{d_J-1}}} \eta_t({\rm d}u).
\end{equation*}
Collecting these estimates, we obtain
\begin{equation*}
\cA_M(t) \leq c_7 \left(N^{-M/2}+ \int_0^{\infty} u^{-d_s / 2} {\rm e}^{-c_6 \left(\frac{L^{M/2}}{u^{1/d_w}}\right)^{\frac{d_w}{d_J-1}}} \eta_t({\rm d}u)\right),
\end{equation*}
with the constants $c_6, c_7$ independent of $M$, and from this point the proof can be continued in the same manner as that of \cite[Lemma 2.7]{bib:KaPP2}.
\end{proof}

\section{Proofs of the statements from Section  \ref{sec:existence}}

\begin{proof}[Proof of Proposition \ref{thm:convergence_with_star}]
Fix $t>0$. For $\mu$-almost all $x \in \cK^{\langle M+1\rangle},$ by Lemmas \ref{lem:bridges}(a),  \ref{lem:monoto}, and the inclusion $\pi_{M+1}^{-1}(x) \subset \pi_M^{-1}\left(\pi_M(x)\right)$, we may write
\begin{align*}
&p_{M+1}(t,x,x) \mathbf{E}_{M+1,t}^{x,x} [\mathbb{E}_{\mathbb{Q}} {\rm e}^{-\int_0^t V^{\omega}_{M+1}(X^{M+1}_s){\rm d}s} ]\\
& = \sum_{x' \in \pi_{M+1}^{-1} (x)} p(t,x,x') \mathbf{E}_{t}^{x,x'} \left[\mathbb{E}_{\mathbb{Q}} {\rm e}^{-\int_0^t V^{\omega}_{M+1}(\pi_{M+1}(X_s)){\rm d}s}  \right] \\
& \leq \sum_{x' \in \pi_{M}^{-1} \left(\pi_M(x)\right)} p(t,x,x') \mathbf{E}_{t}^{x,x'} \left[\mathbb{E}_{\mathbb{Q}} {\rm e}^{-\int_0^t V^{\omega}_{M+1}(\pi_{M+1}(X_s)){\rm d}s}  \right] \\
& \leq \sum_{x' \in \pi_{M}^{-1} \left(\pi_M(x)\right)} p(t,x,x') \mathbf{E}_{t}^{x,x'} \left[\mathbb{E}_{\mathbb{Q}} {\rm e}^{-\int_0^t V^{\omega}_{M}(\pi_{M}(X_s)){\rm d}s}  \right].
\end{align*}
Using this estimate, the definition of $\Lambda_{M^*}^{N}(t,\omega)$ and Lemma \ref{lem:bridges}(b), we may now follow the argument in the second part of the proof of \cite[Theorem 3.1]{bib:KaPP2}, getting that
$$
\mathbb{E}_{\mathbb{Q}}[\Lambda_{M+1}^{N,  V_{M+1}^\omega}(t)] \leq \mathbb{E}_{\mathbb{Q}}[ \Lambda_{M}^{N,  V_M^\omega}(t)], \quad M \in \Z.
$$
Since $\mathbb{E}_{\mathbb{Q}}\Lambda_{M}^{N,  V_M^\omega}(t) \geq 0$, it converges to a finite limit $\Lambda(t)$ as $M \to \infty$. This completes the proof.
\end{proof}

Proofs of Lemmas \ref{lem:commonlimit} and \ref{lem:variances} given below are of technical nature. They follow the steps and ideas from the proofs of Proposition 3.1 and Lemmas 3.1-3.2 in \cite{bib:KaPP2}. The main difference here is that we now work with  different type of random potentials and the state space is now a general USNF (let us emphasize that the Sierpi\'nski triangle which was studied in the quoted paper is one of the simplest planar nested fractals with high regularity). This causes some extra geometric issues, which are solved by using the graph distance (the geodesic metric may not be defined at all) and the comparison principle from \cite[Lemma A.2]{bib:KOP}. Therefore, we will follow only the main steps of the proofs which are affected by these changes, focusing on the most critical differences.

\begin{proof}[Proof of Lemma \ref{lem:commonlimit}]
We first show (a). By following the argument at the beginning of \cite[Proposition 3.1]{bib:KaPP2} we obtain that for each fixed $t>0$ there exists a constant $c=c(t)$ such that
$$
\mathbb{E}_{\mathbb{Q}} \left(\Lambda_{M}^{D,  V^\omega}(t) - \Lambda_{M}^{N,  V^\omega} (t)  \right)^2 \leq c \left( R_{1,M}(t) + R_{2,M}(t)\right), \quad M \in \mathbb{Z}_{+},
$$
where
\begin{align*}
R_{1,M}(t) &= \frac{1}{\mu \left(\cK^{\langle M \rangle}\right)} \int_{\cK^{\langle M \rangle}}  p(t,x,x) \mathbf{E}_{t}^{x,x} \left[t\geq \tau_{\cK^{\langle M\rangle}} \right] \mu({\rm d}x)\\
R_{2,M}(t) &= \frac{1}{\mu \left(\cK^{\langle M \rangle}\right)} \int_{\cK^{\langle M \rangle}} \sum_{x' \in \pi_M^{-1}(x), x' \neq x} p(t,x,x') \mu({\rm d}x) \\
           &= \frac{1}{\mu \left(\cK^{\langle M \rangle}\right)} \int_{\cK^{\langle M \rangle}} (p_M(t,x,x) - p(t,x,x)) \mu({\rm d}x).
\end{align*}
Note that the above bound does not depend on $\omega$. By Lemma \ref{lem:ceemteandtrace} (b), $R_{2,M}(t)$ is the term of a convergent series, so we only need to estimate $R_{1,M}(t)$.

Denote the vertices from $\cV_M^{\langle M \rangle}$ as $v_i$, $1\leq i\leq k$, and let $\Delta_{\left\lfloor M/2 \right\rfloor,v_i} \subset \cK^{\langle M \rangle}$, be the $\left\lfloor M/2 \right\rfloor$-complex attached to $v_i$.
If the process starts from $x \in \cD_M:=\cK^{\langle M \rangle} \backslash \bigcup_{i=1}^{k} \Delta_{\left\lfloor M/2 \right\rfloor,v_i}$, then, using \cite[Lemma A.2]{bib:KOP}, we have
\begin{equation*}
\left\{t\geq \tau_{\cK^{\langle M \rangle}}\right\} \subseteq \left\{ \sup_{0<s\leq t} d_{\left\lfloor M /2 \right\rfloor}(x,X_s) >2\right\} \subseteq \left\{ \sup_{0<s\leq t} |x-X_s| > c_{1} L^{M/2}\right\},
\end{equation*}
with a constant $c_1$, independent of $M$. We also have
\begin{equation*}
\left\{\sup_{0<s\leq t} |x-X_s| > c_{1} L^{M/2} \right\} \subseteq \left\{\sup_{0<s\leq t/2} |x-X_s| > c_{1} L^{M/2} \right\} \cup \left\{\sup_{t/2<s\leq t} |x-X_s| > c_{1} L^{M/2} \right\}.
\end{equation*}
Recall that $\mu \left( \cK^{\langle M \rangle} \backslash \cD_M \right) = k N^{\left\lfloor M/2 \right\rfloor}$. Then, by using the upper bound in Lemma \ref{lem:regularity_of_p}(a), the formula \eqref{eq:bridge} and the symmetry of the bridge measure, we get
\begin{align*}
\int_{\cK^{\langle M \rangle}}  p(t,x,x) & \mathbf{P}^{t}_{x,x} \left[t\geq \tau_{\cK^{\langle M\rangle}} \right] \mu({\rm d}x)\\
& \leq c_{2} \mu\left( \cK^{\langle M \rangle} \backslash \cD_M \right) + \int_{\cD_M} p(t,x,x) \mathbf{P}^{x,x}_{t}\left[t\geq \tau_{\cK^{\langle M \rangle}}\right] \mu({\rm d}x)\\
& \leq c_3 \left(N^{M/2} + N^M \sup_{x \in \cK^{\langle M \rangle}} \mathbf{P}^{x}\left[ \sup_{0<s\leq t/2} |x-X_s|>c_{1} L^{M/2}\right]\right).
\end{align*}
Therefore
\begin{equation*}
R_{1,M}(t) \leq c_3 \left(N^{-M/2} + \sup_{x \in \cK^{\langle M \rangle}} \mathbf{P}^{x}\left[ \sup_{0<s\leq t/2} |x-X_s|>c_{1} L^{M/2}\right]\right),
\end{equation*}
which is, by Lemma \ref{lem:supseries}, a term of a convergent series. The proof of (a) is completed.

The proof of the first convegence in (b) follows the lines of that of (a) and it is omitted. We only show the second convergence.

First observe that
$$
\left|\mathbb{E}_{\mathbb{Q}} \left( \Lambda_{M}^{D,  V^\omega} (t) - \Lambda_{M}^{D,  V_M^\omega} (t)\right) \right|
 \leq \frac{1}{\mu\left(\cK^{\langle M\rangle}\right)} \int_{\cK^{\langle M \rangle}} p(t,x,x)\mathbf{E}^{x,x}_{t} \left[ \left|F(t,M)\right| ; t<\tau_{\cK^{\langle M \rangle}}\right] \mu({\rm d}x),
$$
where
\begin{align*}
F(t,M) = \mathbb{E}_{\mathbb{Q}} \bigg( \exp \bigg( -\int_0^t \sum_{y \in \cV_{0}^{\langle \infty \rangle}} & \xi_y(\omega) W\left(X_s, y \right)  \ {\rm d}s \bigg) \\ & - \exp \bigg( -\int_0^t \sum_{y \in \cV_{0}^{\langle M\rangle}}\xi_y(\omega) \sum_{y' \in \pi_{M}^{-1}(y)} W\left(X_s, y' \right) \ {\rm d}s \bigg) \bigg) .
\end{align*}
By using exactly the same notation and argument as in the proof of part (a), we may write
\begin{align*}
\frac{1}{\mu\left(\cK^{\langle M\rangle}\right)} & \int_{\cK^{\langle M \rangle}} p(t,x,x) \mathbf{E}^{x,x}_{t} \left[ \left|F(t,M)\right| ; t<\tau_{\cK^{\langle M \rangle}}\right] \mu({\rm d}x) \\
& \leq \frac{c_4 \mu \left(\cK^{\langle M\rangle} \backslash \cD_M \right)}{\mu\left(\cK^{\langle M\rangle}\right)}
+ \frac{1}{\mu\left(\cK^{\langle M\rangle}\right)} \int_{\cD_M} p(t,x,x)\mathbf{E}^{x,x}_{t} \left[ \left|F(t,M)\right| ; t<\tau_{\cK^{\langle M \rangle}}\right] \mu({\rm d}x) \\
& = c_4 \frac{k N^{\left\lfloor M/2\right\rfloor}} {N^M} + \frac{1}{\mu\left(\cK^{\langle M\rangle}\right)} \int_{\cD_M} p(t,x,x)\mathbf{E}^{x,x}_{t} \left[ \left|F(t,M)\right| ; t<\tau_{\cK^{\langle M \rangle}}\right] \mu({\rm d}x).
\end{align*}
We see that the first term converges to 0 as $M \to \infty$. It is sufficient to show that the second term goes to zero as well; denote it by $I(t,M)$. We have
\begin{align*}
I(t,M) & \leq \frac{1}{\mu\left(\cK^{\langle M\rangle}\right)} \int_{\cD_M} p(t,x,x)\mathbf{E}^{x,x}_{t} \left[ \left|F(t,M)\right| ; t<\tau_{\cD_M }\right] \mu({\rm d}x) \\
& \ \ \ \ \ \ \ \ \ \ \ \ \ \  + \frac{1}{\mu\left(\cK^{\langle M\rangle}\right)} \int_{\cD_M} p(t,x,x)\mathbf{P}^{x,x}_{t} \left[ t\geq \tau_{\cD_M } \right] \mu({\rm d}x) \\
& =: I_1(t,M) + I_2(t,M).
\end{align*}
To estimate $I_1(t,M)$ we will use the inequality $|{\rm e}^{-x}-{\rm e}^{-y}| \leq |x-y|, x,y>0$, the fact that $W$ and $\xi_y$ are nonnegative.
\begin{align*}
&\left|F(t,M)\right| \\
& \leq \mathbb{E}_{\mathbb{Q}} \left| \exp \left( -\int_0^t \sum_{y \in \cV_{0}^{\langle \infty \rangle}} \xi_y(\omega) W\left(X_s, y \right)  \ {\rm d}s \right) - \exp \left( -\int_0^t \sum_{y \in \cV_{0}^{\langle M\rangle}}\xi_y(\omega) \sum_{y' \in \pi_{M}^{-1}(y)} W\left(X_s, y' \right) \ {\rm d}s \right)\right| \\
& \leq \mathbb{E}_{\mathbb{Q}} \left| \int_0^t \sum_{y \in \cV_{0}^{\langle \infty \rangle}} \xi_y(\omega) W\left(X_s, y \right)  \ {\rm d}s - \int_0^t \sum_{y \in \cV_{0}^{\langle M\rangle}}\xi_y(\omega) \sum_{y' \in \pi_{M}^{-1}(y)} W\left(X_s, y' \right) \ {\rm d}s \right|.
\end{align*}
Now, observe that for every $x \in \cK^{\langle \infty \rangle}$ we have
$$
\sum_{y \in \cV_{0}^{\langle \infty \rangle}} \xi_y(\omega) W\left(x, y \right)
= \sum_{y \in \cV_{0}^{\langle M \rangle}} \xi_y(\omega) W\left(x, y \right) + \sum_{y \in \cV_{0}^{\langle \infty \rangle} \setminus \cV_{0}^{\langle M \rangle}} \xi_y(\omega) W\left(x, y \right)
$$
and
$$
\sum_{y \in \cV_{0}^{\langle M\rangle}} \sum_{y' \in \pi_{M}^{-1}(y)} \xi_{y'}(\omega) W\left(x, y' \right)
= \sum_{y \in \cV_{0}^{\langle M\rangle}} \xi_{y}(\omega) W\left(x, y \right) + \sum_{y \in \cV_{0}^{\langle M\rangle}} \sum_{y' \in \pi_{M}^{-1}(y) \backslash \{y\}} \xi_{y'}(\omega) W\left(x, y' \right).
$$
By this observation, the Fubini theorem and the fact that all  the lattice random variables together with the profile function $W$ are nonnegative, we get that the above expectation can be estimated above by
\begin{align*}
\mathbb{E}_{\mathbb{Q}}\int_0^t \left(\sum_{y \in \cV_{0}^{\langle \infty \rangle} \setminus \cV_{0}^{\langle M \rangle}} \xi_y(\omega) W\left(X_s, y \right)
           \right. & \left.+ \sum_{y \in \cV_{0}^{\langle M\rangle}} \sum_{y' \in \pi_{M}^{-1}(y) \backslash \{y\}} \xi_{y'}(\omega) W\left(X_s, y' \right)\right){\rm d}s \\
& \leq
2 \mathbb{E_Q}\xi \int_{0}^{t} \sum_{y \in \cV_{0}^{\langle \infty \rangle} \backslash  \cV_{0}^{\langle M \rangle}} W\left(X_s, y \right) {\rm d}s.
\end{align*}
As $X_s \in \cD_M$ for $0\leq s \leq t$ and $y \in \cV_{0}^{\langle \infty \rangle} \backslash  \cV_{0}^{\langle M \rangle}$, we have $d_{\left\lfloor M/2 \right\rfloor}(X_s, y)>2$ for $0\leq s\leq t$. This gives
\begin{align*}
I_1(t,M) &
\leq \frac{2\kappa}{\mu\left(\cK^{\langle M\rangle}\right)} \int_{\cD_M} p(t,x,x)
\mathbf{E}^{x,x}_{t} \left[ \int_0^t \sum_{y \in \cV_{0}^{\langle \infty \rangle} \backslash  \cV_{0}^{\langle M \rangle}} W(X_s(\omega), y) {\rm d}s ; t<\tau_{\cD_M }\right] \mu({\rm d}x)\\
& \leq c_5 \sup_{z \in \cK^{\langle \infty \rangle}} \sum_{y \in \cV_0^{\langle \infty \rangle} \backslash B_{\lfloor M/2 \rfloor}(z,1)}  W(z,y),
\end{align*}
with a constant $c_5>0$ independent of $M$. By \textbf{(W3)} this is a term of a convergent series.

To show that $I_2(t,M)$ is a term of a convergent series, we just follow the steps from the proof of (a) for an appropriate set $\cD^{'}_M \subset \cD_M$. The proof of (b) is finished.
\end{proof}

\begin{proof}[Proof of Lemma \ref{lem:variances}] The proof of this lemma follows the main steps from the proof of \cite[Lemma 3.2]{bib:KaPP2}, but it substantially differs in some details which are critical for the argument. We will focus on explanation of these differences.

First note that due to Lemma \ref{lem:commonlimit}(a), similarly as in the proof of \cite[Lemma 3.2]{bib:KaPP2}, we only need to establish \eqref{eq:dvarlemma}.
We consider a family of measures $(\nu_M)_{M \in \Z}$ given by
\begin{equation}
\nu_M := \left( \frac{1}{\mu\left(\cK^{\langle M\rangle}\right)} \int_{\cK^{\langle M\rangle}} p(t,x,x) \mathbf{P}_{t}^{x,x} \mu(dx) \right)^{\otimes 2} \otimes \mathbb{Q}^{\otimes 3}, \quad M \in \mathbb{Z}_{+},
\end{equation}
defined with the product space $\widetilde{\Omega} = D([0,t],\cK^{\langle \infty \rangle})^2 \times \Omega^3$, and nondecreasing sequence of positive integers $\left(a_M\right)_{M \in \mathbb{Z}_{+}}$ such that $a_M \leq M$, $M \in \mathbb{Z}_{+}$. Its values will be chosen later in the proof.

For $m \in \mathbb{Z}_{+}$ we set
\begin{equation}
V^{\omega,m}(x):= \sum_{y \in \cV_{0}^{\langle \infty \rangle} \cap B_m(x,1)} \xi_{y}(\omega) W(x,y)
\end{equation}
and
\begin{equation}
\widetilde{V}^{\omega,m}(x):= \sum_{y \in \cV_{0}^{\langle \infty \rangle} \setminus B_m(x,1)} \xi_{y}(\omega) W(x,y) ,
\end{equation}
where the ball $B_m(x,1)$ is taken in the $m$-graph metric, i.e.\ for $x \in \cK^{\langle \infty \rangle} \backslash \mathcal V^{\langle \infty \rangle}_m$ it is equal to $\Delta_m(x)$ -- the only $m$-complex containing $x$; for $x \in \mathcal V^{\langle \infty \rangle}_m$ it is a sum of $m$-complexes attached to $x$ (there are $\textrm{rank}(x) \in \{1,2,3\}$ of them).
We also denote for $M \in \mathbb{Z}_{+}$
\begin{equation*}
F_M(w,\omega):= {\rm e}^{-\int_0^t V^{\omega,a_M}(X_s(w)) {\rm d}s} \quad \text{and} \quad
\widetilde{F}_M(w,\omega):= {\rm e}^{-\int_0^t \widetilde{V}^{\omega,a_M}(X_s(w)) {\rm d}s}.
\end{equation*}
With this notation we have
\begin{align*}
\mathbb{E}_{\mathbb{Q}} \left[ \Lambda_M^D - \mathbb{E}_{\mathbb{Q}} \Lambda_M^D  \right]^2
& = \int_{\widetilde{\Omega}} \prod_{i=1}^{2} \left(F_M(w_i, \omega_0) \widetilde{F}_M(w_i, \omega_0) - F_M(w_i, \omega_i) \widetilde{F}_M(w_i, \omega_i) \right) \\
& \ \ \ \ \ \ \ \ \ \ \ \ \ \ \times \mathbf{1}_{\{t<\tau_{\cK^{\langle M \rangle}}(w_i)\}} \cdot {\rm d}\nu_M (w_1,w_2,\omega_0,\omega_1,\omega_2) \\
& =: \int_{\widetilde{\Omega}} \cX(w_1,w_2,\omega_0,\omega_1,\omega_2) {\rm d}\nu_M (w_1,w_2,\omega_0,\omega_1,\omega_2).
\end{align*}
We now consider the partition of $\widetilde{\Omega}$ into three disjoint sets:
\begin{align*}
D_0^M & := \left\{(w_1,w_2) \in D([0,t],\cK^{\langle \infty\rangle})^2 : \textrm{for every } s_1,s_2 \in [0,t] \  d_{a_M} \left(X_{s_1}(w_1),X_{s_2}(w_2)\right) >2  \right\} \times \Omega^3,\\
D_1^M & := \bigg\{(w_1,w_2) \in D([0,t],\cK^{\langle \infty\rangle})^2 : d_{a_M+3}\left(X_{0}(w_1),X_{0}(w_2)\right) >2 \textrm{ and there exist } s_1,s_2 \in (0,t] \\
      & \ \ \ \ \ \textrm{ such that } d_{a_M}\left(X_{s_1}(w_1),X_{s_2}(w_2)\right) \leq 2  \bigg\} \times \Omega^3,\\
D_2^M & := \widetilde{\Omega} \backslash \left(D_0^M \cup D_1^M\right).
\end{align*}
We will integrate $\cX$ over each of these sets separately.
Let us point out that all these sets are now defined with the $m$-graph metric $d_m(x,y)$. We also use this opportunity to correct the definition of the sets
$D_0^M$ and $D_1^M$ on p.\ 1272 in \cite{bib:KaPP2}: they also should be defined with $s_1,s_2 \in [0,t]$ as above instead of single $s \in [0,t]$. Also, $a_M$ should be used in the definition of $D_0^M$ instead of $c_M$.

By following the argument in the proof of the quoted lemma, we get
\begin{multline*}
\prod_{i=1}^{2} \left(F_M(w_i,\omega_0) \widetilde{F}_M(w_i, \omega_0) - F_M(w_i,\omega_i) \widetilde{F}_M(w_i, \omega_i) \right)\\
\leq \prod_{i=1}^{2} \left(F_M(w_i,\omega_0) - F_M(w_i,\omega_i) \right) \\
+ 2 - \left(\widetilde{F}_M(w_1, \omega_0) \widetilde{F}_M(w_2, \omega_2) + \widetilde{F}_M(w_2, \omega_0) \widetilde{F}_M(w_1, \omega_1) \right).
\end{multline*}
For a given $M \in \mathbb{Z}_{+}$ and a path $X_s(w)$, the functional $F_M(w,\cdot)$ depends only on those fractal lattice points from $V_{0}^{\langle \infty \rangle}$ which are in the set $X_{[0,t]}^{a_M}(w):= \bigcup_{s\in [0,t]} B_{a_M}(X_s(w),1)$. From the definition of $D_0^M$ we see that on this set we have $X_{[0,t]}^{a_M}(w_1) \cap X_{[0,t]}^{a_M}(w_2) = \emptyset$ and therefore, the random variables $F_M(w_1,\omega_0) - F_M(w_1,\omega_1)$ and $F_M(w_2,\omega_0) - F_M(w_2,\omega_2)$ are $\mathbb{Q}^{\otimes 3}$-independent. In consequence,
\begin{equation*}
\int_{D_0^M} \prod_{i=1}^{2} \left(F_M(w_i,\omega_0) - F_M(w_i,\omega_i) \right) \mathbf{1}_{\{t<\tau_{\cK^{\langle M \rangle}}(w_i)\}} {\rm d}\nu_M (w_1,w_2,\omega_0,\omega_1,\omega_2) =0
\end{equation*}
and, by following the argument in the proof of the cited lemma, including Jensen's inequality and the assumption that all lattice random variables are nonnegative and integrable, we obtain
\begin{equation}
\label{eq:intd0m}
\int_{D_0^M} \cX \ {\rm d}\nu_M \leq c t \, \mathbb{E_Q}\xi \sup_{x \in \cK^{\langle \infty \rangle}} \sum_{y \notin B_{a_M}(x,1)} W(x,y),
\end{equation}
for a constant $c>0$, independent of $M$.

On the set $D_1^M$ we have
\begin{equation}
\label{eq:d1mcond}
\sup_{s \in (0,t]} d_{a_M} \left(X_0(w_i), X_s(w_i) \right) >2, \quad \textrm{for }i=1 \textrm{ or } i=2.
\end{equation}
This can be seen as follows (see Figure \ref{fig:pentvar} for illustration). From the definition of $D_1^M$ we have that $d_{a_M+3} \left(X_0(w_1), X_0(w_2) \right) >2$, what means that $X_0(w_1)$ and $X_0(w_2)$ are in separate $(a_M+3)$-complexes (light grey complexes in the figure). If, on the contrary to \eqref{eq:d1mcond}, for both $i \in \{1,2\}$ and all $s \in (0,t]$ were $d_{a_M} \left(X_0(w_i), X_s(w_i) \right) \leq 2$, then for both $i=1,2$ the entire path $X_{s}(w_i)$ would be bounded inside $B_{a_M}(X_0(w_i),2)$ (dark grey complexes in the figure). That would mean that these two paths of the process up to time $t$  are not closer to each other than in two different $a_M$-complexes attached to the vertices of a common $(a_M+3)$-complex (white in the figure). In fact, they would be in separate $(a_M+1)$-complexes inside two different $(a_M+2)$-complexes. This would mean that $d_{a_M} \left(X_{s_1}(w_1), X_{s_2}(w_2) \right) \geq 6$ for all $s_1, s_2 \in (0,t]$, because the path  realizing  the graph distance must pass through another $a_M$-complex inside $(a_M+1)$-complex containing $X_{s_1}(w_1)$, then through another $(a_M+1)$-complex inside $(a_M+2)$-complex containing $X_{s_1}(w_1)$, so at least two more $a_M$-complexes. By symmetry, it must go next through at least three $a_M$-complexes inside $(a_M+2)$-complex containing $X_{s_2}(w_2)$.

\begin{figure}[ht]
\centering
\includegraphics[scale=0.1]{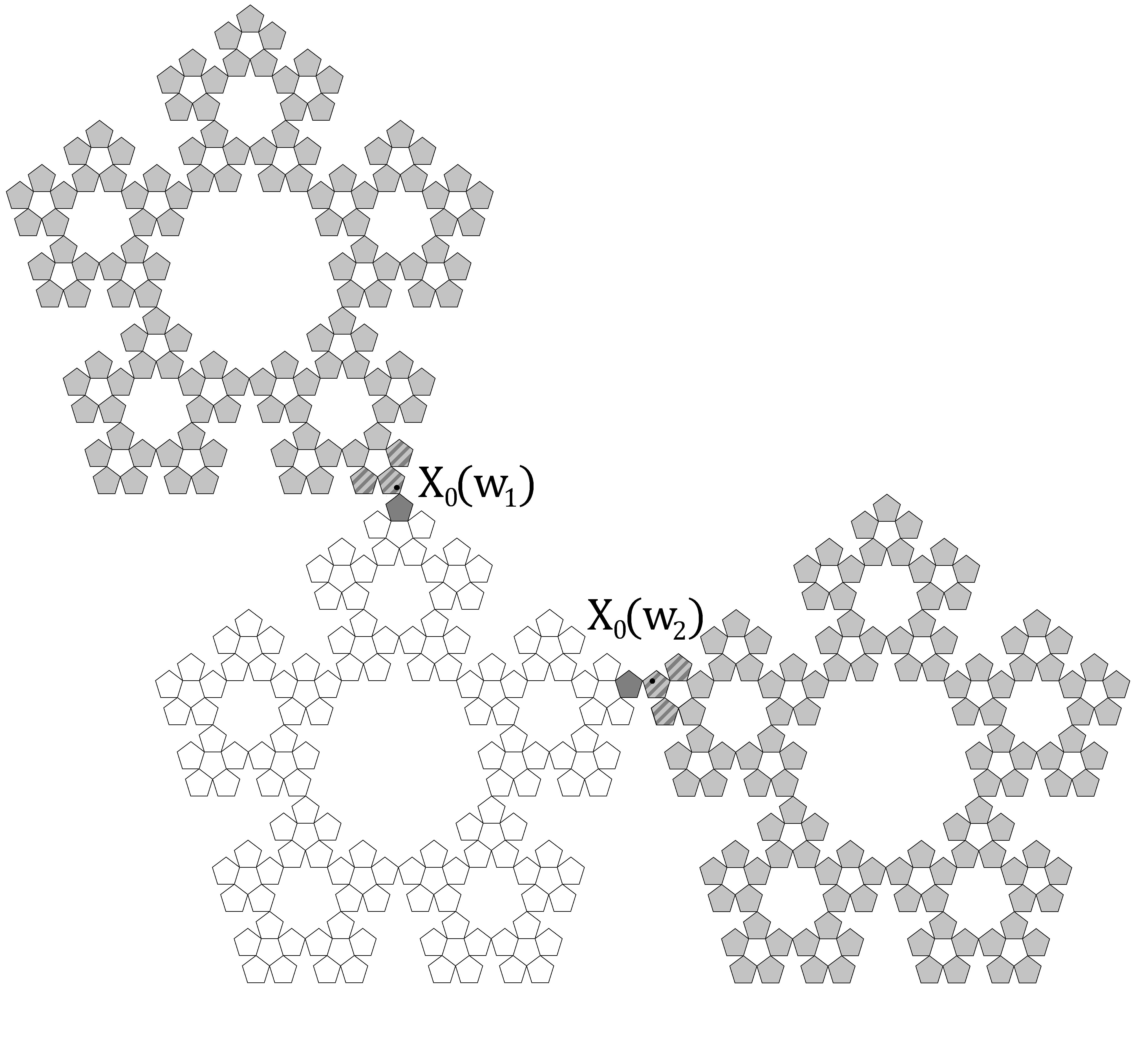}
\caption{$X_0(w_i)$ are in separable $(a_M+3)$-complexes (light grey). If \eqref{eq:d1mcond} does not hold, then the entire paths $X_s(w_i)$, $s \in [0,t]$, must be restricted to the respective dark grey $a_M$-complexes, contradicting the definition of $D_1^M$.}
\label{fig:pentvar}
\end{figure}

By \cite[Lemma A.2]{bib:KOP} this implies that
\begin{equation*}
\sup_{s \in (0,t]} |X_0(w_i), X_s(w_i)| \geq c_1 L^{a_M }, \quad \textrm{for }i=1 \textrm{ or } i=2,
\end{equation*}
with a constant $c_1$ independent of $M$,
and since $0\leq F_M(w_i,\omega_k) \leq 1$, $i=1,2$, $k=0,1,2$, the integral over $D_1^M$ can be estimated by
\begin{equation}
\frac{1}{\mu\left(\cK^{\langle M\rangle}\right)} \int_{\cK^{\langle M\rangle}} p(t,x,x) \mathbf{P}_{t}^{x,x} \left[ \sup_{s \in (0,t]} |X_0(w_i), X_s(w_i)| \geq c_1 L^{a_M } \right] \mu({\rm d}x).
\end{equation}
Using the same argument as in the same step of the proof of the cited lemma, we then get that the above expression is less than or equal to
\begin{multline}
\label{eq:intd1m}
c_2 \sup_{x \in \cK^{\langle \infty \rangle}} \mathbf{P}^x \left[ \sup_{s \in (0,t/2]} |X_0(w_i), X_s(w_i)| \geq c_1 L^{a_M } \right],
\end{multline}
with some $c_2$, independent of $M$.

Since the integrand $\cX$ is not bigger than 1, it is enough to estimate the measure of $D_2^M$.
We have
\begin{equation}
\label{eq:intd2m}
\nu_M(D_2^M)  \leq \frac{ c_3 \mu\{(x,y) \in \cK^{\langle M \rangle} \times \cK^{\langle M \rangle} : d_{a_M+3}\left(x,y\right) \leq 2 \}}{\left(\mu(\cK^{\langle M \rangle})\right)^2} \leq \frac{ 4 c_3 N^{a_M+3}}{N^{M}},
\end{equation}
with some $c_3$ independent of $M$.

We may choose $a_M=\lfloor M/4 \rfloor$. Then \eqref{eq:intd0m} is a term of a convergent series by \textbf{(W3)} and \eqref{eq:intd1m} is a term of convergent series by Lemma \ref{lem:supseries}.
\end{proof}


\begin{thebibliography}{00}

\bibitem{bib:ACDRT}
E. Akkermans, J.\,P. Chen, G. Dunne, L.\,G Rogers, A. Teplyaev: {\em Fractal AC circuits and propagating waves on fractals.} Analysis, probability and mathematical physics on fractals, 557--567, Fractals Dyn. Math. Sci. Arts Theory Appl., 5, World Sci. Publ., Hackensack, NJ, [2020], 2020.

\bibitem{bib:ARBCRST1}
P.  Alonso-Ruiz, F. Baudoin, L. Chen, L.\,G. Rogers,  N. Shanmugalingam, A. Teplyaev: {\em Besov class via heat semigroup on Dirichlet spaces I: Sobolev type inequalities.} J. Funct. Anal. 278 (2020), no. 11, 108459, 48 pp.


\bibitem{bib:ARBCRST2}
P.  Alonso-Ruiz, F. Baudoin, L. Chen, L.\,G. Rogers,  N. Shanmugalingam, A. Teplyaev: {\em Besov class via heat semigroup on Dirichlet spaces II: BV functions and Gaussian heat kernel estimates.} Calc. Var. Partial Differential Equations 59 (2020), no. 3, Paper No.103, 32 pp.

\bibitem{bib:ARBCRST3}
P.  Alonso-Ruiz, F. Baudoin, L. Chen, L.\,G. Rogers,  N. Shanmugalingam, A. Teplyaev: {\em Besov class via heat semigroup on Dirichlet spaces III: BV functions and sub-Gaussian heat kernel estimates.}
 Calc. Var. Partial Differential Equations 60 (2021), no. 5, Paper No. 170, 38 pp.

\bibitem{bib:ARBCRST4}
P.  Alonso-Ruiz, F. Baudoin, L. Chen, L.\,G. Rogers,  N. Shanmugalingam, A. Teplyaev:
{\em BV functions and fractional Laplacians on Dirichlet spaces}, arXiv:1910.13330.

\bibitem{bib:HB-KK-MO-KPP1} H. Balsam, K. Kaleta, M. Olszewski, K. Pietruska-Pa\l uba, {\em IDS for subordinate Brownian motions in Poisson random environment  on nested fractals:\ existence and Lifschitz tail,} preprint 2022.

\bibitem{bib:BP} M. Benderskii, L. Pastur: {\em On the spectrum of the one-dimensional Schr\"odinger equation with random potential}, Mat. Sb. 82 (1970) 245-256.

\bibitem{bib:Bou-Ken} J. Bourgain, C.\,E. Kenig: {\em On localization in the continuous Anderson-Bernoulli
model in higher dimension} Invent. Math. 161 (2005), 389-426.

\bibitem{bib:Car-Lac}  R. Carmona, J. Lacroix, {\em Spectral theory of random Schr\"{o}dinger operators}. Probability and its Applications. Birkh\"auser Boston, Inc., Boston, MA, 1990.

\bibitem{bib:CU} L. Chaumont, G. Uribe Bravo, {\em Markovian bridges: weak continuity and pathwise constructions}, Ann. Probab., 39 (2) (2011), pp.
    609-647.

\bibitem{bib:CHT} J.\,P.
 Chen, M. Hinz, A. Teplyaev: {\em From non-symmetric particle systems to non-linear PDEs on fractals.} Stochastic partial differential equations and related fields, 503--513, Springer Proc. Math. Stat., 229, Springer, Cham, 2018

\bibitem{bib:Chen-Song}
Chen Z.-Q., Song R, {\em Two sided eigenvalue estimates for subordinate processes in domains,} J.
Funct. Anal. 226, 2005, 90--113.

\bibitem{bib:CZ}
K.L. Chung, Z. Zhao: From Brownian Motion to Schr\"odinger’s Equation, Springer, New York, 1995.

\bibitem{bib:Com-His} J.\,M. Combes, P.\,D. Hislop:  {\em Localization for some continuous, random Hamiltonians
in d-dimensions.}  J. Funct. Anal. 124, 149-180 (1994).


\bibitem{bib:DRT}
A. Dekkers, A. Rozanova-Pierrat, A. Teplyaev: {\em Mixed boundary valued problems for linear and nonlinear wave equations in domains with fractal boundaries.} Calc. Var. Partial Differential Equations 61 (2022), no. 2, Paper No. 75, 44 pp.

\bibitem{bib:DC}
M. Demuth, J.A. van Casteren: Stochastic Spectral Theory for Self-adjoint Feller Operators. A Functional
Analysis Approach. Birkhäuser, Basel 2000.


\bibitem{bib:FL} R. Friedberg, J. Luttinger: {\em Density of electronic energy levels in disordered systems},
Phys. Rev. B 12 (1975) 4460-4474.

\bibitem{bib:F}
M. Fukushima:
\emph{On the spectral distribution of a disordered system and a range of a random walk},
Osaka J. Math. 11, 1974, 73-85.

\bibitem{bib:Fuk2}
M. Fukushima:
\emph{Dirichlet forms, diffusion processes, and spectral dimensions for nested fractals.} In: \emph{Ideas and methods in stochastic analysis, stochastics and applications}, 151-161. Cambridge Univ. Press., Cambridge, 1992.

\bibitem{bib:FHK}
P. J. Fitzsimmons, B. M. Hambly, T. Kumagai:
\emph{Transition density estimates for Brownian motion on affine nested fractals}, Comm. Math. Phys. 165 (1994), no. 3, 595--620.


\bibitem{bib:F}
M. Fukushima:
\emph{On the spectral distribution of a disordered system and a range of a random walk},
Osaka J. Math. 11, 1974, 73-85.

\bibitem{bib:Fuk1} M. Fukushima, {\em Dirichlet forms, diffusion processes, and spectral dimensions
for nested fractals}. In: {\em Ideas and methods in stochastic analysis, stochastics and applications}, 151--161. Cambridge Univ. Press., Cambridge, 1992.

\bibitem{bib:FNN} M. Fukushima, H. Nagai, and S. Nakao: {\em On an asymptotic property of spectra of a random
difference operator}, Proc. Japan Acad. 51 (1975) 100-102.


\bibitem{bib:GRM} M. Gebert, C. Rojas-Molina:
{\em Lifshitz tails for the fractional Anderson model},
J. Stat. Phys. 179 (2020) 341-353.

\bibitem{bib:Ger-His-Kle}  F. Germinet, P.\,D. Hislop, A.  Klein:  {\em Localization for Schr\"odinger operators with Poisson random potential.} J. Eur. Math. Soc. (JEMS) 9 (2007), no. 3, 577-607.

\bibitem{bib:HRT} M. Hinz, A. Rozanova-Pierrat, A. Teplyaev: {\em Non-Lipschitz uniform domain shape optimization in linear acoustics.}  SIAM J. Control Optim. 59 (2021), no. 2, 1007-1032.

\bibitem{bib:KN}
T. Kumagai,C. Nakamura:  {\em Lamplighter random walks on fractals.} J. Theoret. Probab. 31 (2018), no. 1, 68--92.

\bibitem{bib:HT} M.
 Hinz, A. Teplyaev: {\em Dirac and magnetic Schr\"odinger operators on fractals.} J. Funct. Anal. 265 (2013), no. 11, 2830-2854.

\bibitem{bib:KOP}
K. Kaleta, M. Olszewski, K. Pietruska-Pa\l uba:
\emph{Reflected Brownian motion on simple nested fractals}, Fractals 27 (6), 2019, 1950104 1-29

\bibitem{bib:KaPP2}
K. Kaleta, K. Pietruska-Pa\l uba:
\emph{Integrated density of states for Poisson-Schr\"odinger perturbations of subordinate Brownian motions on the Sierpi\'nski gasket}, Stochastic Process. Appl. 125 (4), 2015, 1244-1281.

\bibitem{bib:KaPP}
K. Kaleta, K. Pietruska-Pa\l uba:
\emph{Lifschitz singularity for subordinate Brownian motions in presence of the Poissonian potential on the Sierpi\'nski triangle}, Stochastic Process. Appl. 128 (11), 2018, 3897-3939.

\bibitem{bib:KaPP4}
K. Kaleta, K. Pietruska-Pa\l uba:
\emph{Lifschitz tail for alloy-type models driven by the fractional Laplacian},
Journal of Functional Analysis 279 (5), 2020, 108575.

\bibitem{bib:KaPP3}
K. Kaleta, K. Pietruska-Pa\l uba:
\emph{Lifschitz tail for continuous Anderson models driven by L\'evy operators},
Communications in Contemporary Mathematics 23 (6), 2021, 2050065.

\bibitem{bib:Kel-Ste-Won} D. Kelleher, B. Steinhurst and C-M.M. Wong,
{\em From Self-Similar Structures to Self-Similar Groups,}
Internat. J. Algebra Comput. (IJAC) 22:7 (2012), 1250056.

\bibitem{bib:Kig4}
J. Kigami: {\em Time changes of the Brownian motion: Poincar\'{e} inequality, heat kernel estimate and protodistance.} Mem. Amer. Math. Soc. 259 (2019), no. 1250, v+118 pp.

\bibitem{bib:KM1} W. Kirsch, F. Martinelli: {\em On the density of states of Schr\"odinger operators with a random potential}, J. Phys. A 15 (1982) 2139-2156.


\bibitem{bib:KM2} W. Kirsch, F. Martinelli: {\em Large deviations and Lifshitz singularity of the integrated
density of states of random Hamiltonians}, Commun. Math. Phys. 89 (1983) 27-40.


\bibitem{bib:KS} W. Kirsch, B. Simon: \emph{Lifshitz Tails for Periodic Plus Random Potentials}, J. Stat. Phys. 42:5/6 (1986) 799-808.

\bibitem{bib:KV} W. Kirsch, I. Veseli\'c: {\em Lifshitz Tails for a Class of Schr\"odinger Operators with Random Breather-Type Potential},
Lett. Math. Phys. 94 (2010) 27-39.

\bibitem{bib:Klopp1} F. Klopp: {\em Weak Disorder Localization and Lifshitz Tails}, Commun. Math. Phys. 232 (2002) 125-155.

\bibitem{bib:Klopp2} F. Klopp: {\em Weak Disorder Localization and Lifshitz Tails: Continuous Hamiltonians}, Ann. Henri Poincar\'e 3 (2002) 711-737.

\bibitem{bib:Kum}  T. Kumagai, {\em Estimates of transition densities for Brownian motion on nested fractals}, Probab. Theory Related Fields 96 (1993), no. 2, 205--224.

\bibitem{bib:Kum7}
T. Kumagai: {\em Anomalous random walks and diffusions: from fractals to random media.} Proceedings of the International Congress of Mathematicians, Seoul 2014. Vol. IV, 75--94, Kyung Moon Sa, Seoul, 2014.

\bibitem{bib:Kus2} S. Kusuoka, {\em Dirichlet forms on fractals and products of random matrices.}
Publ. RIMS Kyoto Univ., 25, 659--680 (1989).

\bibitem{bib:Lin}  T. Lindstrom, {\em  Brownian motion on nested fractals},  Mem. Amer. Math. Soc. 83 (1990), no. 420, iv+128 pp.


\bibitem{bib:Lut} J.M. Luttinger: {\em New variational method with applications to disordered systems}, Phys.
Rev. Lett. 37 (1976) 609-612.


\bibitem{bib:Mez} G. Mezincescu: {\em Bounds on the integrated density of electronic states for disordered Hamiltonians}, Phys. Rev. B 32 (1985) 6272-6277.


\bibitem{bib:Nag} H. Nagai: {\em On an exponential character of the spectral distribution function of a random difference operator}, Osaka J. Math. 14 (1977) 111-116.


\bibitem{bib:Nak} S. Nakao: {\em On the spectral distribution of the Schr\"{o}dinger operator with random potential}, Japan J. Math. Vol. 3 (1977) 111-139.

\bibitem{bib:Nek-Tep}
V. Nekrashevych and A. Teplyaev,
{\em Groups and analysis on fractals,} Analysis on Graphs and its Applications,
Proceedings of Symposia in Pure Mathematics, Amer. Math. Soc., 77 (2008), 143-180.

\bibitem{bib:NO} M. Nieradko, M. Olszewski, {\em Good labeling property of simple nested fractals}, preprint, 2021, available at arXiv:2110.15921

\bibitem{bib:Okura} H. Okura, {\em On the spectral distributions of certain integro-differential operators with random potential}, Osaka J. Math. 16 (3) (1979) 633-666.

\bibitem{bib:MO} M. Olszewski, {\em Estimates for the transition densities of the reflected Brownian motion on simple nested fractals}, Prob. Math. Statist., 39, (2019), no. 2, 423-440.


\bibitem{bib:Pas} L.A. Pastur: {\em The behavior of certain Wiener integrals as $t\to\infty$ and the density of states of Schr\"{o}dinger equations with random potential}, (Russian) Teoret. Mat. Fiz. 32 (1977) 88-95.


\bibitem{bib:KPP-PTRF}
K. Pietruska-Pa{\l}uba, {\em The Lifschitz singularity for the density of states on the Sierpinski gasket},  Probab. Theory Related Fields 89 (1991), no. 1, 1-33.

\bibitem{bib:kpp-sausage}
K. Pietruska-Pa\l uba, {\em The Wiener Sausage Asymptotics on Simple Nested
Fractals}, Stochastic Analysis and Applications, 23:1 (2005), 111-135.

\bibitem{bib:kpp-sto} K. Pietruska-Pa\l uba, A. Stos, {\em Poincar\'{e} inequality and Hajlasz-Sobolev spaces on nested fractals}, Studia Math. 218(1) (2013), 1--28.


\bibitem{bib:RW} M. Romerio, W. Wreszinski: {\em On the Lifshitz singularity and the tailing in the density
of states for random lattice systems}, J. Stat. Phys. 21 (1979) 169-179.


\bibitem{bib:RS}
M. Reed, B. Simon:
\emph{Methods on Modern Mathematical Physics. Vol. 4: Analysis of Operators}, Academic Press, 1978.

\bibitem{bib:SSV} R. Schilling, R. Song, Z. Vondra\v{c}ek, {\em Bernstein functions}, Walter de Gruyter, 2010.

\bibitem{bib:Sh}
T. Shima, {\em Lifschitz tails for random Schr\"odinger operators on nested fractals}, Osaka J. Math. {\bf 29} (1992), 749--770.


\bibitem{bib:Sim}
B. Simon:
\emph{Lifshitz tails for the Anderson model},
J. Stat. Phys. 38 (1985) 65-76.

\bibitem{bib:Stoll} P. Stollmann, {\em Caught by Disorder: Bound States in Random Media}, Birkh\"auser Boston, 2001.

\bibitem{bib:Szn2} A.S. Sznitman, \emph{ Lifschitz tail and Wiener sausage on hyperbolic space}, Comm. Pure Appl. Math. {\bf 42} (1989), 1033--1065.

\bibitem{bib:Szn3} A.S. Sznitman, \emph{ Lifschitz tail on hyperbolic space: Neumann conditions}, Comm. Pure Appl. Math. {\bf 43} (1990), 1-30.

\end{thebibliography}
\end{document}